\documentclass[11pt, reqno, dvipsnames]{amsart}
\usepackage[utf8]{inputenc}
\usepackage[english]{babel}
\usepackage{color}
\usepackage{amsmath,amsthm,amssymb,amsfonts}
\usepackage{mathrsfs}
\usepackage{tikz-cd}
\usepackage[linktocpage=true]{hyperref}
\usepackage{enumerate}
\usepackage{geometry}

\hypersetup{
	colorlinks = true,
	linkcolor = {ForestGreen},
	citecolor = {RoyalBlue},
	urlcolor = {Black}
}

\newcommand{\Z}{{\mathbb Z}}
\newcommand{\Q}{{\mathbb Q}}
\newcommand{\R}{{\mathbb R}}

\newcommand{\N}{{\mathbb N}}
\newcommand{\F}{{\mathbb F}}

\newcommand{\f}{{\mathcal F}}

\newcommand{\im}{\operatorname{Im}}

\newcommand{\Spa}{\operatorname{Spa}}

\newcommand{\rg}{\operatorname{R\Gamma}}

\newcommand{\colim}{\operatorname{colim}}

\newcommand{\cond}{\operatorname{cond}}

\newcommand{\et}{\operatorname{\acute{e}t}}
\newcommand{\ket}{\operatorname{k\acute{e}t}}
\newcommand{\proet}{\operatorname{pro\acute{e}t}}
\newcommand{\proket}{\operatorname{prok\acute{e}t}}

\newcommand{\red}{\textcolor{red}}

\newcommand{\cts}{\operatorname{cts}}

\numberwithin{equation}{section}
\newtheorem{theorem}{Theorem}
\numberwithin{theorem}{section}

\newtheorem{thm}[theorem]{Theorem}
\newtheorem{lem}[theorem]{Lemma}
\newtheorem{cor}[theorem]{Corollary}
\newtheorem{prop}[theorem]{Proposition}

\theoremstyle{definition}
\newtheorem{defn}[theorem]{Definition}
\newtheorem{rem}[theorem]{Remark}

\newif\ifshowhidden
\showhiddenfalse

\AtBeginDocument{%
	\def\MR#1{}
}

\numberwithin{equation}{section}
\begin{document}
	
	\title{Cohomology of integral structure sheaves for open varieties}

	\author{Xinyu Shao}

	\address{Institut für Algebra, Zahlentheorie und Diskrete Mathematik, Leibniz Universität Hannover, Welfengarten 1, 30167 Hannover, Germany}
	
	\email{xinyu.shao@math.uni-hannover.de}

	\begin{abstract}
		We study the cohomology of integral structure sheaves for two types of open rigid analytic varieties: complements of divisors in quasi-compact varieties, and partially proper spaces. As applications, we prove a strengthened form of the Hebbarkeitssatz (Riemann's extension theorem), involving the almost vanishing of $R^1\lim$, and study pro-\'etale $G$-torsors on open varieties.
	\end{abstract}
	
	\maketitle
	\tableofcontents
	
	\section{Introduction}
	
	Let $K$ be a perfectoid field of mixed characteristic  containing all roots of unity. Let $\mathcal{O}_K$ be its ring of integers, with residue field $\kappa$ of characteristic $p$, and let $\mathfrak m$ be the maximal ideal of $\mathcal{O}_K$. We write $K^+\subseteq\mathcal O_K$ for an open integrally closed subring.

	The main goal of this article is to study the cohomology of integral structure sheaves on non-quasi-compact rigid analytic varieties and its applications (Theorems \ref{application1} and \ref{application2}). Our main technical result is the following.
	
	\begin{thm} \label{main}
		Let $X$ be a quasi-compact adic space locally of finite type over $\Spa(K,K^+)$, and let $D \subset X$ be a Zariski-closed subset of positive codimension everywhere such that $U:=X-D$ is smooth. Then for any increasing sequence $\{U_n\}_{n\in\N}$ of quasi-compact open adic subspaces over $K$ such that $U=\bigcup_n U_n$, and for any $i \geq 0$ and $m\geq 1$, the pro-system $\{H^i_{\et}(U_n,\mathcal O^+/p^m)\}_{n\in \N}$ is almost isomorphic to the constant pro-system with value $H^i_{\et}(U,\mathcal{O}^+/p^m).$
	\end{thm}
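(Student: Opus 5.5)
We will prove the statement by reducing it, through primitive comparison and pro-étale descent, to an explicit computation on perfectoid covers of punctured neighbourhoods of $D$.

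\emph{Step 1: independence of the exhaustion.} If $\{U_n\}$ and $\{U'_n\}$ are two increasing exhaustions of $U$ by quasi-compact opens, each $U_n$ is contained in some $U'_{n'}$ and vice versa. Hence the two pro-systems $\{H^i_{\et}(U_n,\mathcal O^+/p^m)\}$ are isomorphic as pro-objects. We may therefore work with a convenient exhaustion. Choose a finite affinoid cover of $X$, and on each piece choose finitely many functions $f_1,\dots,f_r$ cutting out $D$. Then set $U_n=\{x : \max_j |f_j(x)|\ge |p|^n\}$, glued over the cover. By dévissage along $0\to\mathcal O^+/p^{m-1}\to\mathcal O^+/p^m\to\mathcal O^+/p\to 0$, together with the five lemma for pro-systems (which is valid in the almost category), we reduce to $m=1$. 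By a Mayer--Vietoris/Čech spectral sequence argument over the finite cover, we reduce to $X$ affinoid; here we use that intersections of the $U_n$ with the cover pieces form exhaustions of the same type.

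\emph{Step 2: comparison with a limit.} There is a natural map $H^i(U,\mathcal O^+/p)\to \lim_n H^i(U_n,\mathcal O^+/p)$. The Milnor sequence gives
\[
0\to R^1\lim_n H^{i-1}(U_n,\mathcal O^+/p)\to H^i(U,\mathcal O^+/p)\to \lim_n H^i(U_n,\mathcal O^+/p)\to 0.
\]
It therefore suffices to show that the pro-system $\{H^i(U_n,\mathcal O^+/p)\}$ is almost pro-isomorphic to a constant one. Concretely, for each $n$ there should be $n'\ge n$ such that the image of $H^i(U_{n'})\to H^i(U_n)$ is almost equal to the image of $H^i(U)$, and such that the kernel of restriction from $U_{n'}$ is almost killed in $U_n$. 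Pro-constancy then forces $R^1\lim$ to vanish almost, and the Milnor sequence yields the claim.

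\emph{Step 3: primitive comparison and the key estimate.} Since each $U_n$ is smooth and quasi-compact, Scholze's primitive comparison identifies $H^i(U_n,\mathcal O^+/p)$ almost with $H^i(U_n,\F_p)\otimes\mathcal O_K/p$ when $K^+=\mathcal O_K$. It is tempting to conclude from stabilization of étale cohomology of the tubes $U_n$: the $U_n$ are the complements of closed tubes of radius $|p|^n$ around $D$, and by Huber's results on étale cohomology of tubular neighbourhoods, $H^i(U_n,\F_p)\to H^i(U_{n+1},\F_p)$ should be an isomorphism for $n\gg0$. However, $p$-torsion coefficients in characteristic $0$ are problematic for Huber's stabilization. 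I would therefore argue directly instead. Working locally, pass to a toric-type chart, or use a pro-étale perfectoid cover $\tilde X\to X$ extracting $p$-power roots of the $f_j$. Over this cover, $\mathcal O^+/p$-cohomology of the rational subsets $\{|f_j|\ge|p|^n\}$ is almost computed by Čech complexes of explicit perfectoid algebras of the form $R\langle (p^n/f_j)^{1/p^\infty}\rangle$. In these complexes, the transition maps in $n$ are almost zero on the non-integral-exponent parts and an isomorphism on the rest. This is the analogue of the classical computation that $H^i(\{|z|\ge|p|^n\}\subset$ disc$)$ stabilizes integrally up to bounded torsion, which becomes almost zero after passing to the perfectoid tower. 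Galois descent along the $\Z_p(1)^r$-cover, via continuous group cohomology with almost constant transitions, then gives the statement for $U_n$ and $U$.

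\emph{Main obstacle.} The hardest step is this last one: making the uniform-in-$n$ almost estimate. Concretely, one must show that for each $n$ some fixed $n'$ works independently of $i$, with denominators bounded by a power of $p$ uniformly in $n$, so that everything becomes almost zero. Handling singularities of $D$ (not a normal crossings divisor) is a further difficulty. I would try first to reduce to normal crossings using resolution of singularities for the pair, which is available for quasi-excellent schemes and rigid spaces, since $U$ is unchanged. Independence of the exhaustion from Step 1 then makes the reduction legitimate.
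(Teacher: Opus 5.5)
Your outer reductions are sound and match the paper: independence of the exhaustion, dévissage in $m$, resolution to a normal crossings boundary (legitimate because only $U$ and its exhaustions enter), and the fact that almost pro-constancy plus the Milnor sequence identifies the constant value with $H^i_{\et}(U,\mathcal O^+/p^m)$.

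\textbf{The gap.} Step 3, which is the heart of the theorem, rests on a false claim. You assert that on the perfectoid cover the transition maps are almost zero on the non-integral-exponent parts, and then descend along $\Gamma$ with almost constant transitions. Take the model case $\mathbb B^d$ with $D=\{T_1\cdots T_d=0\}$ and $U_n=\{|T_1\cdots T_d|\ge|p^n|\}$. Almost, $R^\circ_{n,\infty}/p^m$ is $R^\circ_\infty/p^m$ plus rank-one pieces spanned by $p^{n\max(\mathbf w)}/\underline T^{\mathbf w}$, for $\mathbf w\in\Z[1/p]^d$ with $\max(\mathbf w)>0$. On such a piece, the transition from $n+N$ to $n$ is multiplication by a scalar of valuation $N\max(\mathbf w)$. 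When $N\max(\mathbf w)<m$, this map is not zero, not even almost zero. So no single $N$ works for all weights, and the coefficient pro-system is not almost pro-constant. Proposition \ref{failperfectoid} shows exactly this: using $f_n=(p^n/T)^{1/p^n}$, $R^1\lim_n H^0(\widetilde U_n,\mathcal O^+/p)$ is not almost zero. Your ``main obstacle'' therefore cannot be overcome at the perfectoid level. Moreover, the uniformity actually needed is over the infinitely many weights $\mathbf w$, not over $i$ or bounded denominators.

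\textbf{The missing idea.} Uniformity appears only after taking continuous $\Gamma$-cohomology. Let $a=\max(\mathbf w)=r/p^s\in(0,1)$, and choose $k$ with $w_k=a$. The generator $\gamma_k$ acts on that piece by a primitive $p^s$-th root of unity $\zeta$. Computing with the Koszul complex, $\zeta-1$ therefore annihilates $H^i_{\cts}(\Gamma,R_{n,\mathbf w})$. For $N=\max\{m,2\}$ we have $v(\zeta-1)=\frac{1}{p^{s-1}(p-1)}\le\frac{2}{p^s}\le 2a\le Na$. Hence the transition scalar is divisible by $\zeta-1$, and the transition map vanishes on cohomology. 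For $a\ge1$ it already vanishes on the coefficients. So the weights whose transitions decay slowly are exactly those whose cohomology is killed by a correspondingly small element $\zeta-1$. This yields one $N$ for all $\mathbf w$, and hence an almost pro-isomorphism with $c\bigl(H^i_{\cts}(\Gamma,R^\circ_\infty/p^m)\bigr)$, rather than merely a comparison of limits.

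\textbf{Globalizing.} You also need the right local structure. Extracting $p$-power roots of arbitrary defining functions $f_j$ of a singular $D$ does not produce perfectoid algebras. The roots should be taken of the coordinates $T_i$ on Kiehl tubular neighbourhoods $\Spa(A,A^\circ)\times\mathbb B^r$, together with a toric chart on $A$. The local pro-constancy is then glued via hypercover spectral sequences whose pages are pro-constant.
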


	We briefly explain the context of this result. The theory of perfectoid spaces developed by Scholze \cite{scholze2012perfectoid} provides a powerful tool for studying the cohomology of rigid analytic varieties. Together with the pro-\'etale topology, it plays a central role in $p$-adic Hodge theory \cite{scholze2013p}. An important ingredient is the cohomology of the integral structure sheaf: the almost vanishing of higher cohomology on affinoid perfectoid spaces allows one to study $\mathcal O^+/p^m$ by descent from perfectoid covers.

	For open varieties, logarithmic geometry provides a natural way to keep track of the boundary. Diao, Lan, Liu and Zhu developed the theory of log adic spaces and their Kummer \'etale and pro-Kummer \'etale topologies in \cite{dllz2023logadic}. These foundations allow one to study an open variety by equipping a suitable compactification with the log structure associated with its boundary. They also play an important role in the logarithmic Riemann--Hilbert correspondences of \cite{dllz2023logrh}. Using this framework, Lan, Liu and Zhu established de Rham comparison theorems for cohomology with compact support, as well as Poincar\'e duality for rational \'etale cohomology of smooth rigid varieties that are Zariski open in proper rigid varieties \cite{llz2023derham}.

	In the setting of Theorem \ref{main}, the open subspace $U$ is generally not quasi-compact. Computing its cohomology from an increasing sequence of quasi-compact open subspaces therefore involves derived inverse limits. The theorem shows that the resulting pro-systems of cohomology groups with coefficients in $\mathcal O^+/p^m$ are almost constant, giving control of the $R^1\lim$ terms that occur when passing to $U$. In the logarithmic setting, we can further identify their constant values with the Kummer \'etale cohomology of the ambient log adic space, yielding a comparison across the boundary.
	
	More precisely, we have the following logarithmic version of Theorem \ref{main}.
	
	\begin{thm} \label{mainlog}
		Let $X$ be a quasi-compact fs log adic space that is log smooth over $\Spa(K,K^+)$. Denote by $U:=X_{\operatorname{tr}}$ the largest open subspace of $X$ on which the induced log structure is trivial. Then for any increasing sequence $\{U_n\}_{n\in\N}$ of quasi-compact open adic subspaces over $K$ such that $U=\bigcup_n U_n$, and for any $i \geq 0$ and $m\geq 1$, the pro-system $\{H^i_{\et}(U_n,\mathcal O^+/p^m)\}_{n\in \N}$ is almost isomorphic to the constant pro-system with value $H^i_{\ket}(X,\mathcal{O}^+/p^m).$
	\end{thm}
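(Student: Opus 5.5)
The plan is to reduce to local model computations via the pro-Kummer étale site, then globalize with a finite cover and a spectral sequence. Since $X$ is quasi-compact and log smooth, it has a finite cover by rational opens $V$ with strictly étale maps to toric models $\mathbb{E}=\Spa(K\langle P\rangle,K^+\langle P\rangle)$, where $P$ is a toric monoid. On each $V$ the trivial locus $V_{\rm tr}$ is the complement of the boundary divisor.

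\emph{Step 1: reduction to a cofinal family and a local statement.} The statement is independent of the chosen exhaustion $\{U_n\}$. Any two increasing sequences of quasi-compact opens exhausting $U$ are mutually cofinal, because a quasi-compact open of $U$ lies in some $U_n$. So the pro-systems they define are isomorphic, and it suffices to treat one convenient sequence. I take $U_n$ to be the locus where the monoid elements defining the boundary have $|f|\ge |p|^{n}$, glued from the local charts. The same cofinality argument also gives compatibility with restriction to the finite intersections of the cover. Using Čech-to-derived spectral sequences for the finite cover $\{V\}$ on both the Kummer étale side and the étale side of each $U_n$, together with exactness of almost pro-isomorphisms (the pro-category of almost modules is abelian and these spectral sequences are finite), it suffices to prove the claim for each $V$ and its finite intersections. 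Each of these is again of the same type.

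\emph{Step 2: the local computation on a toric chart.} I compute both sides via perfectoid covers. On the Kummer side, $\widetilde{\mathbb E}=\Spa(K\langle P_{\Q_{\ge0}}\rangle,\dots)$ is a log affinoid perfectoid pro-Kummer étale Galois cover with group $\Gamma=\operatorname{Hom}(P^{\rm gp},\hat{\Z}(1))$. The pro-Kummer étale cohomology of $\mathcal O^+/p^m$ is almost computed by continuous group cohomology $H^i(\Gamma, \mathcal O^+_{\widetilde V}/p^m)$. This relies on the almost vanishing results of Diao--Lan--Liu--Zhu, which I will assume. On the étale side, $U_n\cap V$ has the pro-étale perfectoid cover given by the restriction of the same tower, which is now finite étale over the trivial locus. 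So $H^i(U_n\cap V,\mathcal O^+/p^m)\stackrel{a}{=}H^i(\Gamma,\mathcal O^+(\widetilde{U_n\cap V})/p^m)$. The key computation is to decompose all these rings as completed direct sums over characters $a\in P^{\rm gp}_{\Q}$, and to compare the $\Gamma$-cohomology of each graded piece. The pieces indexed by $a\in P_{\Q_{\ge0}}$ agree with the Kummer side. The extra pieces, indexed by monomials with negative exponents and weighted by $p^{n\cdot(\text{negative part})}$, should have transition maps $U_{n+1}\to U_n$ that multiply them by a positive power of $p$ (or of $p^{\epsilon}$). Hence modulo $p^m$ they are pro-zero up to almost mathematics.

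\emph{Step 3: conclusion and main obstacle.} The previous step gives, for each chart, an almost pro-isomorphism between $\{H^i(U_n\cap V,\mathcal O^+/p^m)\}$ and the constant system $H^i_{\ket}(V,\mathcal O^+/p^m)$. This isomorphism is compatible with restriction maps. Step 1 then globalizes it.

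The hardest part is the graded-piece estimate in Step 2. One must show that the negative-exponent contributions die uniformly in the pro-system. This must hold not just after tensoring with $\mathfrak m$ but with a uniform bound, so that the almost pro-zero statement holds for all $i$ simultaneously. One must also handle the strictly étale map $V\to\mathbb E$, which is not a rational localization. For the latter I would first treat $V$ rational in $\mathbb E$. For general $V$ I would use that $\mathcal O^+/p^m$ cohomology of an affinoid perfectoid is almost concentrated in degree $0$, together with almost purity, to base change the graded computation along the étale map.
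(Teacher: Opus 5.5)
\textbf{Step 2 has a genuine gap.} Your Step 1 (independence of the exhaustion, then \v{C}ech/hypercover spectral sequences, where one also needs that extensions of constant pro-objects are constant, as in Lemma \ref{prospectral}) is what the paper does. The gap is in Step 2, at exactly the point that carries the theorem. You argue that each transition multiplies the extra weight summands by a positive power of $p$, so they are ``pro-zero up to almost mathematics''. That inference is false.

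On the summand of weight $\mathbf w$, the transition $U_{n+N}\to U_n$ is multiplication by a scalar of valuation $N\,b(\mathbf w)$. Here $b(\mathbf w)>0$ is the negative part; for the ball with $D=\{T_1\cdots T_d=0\}$ it equals $\max(\mathbf w)$. Since $b(\mathbf w)$ takes values such as $1/p^s$ for every $s$, no single $N$ kills all summands modulo $p^m$, even after multiplying by a fixed $p^{\epsilon}$. So at the level of coefficient modules the pro-system $\{\mathcal O^+(\widetilde U_n)/p^m\}_n$ is \emph{not} almost pro-constant. This is precisely Proposition \ref{failperfectoid}: $R^1\lim_n H^0(\widetilde U_n,\mathcal O^+/p)$ is not almost zero.

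The missing idea is that uniformity in $\mathbf w$ only appears after taking $\Gamma$-cohomology. Suppose $\max(\mathbf w)=w_k=r/p^s<1$ with $(r,p)=1$. Then the $k$-th generator of $\Gamma$ acts on the rank-one summand by a primitive $p^s$-th root of unity $\zeta$. By the Koszul complex, $\zeta-1$ annihilates all of its cohomology, and $v(\zeta-1)=\frac{1}{p^{s-1}(p-1)}\le\frac{2}{p^s}\le 2\max(\mathbf w)$. Take $N=\max\{m,2\}$, which is independent of $\mathbf w$. Then the transition scalar is divisible by $\zeta-1$, so the maps $H^i(\Gamma,R_{n+N,\mathbf w})\to H^i(\Gamma,R_{n,\mathbf w})$ vanish; when $\max(\mathbf w)\ge1$ they already vanish on coefficients. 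Hence the extra contributions form an honestly zero pro-system.

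Your Step 3 names uniformity as the main obstacle, but the mechanism you propose cannot supply it. For a general toric monoid $P$ you would also need a bound $b(\mathbf w)\ge c_P/p^s$, with $p^s$ the $p$-part of the order of $\mathbf w$ in $P^{\rm gp}_{\Q}/P^{\rm gp}$, which you do not address.

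\textbf{The passage to strictly \'etale charts is a second gap.} You need to pass from the model $\mathbb E$ to a strictly \'etale chart $V\to\mathbb E$. The weight grading exists only on torus-stable opens. A rational subset such as $\{|T_1-T_2|\le|p|\}\subset\mathbb B^2$ is not torus-stable, so ``first treat $V$ rational in $\mathbb E$'' does not get off the ground. Almost purity does not transport the grading through rational localizations either. You would need decompletion estimates whose constants are uniform in $n$, which is a substantial additional problem.

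The paper avoids this entirely. By resolution of toric singularities and log \'etale descent, it reduces to $X$ smooth with $D$ a strict normal crossing divisor. Kiehl's theorem (Theorem \ref{tubular}) then gives local charts $V\simeq\Spa(A,A^\circ)\times_K\mathbb B^r$ in which $D\cap V$ is the union of the coordinate hyperplanes. So the boundary directions split off as an honest product. Since $\Gamma$ acts trivially on $A^\circ_\infty/p^m$, the ball computation applies verbatim to $H^j(\Gamma,-)$. The $A$-direction is then handled by the Hochschild--Serre spectral sequence for $\Gamma\times\Gamma'$ together with Lemma \ref{prospectral}, followed by \v{C}ech and hypercover descent.
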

	
	\begin{rem} \label{remequiv}
		Theorem \ref{mainlog} implies Theorem \ref{main}. In fact, by resolution of singularities, we may assume that $X$ is smooth and $D$ is a strict normal crossing divisor. Then the log structure on $X$ defined by $D$ makes $X$ log smooth. Assuming Theorem \ref{mainlog}, for any $i \geq 0$ and $m\geq 1$ we have $$\lim_n H^i_{\et}(U_n,\mathcal O^+/p^m)\simeq^a H^i_{\ket}(X,\mathcal O^+/p^m), \quad R^1\lim_n H^i_{\et}(U_n,\mathcal O^+/p^m)\simeq^a0.$$ The identity
			$$\rg_{\et}(U,\mathcal O^+/p^m) \simeq R\lim_n \rg_{\et}(U_n,\mathcal O^+/p^m)$$ gives the almost short exact sequence $$0 \to R^1\lim_n H^{i-1}_{\et}(U_n,\mathcal O^+/p^m) \to H^i_{\et}(U,\mathcal{O}^+/p^m) \to \lim_n H^i_{\et}(U_n,\mathcal O^+/p^m) \to 0.$$ It follows that for any $i \geq 0$ and $m\geq 1$ we have $$H^i_{\ket}(X,\mathcal{O}^+/p^m) \xrightarrow{\simeq^a} H^i_{\et}(U,\mathcal{O}^+/p^m),$$ which proves the claim.
	\end{rem}
	
	\begin{rem}
		Conversely, one can also deduce Theorem \ref{mainlog} from Theorem \ref{main}, but this implication is less direct. Since the proofs of the two theorems are similar, we will use the language of logarithmic geometry and prove Theorem \ref{mainlog} directly.
	\end{rem}

	\begin{rem}
		Theorem \ref{main} does not hold for perfectoid spaces: we refer the reader to Proposition \ref{failperfectoid} for a counterexample.
	\end{rem}
	
	A direct consequence of our results is a comparison between the pro-Kummer \'etale cohomology of certain period sheaves over $X$ and their pro-\'etale cohomology over $U$.
	
	\begin{cor} \label{period}
		We use the notation of Theorem \ref{mainlog}. Let $\f$ be one of the sheaves $\mathcal O^+/p^m$ ($m\geq 1$), $\widehat{\mathcal O}^+$, $\mathcal O^{\flat+}$, or $\mathbb{A}_{\inf}$ on $X_{\proket}.$ Then for $i \geq 0$, the natural map $$H^i_{\proket}(X,\f) \to H^i_{\proet}(U,\f)$$ is an almost isomorphism.
	\end{cor}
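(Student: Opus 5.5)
The plan is to first settle the case $\f=\mathcal O^+/p^m$, and then obtain the remaining sheaves by passing to limits over $m$ and, for $\mathbb A_{\inf}$, by tilting and dévissage.

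\emph{The case $\f=\mathcal O^+/p^m$.} By the comparison results of \cite{dllz2023logadic} between Kummer étale and pro-Kummer étale cohomology of $\mathcal O^+/p^m$ on a log smooth fs log adic space, we have $H^i_{\ket}(X,\mathcal O^+/p^m)\simeq H^i_{\proket}(X,\mathcal O^+/p^m)$. The classical pro-étale comparison \cite{scholze2013p} gives $H^i_{\et}(U,\mathcal O^+/p^m)\simeq H^i_{\proet}(U,\mathcal O^+/p^m)$. Since $U$ carries the trivial log structure, $U_{\proket}=U_{\proet}$. Hence the map in question is identified with the restriction $H^i_{\ket}(X,\mathcal O^+/p^m)\to H^i_{\et}(U,\mathcal O^+/p^m)$. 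This restriction is an almost isomorphism by Theorem \ref{mainlog}, combined with the Milnor sequence argument of Remark \ref{remequiv} applied to an exhaustion $U=\bigcup_n U_n$. Such an exhaustion exists because $U$ is the complement of the Zariski-closed boundary in the quasi-compact space $X$; one can take, for example, $U_n=\{|f_j|\ge|p|^n\}$ locally.

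\emph{The case $\widehat{\mathcal O}^+=\lim_m\mathcal O^+/p^m$.} On both sites, $R\Gamma(-,\widehat{\mathcal O}^+)\simeq R\lim_m R\Gamma(-,\mathcal O^+/p^m)$. This holds because the transition maps are surjective and $R^j\lim_m \mathcal O^+/p^m$ vanishes on a basis of affinoid perfectoid objects, using the almost vanishing of higher cohomology there. An almost isomorphism of inverse systems of complexes induces an almost isomorphism on $R\lim$. The point is that the map at each level is natural in $m$, and the cones form an inverse system of almost zero complexes, whose $R\lim$ is again almost zero, since almost zero modules form a Serre subcategory closed under products. One must check the last claim carefully, as $\mathfrak m$ is not finitely generated. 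It holds because a product of modules killed by $\mathfrak m$ is killed by $\mathfrak m$; then apply the spectral sequence. So the map $H^i_{\proket}(X,\widehat{\mathcal O}^+)\to H^i_{\proet}(U,\widehat{\mathcal O}^+)$ is an almost isomorphism.

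\emph{The cases $\mathcal O^{\flat+}$ and $\mathbb A_{\inf}$.} For $\mathcal O^{\flat+}$, use $\mathcal O^{\flat+}/p^\flat\simeq\mathcal O^+/p$ on both sites and $\mathcal O^{\flat+}=\lim_{\Phi}\mathcal O^+/p$. Alternatively, use $\mathcal O^{\flat+}/(p^\flat)^m$, which by induction on $m$ via the short exact sequences $0\to\mathcal O^{\flat+}/p^\flat\to\mathcal O^{\flat+}/(p^\flat)^{m+1}\to\mathcal O^{\flat+}/(p^\flat)^m\to0$ reduces to $m=1$ by the five lemma in the almost category. Then take $R\lim$ as before. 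For $\mathbb A_{\inf}=W(\mathcal O^{\flat+})$, use the filtration by $W_r$. The sheaves $W_r(\mathcal O^{\flat+})$ are successive extensions of copies of $\mathcal O^{\flat+}$ via $V$, so the five lemma gives the almost isomorphism for each $W_r$, and $R\lim_r$ finishes the argument. Here one uses the almost structure with respect to $W(\mathfrak m^\flat)$, in the convention of \cite{scholze2013p}, where $[\epsilon]$-torsion controls almostness.

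I expect the main obstacle to be the first step. It requires making sure the identification of Kummer étale with pro-Kummer étale cohomology for $\mathcal O^+/p^m$ is available and compatible with restriction to $U$. The remaining steps are formal dévissage, whose only delicate point is the $R\lim$ vanishing on perfectoid bases.
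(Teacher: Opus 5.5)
Your proposal is correct and follows the route the paper intends. The paper gives no separate proof, calling the corollary a direct consequence of Theorem \ref{mainlog} via Remark \ref{remequiv}; your argument is the expected one: identify $H^i_{\ket}$ with $H^i_{\proket}$ and $H^i_{\et}$ with $H^i_{\proet}$ for $\mathcal O^+/p^m$, then pass to almost derived limits and use the $p^\flat$- and $W_r$-d\'evissages. The only cosmetic point is that $R\Gamma(-,\widehat{\mathcal O}^+)\simeq R\lim_m R\Gamma(-,\mathcal O^+/p^m)$, and its analogues for $\mathcal O^{\flat+}$ and $\mathbb A_{\inf}$, hold only up to almost quasi-isomorphism, which is all your argument uses.
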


	\begin{rem}
		When $X$ is smooth and $X-D$ is a strictly normal crossing divisor, the above corollary should also be deduced from Mann's six functor formalism \cite{mann2022padic6functorformalismrigidanalytic}: the key input is that when $D$ is smooth and $j:U \hookrightarrow X$, we have $$Rj_*(\mathcal O^+_U/p) \simeq^a Rj_*(\F_p)\otimes_{\F_p}\mathcal O^+_X/p$$ by purity, deduced from the Poincar\'e duality \cite[Theorem 3.10.20]{mann2022padic6functorformalismrigidanalytic}, together with localization and the mod-$p$ Riemann–Hilbert functor \cite[Theorem 3.9.23]{mann2022padic6functorformalismrigidanalytic}. We will not discuss this approach in this article.
	\end{rem}
	
	\begin{rem}
		In general, $U$ is not quasi-compact, and the corollary does not hold for $\f[1/p]$ when $\f=\widehat{\mathcal O}^+$ or $\mathbb{A}_{\inf}$. In particular, the corollary does not recover the result of \cite[Corollary 1.3]{xsperiod}, where the author shows that the natural map $$H^i_{\proket}(X,\mathbb B_{\operatorname{dR}}) \to H^i_{\proet}(U,\mathbb B_{\operatorname{dR}})$$ is an isomorphism for any $i \geq 0$.
	\end{rem}
	

	
	One can also use Theorem \ref{mainlog} to prove the first Riemann extension theorem of \cite[\S3]{bartenwerfer1976riemannextension}, which states that if $X$ is a normal rigid space and $D$ is a divisor in $X$, then $\mathcal{O}^+(X-D)=\mathcal{O}^+(X)$. In fact, our results for $i=0$ already imply a stronger statement\footnote{As far as we know, it is not clear how to deduce $R^1\lim_n \mathcal{O}^+(U_n) \simeq^a 0$ from the proof in \cite[\S3]{bartenwerfer1976riemannextension}.}:
	
	\begin{thm}[Riemann I, Theorem \ref{riemann}] \label{application1}
		Let $X$ be a normal affinoid adic space locally of finite type over $\Spa(K,K^+)$, and let $D \subset X$ be a Zariski-closed subset of positive codimension everywhere such that $U:=X-D$ is smooth. Then for any increasing sequence $\{U_n\}_{n\in\N}$ of affinoid open adic subspaces over $K$ such that $U=\bigcup_n U_n$, we have $$\mathcal{O}^+(U)=\mathcal{O}^+(X), \quad R^1\lim_n \mathcal{O}^+(U_n) \simeq^a 0.$$
	\end{thm}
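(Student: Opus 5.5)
Throughout, $U=\bigcup_n U_n$ with each $U_n\subseteq U$ affinoid, so $U_n\subseteq X$ is a rational-type open in an affinoid and $\mathcal O^+(U)=\lim_n \mathcal O^+(U_n)$ by the sheaf property. The plan is to deduce both claims from the case $i=0$ of Theorem \ref{main}, applied for every $m\geq 1$, and then pass to the limit over $m$.

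\textbf{Step 1 (mod $p^m$).} Theorem \ref{main} with $i=0$ says that the pro-system $\{H^0(U_n,\mathcal O^+/p^m)\}_n$ is almost isomorphic to the constant system $H^0(U,\mathcal O^+/p^m)$. Hence, for each $m$,
$$\lim_n H^0(U_n,\mathcal O^+/p^m)\simeq^a H^0(U,\mathcal O^+/p^m), \qquad R^1\lim_n H^0(U_n,\mathcal O^+/p^m)\simeq^a 0.$$
Theorem \ref{main} with $i=1$ also gives that $\{H^1(U_n,\mathcal O^+/p^m)\}_n$ is almost pro-constant. Since $U_n$ is affinoid and smooth, almost purity (via a pro-étale perfectoid cover) gives that $H^1_{\et}(U_n,\mathcal O^+)$ is almost zero. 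I will use this to relate $H^0(U_n,\mathcal O^+/p^m)$ to $\mathcal O^+(U_n)/p^m$, and to get the almost exact sequence
$$0\to \mathcal O^+(U_n)/p^m\to H^0(U_n,\mathcal O^+/p^m)\to H^1(U_n,\mathcal O^+)[p^m]\simeq^a 0.$$
Here I use that $\mathcal O^+(U_n)$ is $p$-torsion free.

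\textbf{Step 2 (pass to $\mathcal O^+$).} Form the double limit. Since $\mathcal O^+(U_n)$ is $p$-adically complete, $R\lim_m \mathcal O^+(U_n)/p^m\simeq \mathcal O^+(U_n)$. Therefore
$$R\lim_n \mathcal O^+(U_n)\simeq R\lim_m R\lim_n \mathcal O^+(U_n)/p^m.$$
By Step 1, the inner term is almost concentrated in degree $0$ with value $A_m:=H^0(U,\mathcal O^+/p^m)$ (almost). The transition maps $A_{m+1}\to A_m$ are almost surjective, since they factor through the almost isomorphisms with the $U_n$-terms and $\mathcal O^+(U_n)/p^{m+1}\to\mathcal O^+(U_n)/p^m$ is surjective. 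Hence the $R^1\lim_m$ almost vanishes, and $R^1\lim_n\mathcal O^+(U_n)\simeq^a 0$.

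One technical point is the interchange of almost statements with $\lim$. Almost isomorphism of pro-systems means that for each $\epsilon\in\mathfrak m$ there are uniform kernel and cokernel bounds after a shift. These bounds survive $R\lim$, because the almost category is abelian and $R\lim$ commutes with the exact functor to almost modules on countable systems.

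\textbf{Step 3 (Riemann I on sections).} It remains to show $\mathcal O^+(U)=\mathcal O^+(X)$ on the nose, not just almost. Normality of $X$ and positive codimension of $D$ give that $\mathcal O(U)\supseteq\mathcal O(X)$, and restriction is injective because $U$ is dense. I would first show that bounded functions extend. Step 2 shows $\mathcal O^+(U)$ is almost equal to $\lim_m H^0(U,\mathcal O^+/p^m)$, which is $p$-adically complete, and Theorem \ref{mainlog} identifies it with a Kummer-étale $H^0$ on a resolution $\tilde X\to X$ with SNC boundary. Its $H^0(\tilde X_{\ket},\mathcal O^+)=\mathcal O^+(\tilde X)=\mathcal O^+(X)$ by normality, since proper birational maps to normal spaces have connected fibers. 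This gives $\mathcal O^+(U)\simeq^a\mathcal O^+(X)$. To remove the "almost": if $f\in\mathcal O^+(U)$, then $\epsilon f\in\mathcal O^+(X)$ for all $\epsilon\in\mathfrak m$. So $f\in\mathcal O(X)$ with $|f(x)|\leq|\epsilon|^{-1}$ for all $\epsilon$, hence $|f|\leq 1$, and $f\in\mathcal O^+(X)$ since $\mathcal O^+$ is integrally closed and power bounded, which is detected pointwise.

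\textbf{Main obstacle.} I expect the main difficulty to be the Kummer-étale identification in Step 3 for a singular normal $X$. This needs care with resolution, and with the fact that the statement lives on an affinoid rather than a proper space; the $R^1\lim$ bookkeeping is comparatively routine.
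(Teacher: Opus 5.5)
There is a genuine gap in Step 1, and Step 2 depends on it. Your overall architecture matches the paper's: compare the two iterated derived limits of $H^0_{\et}(U_n,\mathcal O^+/p^m)$, use Theorem \ref{main}/\ref{mainlog}, then remove ``almost'' pointwise. The problem is the claim that $H^1_{\et}(U_n,\mathcal O^+)$ is almost zero for a smooth affinoid $U_n$, which is false. Almost purity gives almost vanishing only on affinoid perfectoids. Descending along the $\Z_p^d$-tower produces $(\zeta-1)$-torsion that is not almost zero.

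Concretely, take $X$ the closed disc, $D=\{T=0\}$, $U_n=\{|p^n|\le|T|\le 1\}$, and $p$ odd. On the finite \'etale cover of $U_n$ adjoining $T^{1/p}$, put $s=p^{1-1/(p-1)}T^{1/p}$. Then $\gamma(s)-s=(\zeta_p-1)s\in p\,\mathcal O^+$, so $s \bmod p$ descends to a section of $\mathcal O^+/p$ on $U_n$, compatibly in $n$. But $p^{\epsilon}s$ is not in the image of $\mathcal O^+(U_n)/p$ for small $\epsilon>0$, since it has a nonzero weight-$1/p$ component. For $p=2$, use $s=T^{1/2}$.

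So $\mathcal O^+(U_n)/p\to H^0_{\et}(U_n,\mathcal O^+/p)$ is not almost surjective. Only bounded $p$-torsion of $H^1$ is true. The failure also persists at the level of pro-systems. One has $\lim_n \mathcal O^+(U_n)/p=\mathcal O_K/p[T]$, which contains only integral weights. In contrast, $s$ gives an element of $H^0_{\et}(U,\mathcal O^+/p)$; these are the fractional-weight invariants of Subsection \ref{disc}. Hence your identification of $R\lim_n \mathcal O^+(U_n)/p^m$ with $A_m$ is wrong. The claim that $A_{m+1}\to A_m$ is almost surjective also fails: the weight-$1/p$ part of $A_2$ has coefficients of valuation at least $2-1/(p-1)>1$, so it maps to zero in $A_1$.

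The missing ingredient is Proposition \ref{limitcoh}: $H^1_{\proet}(U_n,\widehat{\mathcal O}^+)$ has bounded $p$-torsion. Use the exact sequence $0\to \mathcal O^+(U_n)/p^m\to H^0_{\et}(U_n,\mathcal O^+/p^m)\to H^1_{\proet}(U_n,\widehat{\mathcal O}^+)[p^m]\to 0$. Its right-hand pro-system in $m$ has transition maps given by multiplication by $p$, so bounded torsion makes it pro-zero. This gives $R\lim_m H^0_{\et}(U_n,\mathcal O^+/p^m)\simeq^a \mathcal O^+(U_n)$.

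Because the torsion bound depends on $n$, this identification is available only after taking $R\lim_m$ for each fixed $n$. One then interchanges to $R\lim_m R\lim_n$ and applies Theorem \ref{mainlog} to reach $R\lim_m H^0_{\ket}(X,\mathcal O^+/p^m)$. Identifying this with $\mathcal O^+(X)$ needs the logarithmic analogue, Proposition \ref{limitcohlog}. It also needs the reduction from the non-affinoid resolution via $f_*\mathcal O^+=\mathcal O^+$ and the relative primitive comparison. There, the fractional-weight classes die in $R\lim_m$ because they are bounded torsion, not because the transition maps are almost surjective.

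So the $R^1\lim$ bookkeeping you call routine is exactly where the nontrivial input lies. Your final step removing ``almost'' is fine and agrees with the paper.
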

	
	\begin{rem}
		We would like to emphasize that the result $R^1\lim_n \mathcal{O}^+(U_n) \simeq^a 0$ seems to be highly nontrivial: while the problem is purely analytic in nature, our proof relies heavily on perfectoid geometry and $p$-adic Hodge theory. It would be interesting to find a more direct proof.
	\end{rem}

	\begin{rem}
		Another version of Riemann's extension theorem is the ``quantitative form of Scholze's Riemann extension theorem'' proved in \cite[Theorem 4.2]{bhatt2018directsumconj}. While we use perfectoid geometry to prove Theorem \ref{main}, it is not clear whether \cite[Theorem 4.2]{bhatt2018directsumconj} can be used to prove our results. We refer the reader to Subsection \ref{riemannperf} for a detailed discussion.
	\end{rem}
	
	The quasi-compactness condition on $X$ is not necessary if one only needs the almost vanishing of $R^1\lim$. Another result of this article is the almost vanishing of $R^1\lim$ for Stein (or, more generally, partially proper) spaces:

	\begin{thm}[Theorem \ref{stein}]
		Let $X$ be a smooth partially proper space over $K$, and let $\{U_n\}_{n\in\N}$ be an increasing sequence of quasi-compact open adic subspaces such that $X=\bigcup_n U_n$. Then for any $i \geq 0$ and $m\geq 1$, $$R^1\lim_n H^i_{\et}(U_n, \mathcal{O}^+/p^m)$$ is almost zero.
	\end{thm}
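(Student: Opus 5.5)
The plan is to show that the pro-system $\{H^i_{\et}(U_n,\mathcal O^+/p^m)\}_n$ is almost Mittag-Leffler, i.e.\ that for every $n$ the images of the transition maps $H^i_{\et}(U_{n'},\mathcal O^+/p^m)\to H^i_{\et}(U_n,\mathcal O^+/p^m)$ stabilize up to almost zero as $n'$ grows. Almost vanishing of $R^1\lim$ follows from this. The input is the finiteness available for quasi-compact pieces that sit inside larger opens.

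\textbf{Step 1: reduction.} First we replace $\{U_n\}$ by a cofinal subsequence; this changes neither $\lim$ nor $R^1\lim$. Since $X$ is partially proper, every quasi-compact open $V\subset X$ is contained in a quasi-compact open $W$ with $\overline V\subset W$, so that $V\Subset W$ is relatively compact (inside) over $K$. After passing to a subsequence we may therefore assume that $U_n\Subset U_{n+1}$ for all $n$. We may also assume $m=1$. The general case then follows by d\'evissage using $0\to\mathcal O^+/p^{m-1}\to\mathcal O^+/p^m\to\mathcal O^+/p\to 0$, together with the fact that almost Mittag-Leffler systems with almost finitely generated-type control are preserved in long exact sequences. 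Since this is delicate, it is better to prove the finiteness statement of Step 2 for each $m$ directly.

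\textbf{Step 2: finiteness of images.} The key claim is that for a relatively compact inclusion $V\Subset W$ of smooth quasi-compact opens, the image of
\[
H^i_{\et}(W,\mathcal O^+/p)\to H^i_{\et}(V,\mathcal O^+/p)
\]
is almost finitely generated over $\mathcal O_K/p$. This is a Kiehl/Scholze type result, modelled on \cite{scholze2013p}. The argument runs as follows.
\begin{itemize}
\item Cover $W$ by finitely many affinoids admitting toric charts, and choose strictly smaller affinoid covers of $V$.
\item Compute $\mathcal O^+/p$-cohomology via the pro-\'etale perfectoid tower, using almost purity and the almost vanishing of higher cohomology on affinoid perfectoids. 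This yields \v{C}ech--continuous group cohomology complexes.
\item Use that restriction to a strict sub-affinoid $\mathcal O^+(\text{larger})\to\mathcal O^+(\text{smaller})$ is almost "compact" modulo $p$, i.e.\ it factors through finite generation after shrinking. Apply the cofinality and compactness argument of \cite[Lemma 5.4--5.6]{scholze2013p} to the composite of two strict shrinkings.
\end{itemize}
This gives almost finite generation of the image. The same argument applies with $\mathcal O^+/p^m$.

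\textbf{Step 3: stabilization.} Set $I_{n,n'}$ to be the image of level $n'$ in level $n$. By Step 2, $I_{n,n+1}$ is almost finitely generated over $\mathcal O_K/p^m$, and $I_{n,n'}$ decreases in $n'$. An almost finitely generated $\mathcal O_K/p^m$-module is not Artinian, so stabilization does not follow formally. The plan is instead to use the almost finitely presented structure: almost finitely generated torsion $\mathcal O_K$-modules are, up to $\epsilon$, direct sums of finitely many $\mathcal O_K/a_j$ (\cite[Lemma 2.11]{scholze2013p}-type). With this, a descending chain of submodules has "lengths" in $\R_{\ge0}$ bounded by $m\cdot$rank. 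For each $\epsilon$ the chain then $\epsilon$-stabilizes. This gives the almost Mittag-Leffler property in the sense that for each $\epsilon\in\mathfrak m$ and each $n$ there is $N$ with $\epsilon I_{n,n'}\subset I_{n,n''}$ for all $n'',n'\ge N$. Finally, a standard lemma shows that such pro-systems have almost zero $R^1\lim$: apply $\epsilon$-multiplication to the Mittag-Leffler argument of the $\lim^1$ sequence.

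\textbf{Main obstacle.} I expect Step 2 to be the hardest part. It requires the relatively-compact refinement of Scholze's finiteness argument, carried out in a nonproper setting and in a form that controls images rather than full cohomology. A secondary subtlety is Step 3. The bound must be uniform: the number of generators of $I_{n,n'}$ has to be bounded independently of $n'$. This holds because $I_{n,n'}\subset I_{n,n+1}$, but taking submodules of almost finitely generated modules needs care. To handle it, I would try to work with the invariant $\sum_j v(a_j)$, which is bounded above.
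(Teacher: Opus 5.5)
Your proposal is correct and is essentially the paper's proof. After refining the exhaustion so that each $U_n$ is strictly contained in $U_{n+1}$, your Step 2 is exactly the almost finite generation of images from \cite[Corollary 3.6]{scholze2018cohlubintate}, which the paper cites rather than reproves, and your Step 3 is the content of Theorem \ref{vanishingalmost} (for $\mathcal O_K/p^m$-modules, essentially Proposition \ref{vanishingalmostfg}). Two points need to be made precise. The almost Mittag-Leffler argument must be run with a summable sequence $\epsilon_n$ along the tower. The almost stabilization of chains is also easier than your length argument: pass to a finitely generated submodule containing $tM$, then induct on generators starting from the ideal case via $\inf v$.
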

	
	The proof of this theorem uses the following result. It is somewhat surprising that no spherical completeness assumption on $K$ is needed in the almost setting.
	
	\begin{thm}[Theorem \ref{vanishingalmost}]
		Let $\{M_n\}_{n\in\N}$ be an inverse system of $\mathcal O_K$-modules. Assume that for each $n$, the image of $M_{n+1}$ in $M_n$ is almost finitely generated. Then $$R^1\lim\nolimits_n M_n \simeq^a 0.$$
	\end{thm}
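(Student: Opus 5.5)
The plan is to reduce to a system of (almost) finitely generated modules, prove an ``almost Mittag-Leffler'' property for such systems using the fact that $\mathcal O_K$ is a rank-one valuation ring, and then run the usual Mittag-Leffler argument while tracking almost-zero factors. Throughout, recall that $R^1\lim_n M_n$ is the cokernel of
$$d\colon \prod_n M_n \to \prod_n M_n,\qquad (x_n)\mapsto (x_n - f_n(x_{n+1})),$$
where $f_n\colon M_{n+1}\to M_n$ are the transition maps. We must show that every $\varepsilon\in\mathfrak m$ kills this cokernel.

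\textbf{Step 1 (reduction).} Put $N_n:=\im(M_{n+1}\to M_n)$. The inclusions $N_n\subseteq M_n$ and the maps $M_{n+1}\to N_n$ make $\{N_n\}$ pro-isomorphic to $\{M_n\}$, so $R^1\lim_n M_n\cong R^1\lim_n N_n$. Each $N_n$ is almost finitely generated, and so is $\im(N_{n+1}\to N_n)=\im(M_{n+2}\to M_n)$. Hence we may assume every $M_n$ is almost finitely generated. For a fixed $\varepsilon$, choose finitely generated submodules $F_n\subseteq M_n$ with $\varepsilon M_n\subseteq F_n$. Finitely generated modules over the valuation ring $\mathcal O_K$ are direct sums of a free module of finite rank and cyclic modules $\mathcal O_K/I$.

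\textbf{Step 2 (almost Mittag-Leffler; main obstacle).} For each $n$, let $I_{n,k}:=\im(M_{n+k}\to M_n)$, a decreasing chain. I want to show that for every $\delta\in\mathfrak m$ there is $k_0=k_0(n,\delta)$ with
$$\delta\, I_{n,k_0}\subseteq I_{n,j}\quad\text{for all } j.$$
In general the chain need not stabilize: for example, $M_n=\mathcal O_K$ with transition maps multiplication by $p^{1/2^n}$ gives $I_{0,k}=p^{1-1/2^k}\mathcal O_K$. In that example, however, $p^{\delta'}I_{0,k}\subseteq p\mathcal O_K\subseteq I_{0,j}$ once $1/2^k<\delta'$, because the valuations converge in $\R$.

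In general I would work inside the finitely generated module $F_n$. Up to $\varepsilon$-errors, the submodules $\varepsilon I_{n,k}$ are finitely generated submodules of $F_n$. I would attach to them numerical invariants: the ranks of the torsion-free parts, which can drop only finitely often, and the valuations of their elementary divisors/Fitting ideals, which are real numbers bounded below and monotone along the chain. Eventually all ranks are constant, and the valuations converge; at that point a small power of $p$ times a late term lies in every term. Controlling this uniformly, including the interaction between the torsion and free parts, is where I expect the real work to be. It is also where the rank-one valuation (not spherical completeness) enters.

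\textbf{Step 3 (almost vanishing).} Given $(y_n)\in\prod M_n$ and $\varepsilon\in\mathfrak m$, write $\varepsilon=\prod_{n\ge0}\varepsilon_n$ formally, choosing $\varepsilon_n$ with valuations $v(\varepsilon)/2^{n+1}$; this is possible since $K$ contains all $p$-power roots. I would then imitate the standard Mittag-Leffler proof. Pass to the subsequence $k_0(n,\varepsilon_n)$ from Step 2. Solve $x_n - f_n(x_{n+1}) = \varepsilon y_n$ inductively, choosing $x_{n+1}$ as a preimage of an element that lies in the almost-stable image. Each step costs one factor $\varepsilon_n$, and since the product of all $\varepsilon_n$ has valuation $v(\varepsilon)$, the total loss is bounded by $\varepsilon$.

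This shows $\varepsilon\cdot\operatorname{coker}(d)=0$. Since $\varepsilon\in\mathfrak m$ was arbitrary, $R^1\lim_n M_n\simeq^a0$.
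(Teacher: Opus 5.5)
The gap is in Step 2. The almost Mittag-Leffler property you want there is false for systems of $\mathcal O_K$-modules that are not killed by a power of $p$. This is also where $p$-adic completeness of $\mathcal O_K$ has to enter, and your argument never uses it.

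Take $M_n=\mathcal O_K$ for all $n$, with every $f_n$ equal to multiplication by $p$. All images are $p\mathcal O_K$, so the hypotheses hold, and $I_{n,k}=p^k\mathcal O_K$. For no $\delta\in\mathfrak m$ and no $k_0$ is $\delta p^{k_0}\mathcal O_K\subseteq p^j\mathcal O_K$ for all $j$. Your numerical heuristic breaks here: the rank stays $1$, and the valuations are monotone and bounded below, but they tend to $\infty$. They are forced to converge only when bounded above, as for $\mathcal O_K/p^m$-modules, where they lie in $[0,m]$.

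In this example $R^1\lim_n M_n=0$ only because $x_n:=\sum_{k\geq 0}p^ky_{n+k}$ converges in $\mathcal O_K$. Your Steps 2--3 are purely algebraic and would run verbatim over the non-complete rank-one valuation ring $\mathcal O_{\overline{\mathbb Q}_p}$. There, the same tower has $R^1\lim$ not almost zero. Choose $y_k\in\mathcal O_{\overline{\mathbb Q}_p}$ with $z:=\sum_k p^ky_k\in\mathcal O_{\mathbb C_p}\setminus\overline{\mathbb Q}_p$. Then $x_n-px_{n+1}=ty_n$ with $t\neq 0$ forces $x_0=tz$, which is impossible. So Mittag-Leffler bookkeeping alone cannot prove the theorem.

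The paper runs your Steps 2--3 only modulo $p^m$ (Proposition \ref{vanishingalmostfg}):
\begin{enumerate}
\item[(i)] Decreasing chains of submodules of a finitely generated $\mathcal O_K/p^m$-module are almost stationary. For cyclic modules this follows from the infimum of valuations in $[0,m]$; in general it follows by induction on the number of generators, with no structure theorem needed.
\item[(ii)] A subsystem $M_n'$ with $p^{\epsilon_n}M_n'\subseteq \im(M'_{n+1}\to M'_n)$, together with a lemma of Gabber--Ramero, gives almost vanishing.
\item[(iii)] To reach finitely generated $\mathcal O_K$-modules, the paper uses that $R^1\lim_n M_n$ is then derived $p$-complete with almost zero reduction mod $p$, and applies an almost derived Nakayama lemma. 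This is where completeness enters.
\item[(iv)] For almost finitely generated $M_n$, it takes finitely generated $N_n$ with $\varpi_nM_n\subseteq N_n\subseteq\varpi_{n-1}M_n$ and $v(\varpi_n)$ increasing below $v(t)$. Letting the constant vary with $n$ is what makes $\{N_n\}$ an inverse subsystem. Your $F_n$, built with a fixed $\varepsilon$, need not form one.
\end{enumerate}

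Two smaller points. First, the decomposition into a free module plus cyclic summands holds for finitely presented $\mathcal O_K$-modules. For arbitrary finitely generated ones it requires $\mathcal O_K$ to be (almost) maximal, which for complete $K$ means spherically complete --- exactly the hypothesis the theorem avoids. Second, $\varepsilon I_{n,k}$ need not be finitely generated, so its Fitting ideals or elementary divisors are not a priori available.
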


	Another motivation comes from the $p$-adic Simpson correspondence, initiated by Faltings \cite{faltings2005simpson}. In \cite{heuer2024padicsimpsoncorrespondencesmooth}, Heuer constructed an equivalence between pro-\'etale vector bundles and Higgs bundles on a smooth proper rigid variety over a complete algebraically closed extension of $\Q_p$, after choosing a lift to $B_{\operatorname{dR}}^+/\xi^2$ and an exponential. An ingredient in the proof is the correspondence between small objects: \cite[Theorem 6.5]{heuer2026moduli} gives an equivalence between small $G$-bundles and small $G$-Higgs bundles after fixing a toric chart, for a rigid analytic group $G$.

	For open varieties, it is natural to ask how these small objects behave near the boundary, and how the pro-\'etale theory on the open variety compares with the pro-Kummer \'etale theory on a compactification. For a fixed toric chart, small $G$-bundles come from torsors under sufficiently small open subgroups of $G$. Such torsors can be studied using the cohomology of integral structure sheaves \cite{heuer2026gtorsor}, so our results apply to their extension across the boundary. The extension statement below also indicates a limitation of restricting to small objects on a non-quasi-compact variety: such torsors already extend to the logarithmic compactification, whereas the corresponding comparison for the full structure group $G$ can fail. We return to this point in Subsection \ref{torsors}.

	For the following statement, let $G$ be a rigid analytic group with good reduction, with a fixed smooth formal model. Let $G_k$ be its small open subgroups from Definition \ref{smallsubgroups}, and fix $k>\alpha$ in the value group of $K$, where $\alpha$ is the constant used in Subsection \ref{torsors}.
	
	\begin{thm}[Theorem \ref{gtorsor}] \label{application2}
		Let $X$ be an fs log adic space that is log smooth over $\Spa(K,K^+)$. Denote by $U:=X_{\operatorname{tr}}$ the largest open subspace of $X$ on which the induced log structure is trivial. Then any $G_k$-torsor on $U_{\proet}$ extends uniquely to a $G_k$-torsor on $X_{\proket}$.
		
		Moreover, if $X$ is affinoid or proper, and $\{U_n\}_{n\in\N}$ is an increasing sequence of affinoid open adic subspaces over $K$ such that $U=\bigcup_n U_n$, then restriction induces a bijection of pointed sets $$H^1_{\proet}(U,G_k) \xrightarrow{\sim} \lim_n H^1_{\proet}(U_n,G_k),$$ where $H^1$ denotes the set of isomorphism classes of torsors.
	\end{thm}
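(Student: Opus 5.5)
The plan is to treat $G_k$-torsors through the non-abelian cohomology of small open subgroups and reduce everything to the almost statements of Theorem \ref{mainlog} and Corollary \ref{period}. First I would recall the structure of $G_k$ from Definition \ref{smallsubgroups}: for $k>\alpha$, the group $G_k$ is filtered by the subgroups $G_{k'}$ ($k'\geq k$). The successive quotients $G_{k'}/G_{k''}$ are abelian and, as pro-étale sheaves, identified with $\mathfrak g\otimes \mathcal O^+\cdot p^{k'}/p^{k''}$, where $\mathfrak g$ is the Lie algebra of the formal model. Moreover $G_k=\lim_{k'}G_k/G_{k'}$ as sheaves on $X_{\proket}$ (using $\widehat{\mathcal O}^+$-completeness). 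So a $G_k$-torsor is controlled by a tower of torsors under these abelian graded pieces, with obstruction classes in $H^2$ and ambiguity in $H^1$ of the sheaves $\mathcal O^+/p^m\otimes\mathfrak g$.

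For the extension statement, I would argue locally on $X$, which is possible because torsors form a stack. This lets me assume $X$ is affinoid with a toric chart, hence quasi-compact. The key step is dévissage: compare the groupoids of $G_k/G_{k'}$-torsors on $X_{\proket}$ and on $U_{\proet}$ by induction on $k'$, using the central extensions $1\to G_{k'}/G_{k''}\to G_k/G_{k''}\to G_k/G_{k'}\to 1$. By Corollary \ref{period} (with $\mathcal F=\mathcal O^+/p^m$), the comparison map on $H^0,H^1,H^2$ of the abelian pieces is an almost isomorphism.

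The main obstacle is that almost isomorphisms do not directly give bijections of torsor sets. This is exactly where the hypothesis $k>\alpha$ must enter. I would handle it with the standard trick of losing a small amount of $p$-adic precision: the kernels and cokernels are killed by $\mathfrak m$, hence by $p^{\epsilon}$ for any $\epsilon>0$. So a class on $U$ for $G_{k}$ is shown to come from $X$ after pushing along $G_k\to G_{k-\epsilon}$. Conversely, I would use the fact that the maps $G_{k+\epsilon}\to G_k$ and a "multiplication by $p^\epsilon$" type map (available via the exponential/logarithm on $G_k$ for $k>\alpha$, giving $G_k\cong \mathfrak g\otimes p^k\widehat{\mathcal O}^+$ as sheaves of sets, compatibly up to higher order) let me transport back. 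Concretely, I would first try to show directly that $H^1$ of $\mathfrak g\otimes p^k\widehat{\mathcal O}^+$ compares bijectively. Under the $p$-adic log, the nonabelian cocycle condition is approximated by the abelian one. I expect this deformation argument, controlling the error terms uniformly in the successive approximation, to be the hardest part. Uniqueness then follows from injectivity on $H^1$ together with the $H^0$ comparison for automorphism sheaves, which are forms of $G_k$ and are handled in the same way. This lets the local extensions glue.

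For the second statement, with $X$ affinoid or proper and $U_n$ affinoid, I would write $H^1_{\proet}(U,G_k)$ via the torsors on the $U_n$ and a Milnor-type exact sequence of pointed sets. The map is surjective provided the $R^1\lim$ of the automorphism groups $G_k(U_n)$ vanishes in the non-abelian sense, i.e. a compatible system of torsors glues. It is injective since $H^1(U)$ is computed on $X_{\proket}$, where the torsors are already determined. For the vanishing, I would apply Theorem \ref{mainlog} with $i=0$, which gives almost Mittag-Leffler behaviour of $\{\mathcal O^+(U_n)/p^m\}$. Combined with the filtration by $G_{k'}$ and the same precision-loss trick, this yields genuine vanishing of the non-abelian $\lim^1$ for $G_k$.
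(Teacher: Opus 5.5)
There is a genuine gap, at the step you yourself flag as hardest: turning almost comparisons into honest bijections of torsor sets. Your overall architecture is the one the paper has in mind: linearize $G_k$ through its additive graded pieces \cite[Lemma 4.24]{heuer2026gtorsor}, compare abelian cohomology on $X_{\proket}$ and $U_{\proet}$ in degrees $0,1,2$, and run a non-abelian dévissage. But the precision-loss mechanism you propose does not work. Pushing a $G_k$-torsor along the inclusion $G_k\subset G_{k-\epsilon}$ only shows that the induced $G_{k-\epsilon}$-torsor extends, which says nothing about the original $G_k$-torsor. And $x\mapsto\exp(p^{\epsilon}\log x)$ is not a group homomorphism, so it induces no map on $H^1(-,G_k)$ with which to transport classes back. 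This is also not where $k>\alpha$ enters; that hypothesis only provides $\exp/\log$ and the additive description of the graded pieces.

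The missing idea is built into Definition \ref{smallsubgroups}: $G_k$ is the kernel of reduction modulo $p^k\mathfrak m$, not modulo $p^k$. Its graded pieces are therefore $(p^{k'}\mathfrak m\mathcal O^+/p^{k''}\mathfrak m\mathcal O^+)^d\cong\mathfrak m\otimes_{\mathcal O_K}(\mathcal O^+/p^{k''-k'})^d$; you dropped the $\mathfrak m$. Since $\mathfrak m$ is flat and $\mathfrak m=\mathfrak m^2$, the functor $\mathfrak m\otimes_{\mathcal O_K}-$ is exact and kills almost zero modules, so it turns almost isomorphisms into honest ones. On the non-quasi-compact $U$ this functor need not commute with $R\Gamma$, so Corollary \ref{period} alone does not suffice. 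Instead, apply $\mathfrak m\otimes-$ levelwise to the almost pro-isomorphism of Theorem \ref{mainlog}. The pro-systems $\{H^i_{\et}(U_n,\mathfrak m\otimes\mathcal O^+/p^{\delta})\}_n$ then become honestly pro-constant with value $H^i_{\ket}(X,\mathfrak m\otimes\mathcal O^+/p^{\delta})$. Taking $R\lim_n$ gives honest isomorphisms in all degrees. With these, the usual dévissage along the central extensions works at each finite level without any loss of precision: bijections on $H^0$ of twisted automorphism sheaves and on $H^1$, and injectivity on $H^2$ for obstructions.

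In the second part, you have the roles of surjectivity and injectivity reversed. Given compatible classes $[P_n]\in H^1_{\proet}(U_n,G_k)$, any choice of isomorphisms $P_{n+1}|_{U_n}\cong P_n$ is descent data for the cover $\{U_n\}$ of $U$, so surjectivity is automatic. What needs proof is injectivity. The fibre through the class of $P$ is the non-abelian $\lim^1_n\operatorname{Aut}(P|_{U_n})$, so you need its triviality for every inner twist of $G_k$, not just for $G_k(U_n)$. Your justification (``$H^1(U)$ is computed on $X_{\proket}$'') does not address this. Knowing that $P$ and $Q$ extend to $X$ does not show that $P|_{U_n}\cong Q|_{U_n}$ for all $n$ forces $P\cong Q$.

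Your argument for the $\lim^1$-vanishing also never uses that $X$ is affinoid or proper and that the $U_n$ are affinoid. These hypotheses are exactly what is needed, as in the proof of Theorem \ref{riemann}, to interchange $\lim_n$ with the limit over $p^m$ via Propositions \ref{limitcoh} and \ref{limitcohlog}. Theorem \ref{mainlog} only controls the mod-$p^m$ pro-systems, whereas $\operatorname{Aut}(P|_{U_n})$ involves the full limit $G_k=\lim_{k'}G_k/G_{k'}$, i.e.\ $\widehat{\mathcal O}^+$-valued points.
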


\begingroup
\makeatletter
\let\@tocwrite\@gobbletwo
\makeatother
	
	\subsection*{Acknowledgements} The author thanks Ben Heuer for many valuable discussions about integral structure sheaves and the $p$-adic Simpson correspondence, and for suggesting many interesting ideas that greatly improved this article. The author also thanks Lucas Mann for answering a question.

	The author acknowledges funding by the Deutsche Forschungsgemeinschaft (DFG, German Research Foundation) through the Collaborative Research Centre TRR 326 ‘Geometry and Arithmetic of Uniformized Structures’, project number 444845124.

	\subsection*{Declaration of AI usage} This article was proofread with the assistance of ChatGPT 6 Astra. The last step of the proof of Theorem \ref{vanishingalmost}, which passes from almost vanishing for an inverse system of finitely generated $\mathcal O_K$-modules to an inverse system of almost finitely generated $\mathcal O_K$-modules, was suggested by ChatGPT 5.5 Pro. No AI was used beyond these tasks.

	\subsection*{Notation and conventions}
	
	We use adic spaces as in \cite{huber2013etale} and log adic spaces as in \cite{dllz2023logadic}. Rigid analytic varieties over $K$ are assumed to be separated, taut, countable at infinity, and locally of finite type over $\Spa(K,\mathcal O_K)$. All exhaustions are by open subspaces.

	Almost mathematics is taken with respect to $\mathfrak m=p^{1/p^{\infty}}$, and $\simeq^a$ denotes an almost isomorphism. We normalize the valuation by $v(p)=1$ and write $p^r$ for a chosen element of valuation $r\in v(K^\times)$.

	We also freely use the language of log adic spaces. For definitions and properties of log adic spaces, we refer the reader to \cite{dllz2023logadic}. 

\par
\endgroup


	\section{The proof of Theorems \ref{main} and \ref{mainlog}} \label{proofmain}
	
    The aim of this section is to prove Theorems \ref{main} and \ref{mainlog}. First, we note that it suffices to prove them for any chosen sequence $\{U_n\}_{n\in\N}$, by the following simple lemma.
	
	\begin{lem} \label{choice}
		For any adic space $U=\cup_n U_n=\cup_n U_n'$ where $\{U_n\}_{n \in \N}$ and $\{U'_n\}_{n \in \N}$ are two increasing sequences of quasi-compact adic spaces with union $U$, the two ind-systems $\{U_n\}_{n \in \N}$ and $\{U'_n\}_{n \in \N}$ are equivalent. In particular, for any cohomology theory $\rg_*$ and any $i \geq 0$, the pro-systems $\{H^i_{*}(U_n)\}_n$ and $\{H^i_{*}(U'_n)\}_n$ are equivalent.
	\end{lem}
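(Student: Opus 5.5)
The idea is to show that each of the two exhaustions is cofinal in the other, using only quasi-compactness. Then the identity maps on $U$ give mutually inverse morphisms of ind-systems. Applying a contravariant cohomology functor turns this into an equivalence of pro-systems.

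The key step is the following observation. Fix $n$. Since $U'_n$ is quasi-compact and covered by the increasing family of open subspaces $\{U_k\}_{k\in\N}$, finitely many of them cover $U'_n$. Because the family is increasing, a single one suffices: there is $k(n)$ with $U'_n\subseteq U_{k(n)}$. We may choose $k$ to be strictly increasing by replacing $k(n)$ with $\max(k(1),\dots,k(n))+n$, since $U_{k}\subseteq U_{k'}$ for $k\le k'$. Symmetrically, there is a strictly increasing $l$ with $U_n\subseteq U'_{l(n)}$.

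The inclusions $U'_n\hookrightarrow U_{k(n)}$ and $U_n\hookrightarrow U'_{l(n)}$ are compatible with the transition inclusions. They therefore define morphisms of ind-objects
\[
\{U'_n\}\to\{U_n\}, \qquad \{U_n\}\to\{U'_n\}.
\]
The composite $\{U_n\}\to\{U_n\}$ is represented by the inclusions $U_n\subseteq U_{k(l(n))}$, which are exactly the transition maps of the system. Such a composite is the identity in the category of ind-objects, and the same holds for the other composite. Hence the two ind-systems are equivalent.

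For the second claim, apply $H^i_*$, which is contravariant along open immersions via restriction. Restriction is functorial, and all the maps above are inclusions of open subspaces of $U$, so every square commutes. We obtain morphisms of pro-systems $\{H^i_*(U_n)\}\to\{H^i_*(U'_n)\}$ and back, each of whose composites is given by transition maps. So the two pro-systems are isomorphic.

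The only point needing care is the quasi-compactness argument that produces a single $U_{k}$ containing $U'_n$. Here it matters that the $U_k$ are open and increasing, so that a finite subcover reduces to one member. Everything else is formal bookkeeping in the ind- and pro-categories.
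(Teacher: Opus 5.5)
Your proposal is correct and follows the same approach as the paper. Quasi-compactness gives mutual cofinality, $U'_n\subseteq U_{k(n)}$ and $U_n\subseteq U'_{l(n)}$, and the rest is formal; you just write out the ind- and pro-category bookkeeping that the paper leaves implicit with ``Then the lemma follows.''
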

	
	\begin{proof}
		By quasi-compactness, for any $n \geq 0$, there is some $m \geq 0$ such that $U_n \subset U_m'$ and $U'_n \subset U_m$. Then the lemma follows.
	\end{proof}
	
	We also note that, as the proof of \cite[Theorem 5.1]{scholze2013p} shows, it suffices to prove Theorem \ref{mainlog} when $K^+=\mathcal{O}_K.$ After recalling some facts on pro-categories, we start with the case where $X$ is the closed ball, which is the key computation in the proof of Theorems \ref{main} and \ref{mainlog}.
	
	\subsection{Pro-categories}
	
	We recall some basic facts on pro-categories which will be used repeatedly. Let $\mathcal C$ be a category. A pro-object of $\mathcal C$ is a functor $I \to \mathcal C$, where $I$ is a small cofiltered category. We write it as $\{X_i\}_{i \in I}$. If $X=\{X_i\}_{i \in I}$ and $Y=\{Y_j\}_{j \in J}$ are pro-objects, then $$ \operatorname{Hom}_{\text{Pro}(\mathcal C)}(X,Y)=\lim_{j \in J}\varinjlim_{i \in I}\operatorname{Hom}_{\mathcal C}(X_i,Y_j). $$ This defines the category $\text{Pro}(\mathcal C)$. For $X \in \mathcal C$, we denote by $c(X)$ the associated constant pro-object.
	
	\begin{prop}
		Let $\mathcal C$ be an abelian category. Then $\text{Pro}(\mathcal C)$ is abelian, and the constant functor $c:\mathcal C \to \text{Pro}(\mathcal C)$ is exact and fully faithful.
	\end{prop}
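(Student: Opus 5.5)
The plan is to follow the classical argument of Artin–Mazur and Grothendieck. First I show that $\text{Pro}(\mathcal C)$ is additive. Then I reduce kernels and cokernels to level representations, and check the remaining abelian axioms levelwise.

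\emph{Additivity.} Hom sets are abelian groups because they are limits of colimits of abelian groups, and composition is bilinear. The zero object is $c(0)$. For finite biproducts I reindex $X=\{X_i\}_{i\in I}$ and $Y=\{Y_j\}_{j\in J}$ over $I\times J$ and take $\{X_i\oplus Y_j\}_{(i,j)}$.

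\emph{Level representations.} The key step is that any morphism $f\colon X\to Y$ is isomorphic, in the arrow category, to a level morphism $\{f_k\colon X'_k\to Y'_k\}_{k\in K}$ over a single cofiltered index category $K$. I take $K$ to be the category of pairs $(i,j,\phi)$ with $\phi\colon X_i\to Y_j$ representing the component of $f$ at $j$. The morphisms $(i,j,\phi)\to(i',j',\phi')$ are pairs of transition maps compatible with $\phi,\phi'$. One checks that $K$ is cofiltered, that it is small (possibly after passing to a cofinal subcategory), and that the two projections $K\to I$ and $K\to J$ are cofinal. This gives isomorphisms $\{X_i\}\simeq\{X_{i(k)}\}$ and $\{Y_j\}\simeq\{Y_{j(k)}\}$ carrying $f$ to $\{\phi_k\}$. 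The same construction applied to a composable pair shows that finite diagrams without loops can be levelled.

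\emph{Kernels, cokernels, and the abelian axiom.} For a level map $\{f_k\}$ I define $\ker f=\{\ker f_k\}_k$ and $\operatorname{coker} f=\{\operatorname{coker} f_k\}_k$, and verify the universal properties directly from the Hom formula. The kernel case: a map $Z\to X$ killed by $f$ factors, for each $k$, through some $Z_l\to X_k$ whose composite with $f_k$ vanishes. I may need to shrink $l$ further, which is allowed by cofilteredness, and then the map factors through $\ker f_k$. Compatibility in $k$ follows because limits commute with limits. The cokernel case is dual, using that $\varinjlim_i$ of Hom into a cokernel behaves well. With these definitions, coimage and image are computed levelwise by $\{\operatorname{coim} f_k\}\to\{\operatorname{im} f_k\}$. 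This is a levelwise isomorphism, hence an isomorphism in $\text{Pro}(\mathcal C)$, so $\text{Pro}(\mathcal C)$ is abelian.

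\emph{Properties of $c$.} Full faithfulness is immediate: for constant one-point index categories the Hom formula reduces to $\operatorname{Hom}_{\mathcal C}(A,B)$. For exactness, $c$ is additive, and a kernel or cokernel of $c(f)$ is computed by the level representation over the trivial index category, which is $c(\ker f)$ or $c(\operatorname{coker} f)$. So $c$ preserves kernels and cokernels and is exact.

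The main obstacle I expect is the levelling lemma, namely proving that $K$ is cofiltered and that its projections are cofinal. Together with this comes the check that the resulting kernels are independent of the chosen level representation. That independence does follow formally from the universal property once the universal property has been established.
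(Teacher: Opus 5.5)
The paper gives no proof of its own and simply cites this as standard (Kashiwara--Schapira). Your outline is exactly that standard argument, there stated for ind-objects and dualized: level a morphism via the cofiltered category of representatives, take kernels and cokernels levelwise, and check $\operatorname{coim}\to\operatorname{im}$ levelwise; it is correct. The only slip is in the cokernel step, where what you actually use is left exactness of $\operatorname{Hom}_{\mathcal C}(-,W_m)$ (Hom \emph{out of} $\operatorname{coker} f_k$), together with exactness of filtered colimits of abelian groups and $\lim_m$ commuting with kernels, rather than any property of Hom \emph{into} a cokernel.
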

	
	\begin{prop}
		Let $\mathcal C$ be an abelian category. The induced functor $D^+(\mathcal C) \to D^+(\text{Pro}(\mathcal C))$ is fully faithful. Equivalently, for $X,Y \in \mathcal C$ and $m \geq 0$, one has $$ \operatorname{Ext}^m_{\text{Pro}(\mathcal C)}(c(X),c(Y)) \simeq \operatorname{Ext}^m_{\mathcal C}(X,Y). $$
	\end{prop}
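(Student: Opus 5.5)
The statement to prove is the last proposition: for an abelian category $\mathcal C$, the functor $D^+(\mathcal C)\to D^+(\text{Pro}(\mathcal C))$ is fully faithful, i.e.\ $\operatorname{Ext}^m_{\text{Pro}(\mathcal C)}(c(X),c(Y))\simeq\operatorname{Ext}^m_{\mathcal C}(X,Y)$. I would work with Yoneda Ext, which needs no enough injectives or projectives, and use the previous proposition: $\text{Pro}(\mathcal C)$ is abelian and $c$ is exact and fully faithful.

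\textbf{Step 1 (reduction).} The two formulations are equivalent by a standard dévissage. Full faithfulness on $D^+$ reduces, via truncation triangles and the five lemma, to full faithfulness on shifts of objects, which is exactly the Ext comparison. So it suffices to prove that the natural map $\operatorname{Ext}^m_{\mathcal C}(X,Y)\to\operatorname{Ext}^m_{\text{Pro}(\mathcal C)}(cX,cY)$ is bijective for all $m$. This map sends a Yoneda extension to its image under the exact functor $c$. For $m=0$ it is bijective by full faithfulness of $c$.

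\textbf{Step 2 (effaceability criterion).} I would use the following criterion. Let $A\subset B$ be an exact, extension-closed full subcategory of abelian categories. Suppose every morphism in $B$ from an object of $A$ onto, or every epimorphism in $B$ onto an object of $A$, can be refined by $A$-objects. Then Yoneda Ext agrees. Concretely, I would prove by induction on $m$ that every $m$-extension $0\to cY\to E_1\to\cdots\to E_m\to cX\to 0$ in $\text{Pro}(\mathcal C)$ is Yoneda equivalent to one with all $E_i$ constant, and likewise for equivalences between such extensions. The key lemma is the following. If $E\to cX$ is an epimorphism in $\text{Pro}(\mathcal C)$ with $E=\{E_i\}$, then, after reindexing by level representation, some component $E_i\to X$ is an epimorphism in $\mathcal C$ and factors the map. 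This holds because an epimorphism onto a constant object is represented levelwise by epimorphisms onto $X$ after passing to a cofinal subsystem. Pulling back, a map $cE_i\to E$ exists over $cX$. A dual statement holds for monomorphisms $cY\to E$: there is a map $E\to cE_j$ for some $j$ making $cY\to cE_j$ monic, since a monomorphism out of a constant object can be detected at a finite level by cofilteredness.

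\textbf{Step 3 (induction).} For $m=1$, take an extension $0\to cY\to E\to cX\to 0$. Represent the quotient map at a level $E_i\to X$. The kernel at that level receives $Y$ only up to passing to the pro-system, so I would instead use the pushout along $E\to cE_j$ from the monomorphism lemma. This gives a constant extension $0\to Y\to E'\to X\to 0$ mapping to the original one, which shows surjectivity. Injectivity follows because a splitting of $cE'$ in $\text{Pro}(\mathcal C)$ is a morphism between constant objects, hence comes from $\mathcal C$. For higher $m$, I would split the long extension as a Yoneda product of short extensions through intermediate objects $Z_k\in\text{Pro}(\mathcal C)$. Then I would replace each $Z_k$ step by step by constant objects, using the two lemmas and the pullback and pushout operations, which preserve Yoneda classes.

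\textbf{Main obstacle.} I expect the main obstacle to be injectivity for $m\ge2$. There one must show that two constant $m$-extensions that become equivalent through a zigzag of non-constant pro-extensions are already equivalent in $\mathcal C$. I would handle this by representing the whole zigzag, which is a finite diagram, levelwise. Any finite diagram in $\text{Pro}(\mathcal C)$ can be represented by a cofiltered system of diagrams in $\mathcal C$ (level representation). Exactness in $\text{Pro}(\mathcal C)$ is then essentially levelwise, up to pro-zero kernel and cokernel terms. Choosing a sufficiently deep level and composing with the transition maps to kill the pro-zero defects should give a zigzag in $\mathcal C$.
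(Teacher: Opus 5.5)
Your overall strategy (Yoneda Ext, effacing by constant objects, induction on $m$) is the right kind of argument. Your monomorphism lemma and the case $m=1$ are essentially fine: the map $E\to cE_j$ gives a morphism from $0\to cY\to E\to cX\to 0$ to $0\to cY\to cE_j\to c(E_j/Y)\to 0$. So $E$ is the pullback of a constant extension along a constant map. The argument for $m\geq 2$, however, has two genuine gaps.

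\textbf{The epimorphism half of your key lemma is false in $\text{Pro}(\mathcal C)$.} If $E=\{E_i\}\to cX$ is an epimorphism, then $E_i\to X$ is indeed an epimorphism at every sufficiently deep level. But this only gives a factorization $E\to cE_i\to cX$. A morphism $cZ\to E$ is a compatible family of maps $Z\to E_i$ for \emph{all} $i$, and none need exist. For instance, in $\text{Pro}(\text{Ab})$ take $E_n=\Z$ for $n\geq 0$, with all transition maps multiplication by $p$. Map $E$ to $c(\Z/\ell)$, with $\ell\neq p$ prime, by reduction on $E_0$. Every level map $x\mapsto p^n x \bmod \ell$ is surjective, so this is an epimorphism. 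Yet $\operatorname{Hom}(cZ,E)=0$ for every $Z$, since a compatible family takes values in $\bigcap_n p^n\Z=0$. Domination of epimorphisms by constant objects is an Ind-phenomenon. So every step of your induction that uses ``the two lemmas'' to modify the $cX$-end of an extension is unfounded.

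\textbf{Your injectivity argument for $m\geq 2$ does not work as described.} Exactness in $\text{Pro}(\mathcal C)$ of a level representation does not make any individual level exact. Pro-zero kernels and cokernels are only killed after composing with transition maps, and that changes the objects. So choosing a deep level does not produce exact sequences in $\mathcal C$, let alone a Yoneda zigzag between them.

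\textbf{How to repair it.} Both issues disappear if you use only the (correct) monomorphism lemma, together with the long exact Yoneda Ext sequences in the second variable; $c$ respects these because it is exact. Induct on $m$, the case $m=0$ being full faithfulness of $c$.

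For surjectivity, represent $\xi\in\operatorname{Ext}^m_{\text{Pro}(\mathcal C)}(cX,cY)$ by $0\to cY\to E_1\to\cdots$, and choose $E_1\to cZ$ with $Y\to Z$ a monomorphism. Then $\xi$ dies in $\operatorname{Ext}^m(cX,cZ)$, so $\xi=\delta(\eta)$ with $\eta\in\operatorname{Ext}^{m-1}_{\text{Pro}(\mathcal C)}(cX,c(Z/Y))$. By induction, $\eta$ comes from $\mathcal C$.

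For injectivity, suppose $\zeta\in\operatorname{Ext}^m_{\mathcal C}(X,Y)$ dies in $\text{Pro}(\mathcal C)$. Efface it in $\mathcal C$ by a monomorphism $Y\to Z$ (the first term of a representative), and write $\zeta=\delta(\eta)$ with $\eta\in\operatorname{Ext}^{m-1}_{\mathcal C}(X,Z/Y)$. Exactness in $\text{Pro}(\mathcal C)$ and bijectivity in degree $m-1$ show that $\eta$ lies in the image of $\operatorname{Ext}^{m-1}_{\mathcal C}(X,Z)$. Hence $\zeta=0$.

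No zigzags are needed; this is the standard effaceability argument, whereas the paper itself gives no proof and only cites Kashiwara--Schapira.

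Finally, truncation triangles and the five lemma only reduce full faithfulness on $D^b$ to the Ext statement. For complexes in $D^+$ that are unbounded above, you should apply the monomorphism lemma at the level of complexes, which is the standard criterion for exact full embeddings. The paper itself only ever uses the Ext form.
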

	
	These facts are standard; see for instance \cite[Theorem 15.3.1]{kashiwara2005categories}.
	
	\begin{rem} \label{proversuslimit}
		Being a pro-isomorphism is in general stronger than becoming an isomorphism after applying $R\lim$. For example, the natural morphism $c(\mathbb Z_p)\to \{\mathbb Z/p^n\}_{n \geq 1}$ induces an isomorphism after $R\lim$, since both derived inverse limits are $\mathbb Z_p$. However, it is not a pro-isomorphism: $\operatorname{Hom}_{\text{Pro}(\text{Ab})}(\{\mathbb Z/p^n\}_n,c(\mathbb Z_p))=\varinjlim_n \operatorname{Hom}(\mathbb Z/p^n,\mathbb Z_p)=0$.
	\end{rem}
	
	The following lemma, which shows the compatibility of constant pro-objects and spectral sequences, will be used repeatedly in the proof.
	
	\begin{lem} \label{prospectral}
		Let $\mathcal C$ be an abelian category, and let $\{M_n^\bullet\}_{n \in \mathbb N}$ be a pro-system of $\mathbb Z$-graded objects of $\mathcal C$. Fix an integer $a \geq 1$. Suppose that, for every $n \in \mathbb N$, there is a first-quadrant spectral sequence in $\mathcal C$, $$ E_{n,a}^{i,j} \Rightarrow M_n^{i+j}, $$ and that these spectral sequences, together with their abutment filtrations, are compatible with the transition morphisms of the pro-system. If, for every $i,j$, the pro-object $\{E_{n,a}^{i,j}\}_{n \in \mathbb N}$ is pro-isomorphic to a constant pro-object $c(E_a^{i,j})$, then for every $k$, the pro-object $\{M_n^k\}_{n \in \mathbb N}$ is pro-isomorphic to a constant pro-object $c(M^k)$. Moreover, the spectral sequence in $\mathcal C$ with $a$-page $E_a^{i,j}$ converges to $M^k$, and its image under $c$ is pro-isomorphic to the given pro-system of spectral sequences.
	\end{lem}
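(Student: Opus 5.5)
The plan is to work entirely inside the abelian category $\text{Pro}(\mathcal C)$. The hypotheses on compatibility let us assemble the given data into a single spectral sequence there. By exactness of $c$ and full faithfulness, we then transport everything back to $\mathcal C$.

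Concretely, the system $\{E_{n,a}^{i,j}\}_n$ with its differentials $d_r$ is a spectral sequence in $\text{Pro}(\mathcal C)$. Indeed, level-wise formation of kernels, cokernels and cohomology computes them in $\text{Pro}(\mathcal C)$ for pro-systems indexed by the same category. It converges to the pro-object $\{M_n^k\}_n$, carrying the filtration $\{F^pM_n^k\}_n$ with graded pieces $\{E_{n,\infty}^{p,k-p}\}_n$. Since the spectral sequences are first quadrant, only finitely many pages are nontrivial in each bidegree. Each $E_\infty^{i,j}$ is reached at a uniform page, namely $r > \max(i,j+1)$, independent of $n$, and the filtration on $M^k_n$ has length at most $k+1$ uniformly.

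The first step is to show that each page $\{E_{n,r}^{i,j}\}_n$ is pro-isomorphic to a constant object $c(E_r^{i,j})$, with the differentials coming from morphisms in $\mathcal C$. We argue by induction on $r \ge a$. The differential $\{d_{n,r}\}$ is a morphism between pro-objects isomorphic to constant ones. By full faithfulness of $c$ it is $c(d_r)$ for a unique $d_r$ in $\mathcal C$, with $d_r^2=0$ again by faithfulness. By exactness of $c$, the cohomology of $c(d_r)$ is $c$ of the cohomology of $d_r$. This gives $E_{r+1}$ constant and produces the spectral sequence $E_a^{i,j}$ in $\mathcal C$. In particular $\{E_{n,\infty}^{i,j}\}_n \cong c(E_\infty^{i,j})$.

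The second step, which I expect to be the main obstacle, is to show that $\{M_n^k\}_n$ itself is constant. The pro-object is filtered by finitely many steps whose graded pieces are constant. So it suffices to know that the essential image of $c$ is closed under extensions in $\text{Pro}(\mathcal C)$. Suppose $0\to c(A)\to P\to c(B)\to 0$ is exact. Its class lies in $\operatorname{Ext}^1_{\text{Pro}(\mathcal C)}(c(B),c(A))\cong\operatorname{Ext}^1_{\mathcal C}(B,A)$, by the second proposition quoted above. Hence it comes from an extension $0\to A\to E\to B\to 0$ in $\mathcal C$. Then $P\cong c(E)$, since extensions with isomorphic Yoneda classes have isomorphic middle terms.

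Induction on the length of the filtration $F^\bullet$ then shows that each $\{F^pM_n^k\}_n$ is pro-isomorphic to a constant $c(F^pM^k)$. The inclusions descend to $\mathcal C$ by full faithfulness and remain monomorphisms because $c$ is exact and faithful. This yields $M^k$ with a filtration whose graded pieces are $E_\infty^{p,k-p}$. That is precisely the convergence of the spectral sequence $E_a\Rightarrow M^k$ in $\mathcal C$, and by construction its image under $c$ is pro-isomorphic to the given pro-system of spectral sequences. Some care is needed only to make the identifications compatible: the isomorphisms $F^p/F^{p+1}\cong E_\infty$ in $\text{Pro}(\mathcal C)$ must be chosen to match those descended from the pages. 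Full faithfulness handles this.
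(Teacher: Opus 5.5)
Your proposal is correct and follows essentially the same argument as the paper. Both proofs make all pages constant by induction, using full faithfulness and exactness of $c$ together with the fact that levelwise cohomology computes cohomology in the pro-category. Both get uniform stabilization from the first-quadrant condition; note that the stabilizing page must also be at least $a$, so the paper uses $\max\{a,k+2\}$. Finally, both conclude by closing constant pro-objects under extensions via the $\operatorname{Ext}^1$ comparison, applied along the finite abutment filtration.
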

	
	\begin{proof}
		We regard each page $\{E_{n,r}^{i,j}\}_{n \in \mathbb N}$ as an object of $\text{Pro}(\mathcal C)$. By assumption, the $a$-page is constant: $ \{E_{n,a}^{i,j}\}_{n \in \mathbb N} \simeq c(E_a^{i,j}). $ Under these identifications, the differential on the $a$-page becomes a morphism $c(E_a^{i,j})\to c(E_a^{i+a,j-a+1})$. Since $c$ is fully faithful, it is uniquely of the form $c(d_a^{i,j})$ for a morphism $d_a^{i,j}:E_a^{i,j}\to E_a^{i+a,j-a+1}$ in $\mathcal C$; the identity $d_a^2=0$ follows from the faithfulness of $c$. Since $c$ is exact, passing to cohomology shows that the $(a+1)$-page is again constant. Inductively, for every $r \geq a$ and every $i,j$, one has $ \{E_{n,r}^{i,j}\}_{n \in \mathbb N} \simeq c(E_r^{i,j}), $ and the $E_r^{i,j}$ form a spectral sequence in $\mathcal C$.
		
		Fix a total degree $k$. Since the spectral sequences are first-quadrant, only the terms $E_{n,r}^{p,k-p}$ with $0 \leq p \leq k$ can contribute to $M_n^k$. Moreover, for fixed $(p,q)$, the incoming differential to $E_{n,r}^{p,q}$ is zero for $r>p$, and the outgoing differential is zero for $r>q+1$. Hence, for $p+q=k$, the term $E_{n,r}^{p,q}$ has stabilized by the page $r=\max\{a,k+2\}$, uniformly in $n$. Therefore $ \{E_{n,\infty}^{p,k-p}\}_{n \in \mathbb N} \simeq c(E_\infty^{p,k-p}) $ for every $0 \leq p \leq k$.
		
		By compatibility of the abutment filtrations, the pro-object $\{M_n^k\}_{n \in \mathbb N}$ has a finite filtration in $\text{Pro}(\mathcal C)$ whose graded pieces are the constant pro-objects $c(E_\infty^{p,k-p})$. By the derived full faithfulness recalled above, extensions of constant pro-objects come from extensions in $\mathcal C$; hence finite iterated extensions of constant pro-objects are constant. It follows that there exists $M^k \in \mathcal C$ such that $ \{M_n^k\}_{n \in \mathbb N} \simeq c(M^k). $ The induced filtration on $M^k$ has graded pieces $E_\infty^{p,k-p}$, so the spectral sequence in $\mathcal C$ with $a$-page $E_a^{i,j}$ converges to $M^k$. Applying $c$ recovers the original pro-system of spectral sequences up to pro-isomorphism.
	\end{proof}
	
	\subsection{The key computation} \label{disc}
	
	In this subsection, we consider the closed ball
	$$X=\mathbb B^d:=\Spa(R,R^{\circ}):=\Spa(K\langle T_1,...,T_d\rangle, \mathcal O_K\langle T_1,...,T_d\rangle),$$
	with $D$ the closed subset defined by $T_1\cdots T_d=0.$ In this case, we can describe the transition map $H^i_{\et}(U_{n+k},\mathcal O^+/p^m) \to H^i_{\et}(U_n,\mathcal O^+/p^m)$ very explicitly.
	
	We endow $X$ with the log structure given by the divisor $D$. Equivalently, $X$ is endowed with the log structure associated to the pre-log structure $\N^d \to K\langle \N^d\rangle =K\langle T_1,...,T_d\rangle$, where we choose a basis $\{e_1,...,e_d\}$ of $\N^d$ and send $e_i$ to $T_i$. The log adic space $X$ admits a pro-Kummer \'etale cover as follows. For each $m \in \Z_{>0},$ let $\dfrac{1}{m}\N^d$ be the toric monoid such that $\N^d \hookrightarrow \dfrac{1}{m}\N^d$ can be identified with the multiplication map $[m]:\N^d\to \N^d.$ Let ${\Q^d_{\geq 0}}:=\colim_m \left(\dfrac{1}{m}\N^d\right)$, so that $\left(\Q^d_{\geq 0}\right)^{\operatorname{gp}}=\Q^d.$ Set $$\mathbb B_{m}:=\Spa\left(K\left\langle\dfrac{1}{m}\N^d\right\rangle,\mathcal{O}_K\left\langle\dfrac{1}{m}\N^d\right\rangle\right),$$ equipped with the log structure induced by $\dfrac{1}{m}\N^d \to K\langle\dfrac{1}{m}\N^d\rangle.$ The morphism $\mathbb B_m \to X$ is a finite Kummer \'etale cover with Galois group $$\Gamma_m=\operatorname{Hom}(\Z^d,\boldsymbol{\mu}_m)\simeq \boldsymbol{\mu}_m^d,$$ where $\boldsymbol{\mu}_m$ is the group of $m$-th roots of unity in $K$. Denote the log affinoid perfectoid object by $$\widetilde{\mathbb B}:=\lim_m \mathbb B_{m} \in X_{\proket},$$ where for $m|m'$, the transition maps are induced by $\dfrac{1}{m}\N^d \hookrightarrow \dfrac{1}{m'}\N^d.$ Then we have the associated log affinoid perfectoid space $$\widehat{\widetilde{\mathbb B}}:=\Spa(K\langle {\Q^d_{\geq 0}}\rangle,\mathcal{O}_K\langle {\Q^d_{\geq 0}}\rangle).$$ The morphism $\widehat{\widetilde{\mathbb B}} \to X$ is a Galois pro-finite Kummer \'etale cover with Galois group $$\Delta=\operatorname{Hom}({\Q^d}/\Z^d,\boldsymbol{\mu}_{\infty})\simeq \widehat{\Z}^d,$$ where $\boldsymbol{\mu}_{\infty}:=\cup_{m\in \N} \boldsymbol{\mu}_{m}.$ The natural action of the Galois group $\Delta$ on $K\langle {\Q^d_{\geq 0}}\rangle$ is given by $$\gamma(T^a)=\gamma(a)T^a$$ for all $\gamma \in \Delta$ and $a \in {\Q^d_{\geq 0}},$ where $T^a$ is the element corresponding to $a$ in $K\langle {\Q^d_{\geq 0}}\rangle$.

	Set $$\widetilde{X}:=\Spa(R_{\infty},R_{\infty}^{\circ}):=\Spa(K\langle T_1^{1/p^{\infty}},...,T_d^{1/p^{\infty}}\rangle, \mathcal O_K\langle T_1^{1/p^{\infty}},...,T_d^{1/p^{\infty}}\rangle).$$ Then $\widetilde{X}\to X$ is a Galois pro-finite Kummer \'etale cover\footnote{This is not a log affinoid perfectoid space according to the definition in \cite[Definition 5.3.1]{dllz2023logadic}, but we will see that it suffices for computing Kummer \'etale cohomology.} with Galois group $\Gamma:=\Z_p^d.$
	
	For any $n \geq 0$, set $$U_n:=\Spa(R_n,R_n^{\circ}):=\Spa(K\langle T_1,...,T_d,p^n/(T_1\cdots T_d)\rangle, \mathcal O_K\langle T_1,...,T_d,p^n/(T_1\cdots T_d)\rangle).$$
	According to Lemma \ref{choice}, it suffices to prove Theorem \ref{mainlog} for our chosen $U_n$. Denote by $\widetilde{U}_n:=\Spa(R_{n,\infty},R_{n,\infty}^{\circ})$ the pullback of $U_n$ along $\widetilde{X} \to X$. Then for $n\geq 0$ the map $\widetilde{U}_n \to U_n$ is a Galois pro-finite \'etale cover with Galois group $\Gamma.$ Since $\widetilde{U}_n$ is an affinoid perfectoid space, the Cartan--Leray spectral sequence gives
	\begin{equation} \label{groupcoh}
	\rg_{\et}(U,\mathcal O^+/p^m) \simeq R\lim_n \rg_{\et}(U_n,\mathcal O^+/p^m) \simeq^a R\lim_n \rg_{\cts}(\Gamma,R_{n,\infty}^{\circ}/p^m).
	\end{equation}
	We prove that for any $i \geq 0,$ the pro-system $\{H^i_{\cts}(\Gamma,R_{n,\infty}^{\circ}/p^m)\}_{n\in \N}$ is almost isomorphic to the constant pro-system with value $H^i_{\cts}(\Gamma,R_{\infty}^{\circ}/p^m)$ (as an $R^{\circ}/p^m$-module).
	
	For $n \geq 1$, we can decompose $R_{n,\infty}^{\circ}/p^m$ as an $\mathcal O_K/p^m$-module as follows:
	\begin{align*}
		R_{n,\infty}^{\circ}/p^m \simeq^a& \mathcal O_K/p^m\left[T_1^{1/p^{\infty}},...,T_d^{1/p^{\infty}},\left(\dfrac{p^n}{T_1T_2\cdots T_d}\right)^{1/p^{\infty}} \right]	\\
		\simeq & R_{\infty}^{\circ}/p^m \bigoplus \left(\bigoplus_{\mathbf{w}} \mathcal O_K/p^m \cdot \dfrac{p^{n\cdot \max(\mathbf{w})}}{\underline{T}^{\mathbf{w}}}\right).
	\end{align*}
	Here $\mathbf{w}:=(w_1,...,w_d)\in \Z[1/p]^d$ ranges over the vectors such that at least one of $w_1,...,w_d$ is positive, and $\max(\mathbf{w}):=\max\{w_1,...,w_d\}$. For simplicity, we write $R_{n,\mathbf{w}}:=\mathcal O_K/p^m \cdot \dfrac{p^{n\cdot \max(\mathbf{w})}}{\underline{T}^{\mathbf{w}}}$. Then $$H^i_{\cts}(\Gamma,R_{n,\infty}^{\circ}/p^m)\simeq^a H^i_{\cts}(\Gamma,R_{\infty}^{\circ}/p^m) \bigoplus \left(\bigoplus_{\mathbf{w}} H^i_{\cts}(\Gamma,R_{n,\mathbf{w}})\right).$$
	
	Fix $m\geq 1$ and set $N:=\max\{m,2\}$. We claim that for every such $\mathbf{w}$, every $n\geq 1$ and every $i\geq 0$, the transition map $$H^i_{\cts}(\Gamma,R_{n+N,\mathbf{w}}) \longrightarrow H^i_{\cts}(\Gamma,R_{n,\mathbf{w}})$$ is zero.
	In fact, put $a:=\max(\mathbf{w})>0$. Identifying the rank-one modules $R_{n+N,\mathbf{w}}$ and $R_{n,\mathbf{w}}$ by their displayed bases gives a $\Gamma$-equivariant isomorphism, under which the transition map is multiplication by a scalar of valuation $Na$. If $a\geq 1$, then $Na\geq m$, so the map is already zero on the coefficient modules.
	If $0<a<1$, choose $k$ such that $w_k=a$ and write $a=r/p^s$, where $s\geq 1$ and $(r,p)=1$. The $k$-th basis vector $\gamma_k\in\Gamma$ acts on $R_{n,\mathbf{w}}$ by a primitive $p^s$-th root of unity $\zeta$. Since continuous group cohomology is computed by the Koszul complex, multiplication by $\zeta-1$ annihilates $H^i_{\cts}(\Gamma,R_{n,\mathbf{w}})$ for every $i\geq 0$. Moreover,
	$$v(\zeta-1)=\frac{1}{p^{s-1}(p-1)}\leq\frac{2}{p^s}\leq 2a\leq Na.$$
	Thus the scalar defining the transition map is divisible by $\zeta-1$ in $\mathcal O_K$, so the induced map on cohomology is zero. This proves the claim. Since $N$ is independent of $\mathbf{w}$, the pro-system of direct sums of these additional weight contributions is zero.
	
	This shows that for any $i \geq 0,$ the pro-system $\{H^i_{\cts}(\Gamma,R_{n,\infty}^{\circ}/p^m)\}_{n\in \N}$ is almost isomorphic to the constant pro-system with value $H^i_{\cts}(\Gamma,R_{\infty}^{\circ}/p^m)$ (as an $R^{\circ}/p^m$-module). We have $$\rg_{\ket}(X,\mathcal O^+/p^m) \simeq^a \rg_{\cts}(\Delta, \mathcal{O}_K\langle \Q^d_{\geq 0}\rangle/p^m) \simeq^a \rg_{\cts}(\Gamma, R_{\infty}^{\circ}/p^m),$$ where the first almost quasi-isomorphism follows from the Cartan--Leray spectral sequence, and the second almost quasi-isomorphism follows from \cite[Lemma 3.10]{olsson2009faltings}. This completes the proof of Theorem \ref{mainlog} in the case where $X$ is the closed ball, and $D$ is defined by $T_1\cdots T_d=0$. Then Theorem \ref{main} follows by Remark \ref{remequiv}.
	
	\subsection{The case of tubular neighborhoods}
	
	In this subsection, we prove the case where $X:=\Spa(A,A^{\circ})\times_K \mathbb B^d=\Spa(A\langle T_1,...,T_d\rangle, A^{\circ}\langle T_1,...,T_d\rangle)$ for some smooth Tate algebra $A$ over $K$, and $D$ is again the closed subset of $X$ defined by $T_1\cdots T_d=0.$ As before, we endow $X$ with the log structure given by the divisor $D$. We keep the same notation for $(R,R^{\circ}),(R_{\infty},R_{\infty}^{\circ}),(R_{n},R_{n}^{\circ})$ and $(R_{n,\infty},R_{n,\infty}^{\circ})$ as in the previous subsection. Set
	\begin{equation} \label{increasingopen}
		U_n:=\Spa(A,A^{\circ}) \times_K \Spa(R_{n},R_{n}^{\circ}).
	\end{equation}
	By Lemma \ref{choice}, it suffices to prove Theorems \ref{main} and \ref{mainlog} for $U_n$.
	
	We first consider the case where there exists an \'etale map $$\Spa(A,A^{\circ}) \to \mathbb T^{d'}:=\Spa(K\langle X_1^{\pm 1},...,X_{d'}^{\pm 1}\rangle, \mathcal O_K\langle X_1^{\pm 1},...,X_{d'}^{\pm 1}\rangle)$$ which can be written as a composition of rational subsets and finite \'etale maps. Denote by $\Spa(A_n,A^{\circ}_n)$ the pullback of $\Spa(A,A^{\circ})$ along $$\mathbb T^{d'}_n:=\Spa(K\langle X_1^{\pm 1/p^n},...,X_{d'}^{\pm 1/p^n}\rangle, \mathcal O_K\langle X_1^{\pm 1/p^n},...,X_{d'}^{\pm 1/p^n}\rangle) \to \mathbb T^{d'},$$ and by $(A_{\infty},A^{\circ}_{\infty})$ the $p$-adic completion of the direct limit of $(A_n,A^{\circ}_n).$ Then $\Spa(A_{\infty},A^{\circ}_{\infty})\to \Spa(A,A^{\circ})$ is a Galois pro-finite \'etale cover with Galois group $\Gamma':=\Z_p^{d'}.$
	
	In this case, $\widetilde{X}:=\Spa(A_{\infty}\widehat{\otimes}_{K}R_{\infty},A_{\infty}^{\circ}\widehat{\otimes}_{\mathcal{O}_K}R_{\infty}^{\circ}) \to X$ is a Galois Kummer pro-finite \'etale cover with Galois group $\Gamma \times \Gamma'.$ Set $$\widetilde{U}_n:=\Spa(A_{\infty},A^{\circ}_{\infty}) \times_K  \Spa(R_{n,\infty},R_{n,\infty}^{\circ}) = \Spa(A_{\infty}\widehat{\otimes}_{K}R_{n,\infty},A_{\infty}^{\circ}\widehat{\otimes}_{\mathcal{O}_K}R_{n,\infty}^{\circ}).$$ Then for $n\geq 0$ the map $\widetilde{U}_n \to U_n$ is a Galois pro-finite \'etale cover with Galois group $\Gamma\times \Gamma'.$ By applying the Cartan--Leray spectral sequence, we have
	$$\rg_{\et}(U,\mathcal O^+/p^m) \simeq R\lim_n \rg_{\et}(U_n,\mathcal O^+/p^m) \simeq^a R\lim_n \rg_{\cts}(\Gamma\times \Gamma',R_{n,\infty}^{\circ}/p^m \otimes_{\mathcal O_K/p^m}A^{\circ}_{\infty}/p^m).$$
	On the other hand, for $i,j\geq 0$ and $n \geq 0$, we have the spectral sequence computing continuous group cohomology:
	$$E_{n,2}^{i,j}:=H^i_{\cts}(\Gamma', H^j_{\cts}(\Gamma,R_{n,\infty}^{\circ}/p^m \otimes_{\mathcal O_K/p^m}A^{\circ}_{\infty}/p^m)) \Rightarrow H^{i+j}_{\cts}(\Gamma\times \Gamma',R_{n,\infty}^{\circ}/p^m \otimes_{\mathcal O_K/p^m}A^{\circ}_{\infty}/p^m).$$
	As $\Gamma$ acts trivially on $A^{\circ}_{\infty}/p^m$, we have $$H^i_{\cts}(\Gamma', H^j_{\cts}(\Gamma,R_{n,\infty}^{\circ}/p^m \otimes_{\mathcal O_K/p^m}A^{\circ}_{\infty}/p^m))\simeq H^i_{\cts}(\Gamma', H^j_{\cts}(\Gamma,R_{n,\infty}^{\circ}/p^m )\otimes_{\mathcal O_K/p^m}A^{\circ}_{\infty}/p^m).$$
	According to our calculation in the previous subsection, the pro-system $\{H^j_{\cts}(\Gamma,R_{n,\infty}^{\circ}/p^m )\}_{n\in \N}$ is almost isomorphic to the constant pro-system with value $H^j_{\cts}(\Gamma,R_{\infty}^{\circ}/p^m )$. Therefore, for each $i,j\geq 0$,  the pro-system $\{E_{n,2}^{i,j}\}_{n\in \N}$ is almost isomorphic to the constant pro-system with value $$E_{2}^{i,j}:=H^i_{\cts}(\Gamma', H^j_{\cts}(\Gamma,R_{\infty}^{\circ}/p^m )\otimes_{\mathcal O_K/p^m}A^{\circ}_{\infty}/p^m)\simeq H^i_{\cts}(\Gamma', H^j_{\cts}(\Gamma,R_{\infty}^{\circ}/p^m \otimes_{\mathcal O_K/p^m}A^{\circ}_{\infty}/p^m)).$$
	Since we have $$E_{2}^{i,j} \Rightarrow H^{i+j}_{\cts}(\Gamma\times \Gamma',R_{\infty}^{\circ}/p^m \otimes_{\mathcal O_K/p^m}A^{\circ}_{\infty}/p^m) \simeq^a H^{i+j}_{\ket}(X,\mathcal O^+/p^m),$$ by Lemma \ref{prospectral}, we have that for each $i\geq 0$ and $m \geq 1$, the pro-system $\{H^{i}_{\cts}(\Gamma\times \Gamma',R_{n,\infty}^{\circ}/p^m \otimes_{\mathcal O_K/p^m}A^{\circ}_{\infty}/p^m)\}_{n\in \N}$ is almost isomorphic to the constant pro-system with value $H^{i}_{\cts}(\Gamma\times \Gamma',R_{\infty}^{\circ}/p^m \otimes_{\mathcal O_K/p^m}A^{\circ}_{\infty}/p^m)$, which means that the pro-system $\{H^i_{\et}(U_n,\mathcal O^+/p^m)\}_{n\in \N}$ is almost isomorphic to the constant pro-system with value $H^{i}_{\ket}(X,\mathcal O^+/p^m).$
	
	In general, if $A$ is a smooth Tate algebra over $K$, there exists an open cover of $\Spa(A,A^{\circ})$ by $\{\Spa(A_s,A_s^{\circ})\}_{s\in S}$ for a finite index set $S$, and there exists some $d' \in \N$ such that for each $s \in S$, there exists an \'etale map $$\Spa(A_s,A_s^{\circ}) \to \mathbb T^{d'}:=\Spa(K\langle X_1^{\pm 1},...,X_{d'}^{\pm 1}\rangle, \mathcal O_K\langle X_1^{\pm 1},...,X_{d'}^{\pm 1}\rangle)$$ which can be written as a composition of rational subsets and finite \'etale maps. We set $X_s:=\Spa(A_s\langle T_1,...,T_d\rangle, A^{\circ}_s\langle T_1,...,T_d\rangle), U_{s,n}:=\Spa(A_s,A_s^{\circ}) \times_K \Spa(R_{n},R_{n}^{\circ})$ for $s\in S,n \geq 0$. Let $D_s \subset X_s$ be the closed subset defined by $T_1\cdots T_d=0$ and $U_s:=X_s-D_s$. We endow $X_s$ with the log structure given by the divisor $D_s$: this is the same as the pullback of the log structure on $X$ along $X_s \to X$.
	
	Let $X^{\bullet} \to X$ (resp. $U_n^{\bullet} \to U_n$, resp. $U^{\bullet} \to U$) be the \v Cech nerve associated to the covering $\{X_s\}_{s\in S}$ (resp. $\{U_{s,n}\}_{s\in S}$, resp. $\{U_{s}\}_{s\in S}$). Then for each $n \geq 0$ the \v Cech-to-cohomology spectral sequence gives $$E_{n,1}^{i,j}:=H^j_{\et}(U_n^i,\mathcal O^+/p^m) \Rightarrow H^{i+j}_{\et}(U_n,\mathcal O^+/p^m).$$ 
	According to our previous discussion, for each $i,j \geq 0$, the pro-system $\{H^j_{\et}(U_n^i,\mathcal O^+/p^m)\}_{n\in \N}$ is almost isomorphic to the constant pro-system with value $H^j_{\ket}(X^i,\mathcal O^+/p^m)$. Since we have (again by the \v Cech-to-cohomology spectral sequence) $$E_1^{i,j}:=H^j_{\ket}(X^i,\mathcal O^+/p^m)\Rightarrow H^{i+j}_{\ket}(X,\mathcal O^+/p^m),$$ by Lemma \ref{prospectral} we have that for each $i \geq 0,$ the pro-system $\{H^i_{\et}(U_n,\mathcal O^+/p^m)\}_{n\in \N}$ is almost isomorphic to the constant pro-system with value $H^i_{\ket}(X,\mathcal O^+/p^m)$.
	This completes the proof of Theorems \ref{main} and \ref{mainlog} in the case where $X:=\Spa(A,A^{\circ})\times_K \mathbb B^d$ for some smooth Tate algebra $A$ over $K$, and $D$ is the closed subset defined by $T_1\cdots T_d=0.$
	
	\subsection{The general case} \label{descent}
	
	Now let $X$ be an fs log adic space that is log smooth over $\Spa(K,K^+)$, and let $U:=X_{\operatorname{tr}}$ be the maximal open of $X$ such that the log structure on $U$ induced by $X$ is trivial. As in our proof in the previous subsection (using the \v Cech-to-cohomology spectral sequence), we can prove Theorem \ref{mainlog} \'etale locally (with induced log structure). By resolution of toric singularities \cite[Theorem 1.2]{xslogdescent} and log \'etale descent \cite[Theorem 1.1]{xslogdescent}, we may assume that $X$ is smooth and that $D:=X-U$ is a strict normal crossing divisor\footnote{By our discussion, one can in fact assume that there exists a strictly \'etale map $X \to \mathbb B^d$ which can be written as a composition of rational subsets and finite \'etale maps. However, it seems that this additional assumption does not make the proof any easier.}. Let $ \{U_n\}_{n\in\N}$ be an increasing sequence of quasi-compact adic spaces over $K$ such that $U=\bigcup_n U_n.$
	
	Recall the following theorem of Kiehl \cite[Theorem 1.18]{kiehl1967derham} (see also \cite[Theorem 2.11]{mitsui2009criterion}), which shows the existence of tubular neighborhoods.
	
	\begin{thm} \label{tubular}
		If $D \subset X$ is a strict normal crossing divisor in a smooth rigid space, then for any (adic) point $x$ in $X$ contained in exactly $r$ irreducible components $D_1, \dots, D_r$ of $D$ (we allow $r=0$), we can find a small open affinoid $V \subset X$ containing $x$ together with a smooth affinoid $\Spa(A,A^{\circ})$ and an isomorphism $V \simeq \Spa(A,A^{\circ}) \times_K \mathbb{B}^r =\Spa(A\langle T_1,...,T_r\rangle, A^{\circ}\langle T_1,...,T_r\rangle)$, under which the individual components $D_i \cap V$ containing $x$ identify with the zero loci of the coordinate functions $T_i \in \mathcal{O}(\mathbb{B}^r)$.
	\end{thm}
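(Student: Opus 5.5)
The plan is to build the product structure in three moves: first straighten the divisor near $x$, then construct a retraction onto the intersection of the components through $x$, and finally show that the retraction together with the defining equations gives an isomorphism onto a product with a ball.

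\textbf{Setup.} Since every open containing $x$ also contains the maximal generalization of $x$, and conversely, I would first replace $x$ by a classical point of a small affinoid neighborhood lying in the closure of the same components. The components not containing $x$ are closed and miss $x$, so after shrinking to an affinoid neighborhood $W=\Spa(B,B^\circ)$ we may assume that only $D_1,\dots,D_r$ meet $W$. By the strict normal crossing hypothesis, after further shrinking each $D_i\cap W$ is cut out by a single function $f_i\in B$. Moreover $df_1,\dots,df_r$ are linearly independent in $\Omega_{W}\otimes\kappa(y)$ at every $y\in Z:=V(f_1,\dots,f_r)\subset W$. In particular $Z=\Spa(A,A^\circ)$ with $A=B/(f_1,\dots,f_r)$ is a smooth affinoid of codimension $r$ containing $x$. (If $r=0$, take $V=W$ and $A=B$.)

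\textbf{Retraction.} The key construction is a retraction $\rho:W'\to Z$ defined on a neighborhood $W'\subset W$ of $Z$, satisfying $\rho|_Z=\mathrm{id}$. To build it, I would present $A$ as an étale algebra over a Tate algebra, $K\langle S_1,\dots,S_e\rangle\to A$, which is possible locally by smoothness after shrinking $Z$ around $x$. Then:
\begin{enumerate}[(i)]
\item lift the images of the $S_j$ arbitrarily to power-bounded elements $s_j\in B$;
\item use the étaleness of $A$ over $K\langle S\rangle$ to lift $B\to A$ back to a section $A\to B'$, where $B'$ is a suitable rational localization of $B$ around $Z$.
\end{enumerate}
Step (ii) is a Hensel / Elkik-type approximation argument: étale algebras are given by equations with invertible Jacobian, so Newton iteration converges $f$-adically on a small tube $\{|f_i|\le|p|^{c}\}$. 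I expect this step to be the main obstacle: one must make the convergence uniform on a tube of fixed radius around $Z$, rather than merely formally along the $(f)$-adic completion. I would first try Kiehl's approach: solve the defining equations of $A$ in $B\langle f/p^c\rangle$ by Newton's method, using the Jacobian being a unit modulo $(f)$ and choosing $c$ large.

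\textbf{Isomorphism.} With $\rho$ in hand, consider the morphism
$$\phi=(\rho, f_1/p^c,\dots,f_r/p^c): W'':=\{y\in W' : |f_i(y)|\le|p^c|\}\longrightarrow Z\times_K\mathbb B^r .$$
It restricts to the identity $Z\to Z\times\{0\}$. Its differential is invertible along $Z$, because $d\rho$ is the identity on the tangent directions of $Z$ and the $df_i$ span the normal directions. Hence $\phi$ is étale in a neighborhood of $Z$, and it is an isomorphism over $Z\times\{0\}$. A rigid inverse function theorem, or equivalently the fact that a finite étale map of degree one is an isomorphism after noting that $\phi$ is finite over a small tube, then shows that $\phi$ is an isomorphism after shrinking $Z$ to a smaller affinoid neighborhood $Z'$ of $x$ and increasing $c$. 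I would establish this by showing that $\phi^*$ is an isomorphism modulo $(T)$ and complete, then applying Nakayama on affinoid algebras.

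Taking $V=\phi^{-1}(Z'\times\mathbb B^r)$ and $A=\mathcal O(Z')$, the rescaled coordinates $T_i=f_i/p^c$ cut out exactly $D_i\cap V$, which is the assertion of Theorem \ref{tubular}.
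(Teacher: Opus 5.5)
The paper does not prove this statement; it quotes it from Kiehl \cite[Theorem 1.18]{kiehl1967derham} (see also \cite[Theorem 2.11]{mitsui2009criterion}). Your outline is a direct proof of that cited result, and so a genuine alternative to the citation: you Newton-lift a retraction $\rho$ onto $Z=V(f_1,\dots,f_r)$ over a thin tube $\{|f_i|\le|p|^c\}$, then show that $(\rho,f/p^c)$ identifies the tube with $Z\times\mathbb B^r$. It is self-contained and works at adic points directly. By shrinking $Z'$ and the tube it also yields arbitrarily small $V$, which is what the hypercover in Subsection \ref{descent} needs.

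The step you single out as the main obstacle is fine. On the tube, the initial error of your lifts lies in $(f_1,\dots,f_r)$, so it has norm $O(|p|^c)$. Meanwhile the Jacobian has an inverse bounded independently of $c$. Hence Newton's method converges for $c\gg0$.

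Two other steps need repair.

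First, drop the reduction to a classical point, because its justification is false. An open set containing the maximal generalization of $x$ need not contain $x$: on the closed unit disc, $\{|T|\ge 1\}$ contains the Gauss point but not its rank-two specialization with $|T|<1$. Moreover, a tubular neighbourhood of a nearby classical point need not contain $x$, so such neighbourhoods do not cover the adic space. Nothing later in your argument uses classicality, so simply work with an affinoid neighbourhood of $x$ itself.

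Second, the final isomorphism is not justified as written, and Nakayama cannot be invoked.
\begin{itemize}
\item $\mathcal O(W'')$ is not $(T)$-adically complete; already $A\langle T\rangle$ is not.
\item Finiteness of $\phi$ is not established.
\item Being \'etale and an isomorphism over $Z\times\{0\}$ does not by itself give an isomorphism on a neighbourhood.
\end{itemize}

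What works is a Banach version of your idea, and this is where ``increasing $c$'' really enters.
\begin{itemize}
\item Let $B_c$ be the ring of the tube of radius $|p|^c$, with $T_i=f_i/p^c$. Since $\rho^*$ splits $B_c\to B_c/(T)=A$, the open mapping theorem gives a constant $C$ such that every $b$ can be written $b=\rho^*(\bar b)+\sum_i T_ib_i$ with $\|b_i\|\le C\|b\|$.
\item On the thinner tube, where $T_i=p^{c'}U_i$, iterating this gives expansions $\sum_\nu\rho^*(a_\nu)U^\nu$ with $\|a_\nu\|\le C(C|p|^{c'})^{|\nu|}\|b\|$. These converge once $C|p|^{c'}<1$.
\item Write elements of $B_{c+c'}=B_c\langle U\rangle/(p^{c'}U-T)$ as $\sum_\mu\beta_\mu U^\mu$. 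The previous step then shows that $A\langle U_1,\dots,U_r\rangle\to B_{c+c'}$ is surjective.
\item For injectivity, note that $(U)^d/(U)^{d+1}$ is free over $A$ on the monomials of degree $d$, because the $f_i$ form a regular sequence along $Z$. Ideals in affinoid algebras are closed, so the lowest-degree part of a vanishing series must itself vanish.
\end{itemize}
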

	
	Therefore, we can take a hypercover $X^{\bullet} \to X$ such that for each $i\geq 0$, $X^i=\bigsqcup_{l \in S_i}V^{i,l}$ is the disjoint union of tubular neighborhoods $V^{i,l}$ of the form described in Theorem \ref{tubular}, where $S_i$ is an index set. Following the previous subsection, for each $n \geq 0$ one can define an open subset $U_n^{i,l}\subset V^{i,l}$ in the same way as in (\ref{increasingopen}). Let $D^{i,l}$ be the union of the zero loci of the coordinate functions. Then $V^{i,l}-D^{i,l}$ is the increasing union of $\{U_n^{i,l}\}_{n\in \N}$. For $i \geq 0,l\in S_i$, $V^{i,l}$ has a natural log structure induced by $X$, which can also be described as the log structure given by the strict normal crossing divisor $D^{i,l}.$ We also endow $X^i$ with the natural log structure.
	
	The hypercover $X^{\bullet} \to X$ induces a hypercover $U_n^{\bullet} \to U_n$ for each $n \geq 0.$ For each $i$, we have $U_n^i=\bigsqcup_{l \in S_i}(V^{i,l}\cap U_n)$. The hypercover-to-cohomology spectral sequence gives 
	$$E_{n,1}^{i,j}:=H^j_{\et}(U_n^i,\mathcal O^+/p^m)=\bigoplus_{l\in S_i}H^j_{\et}(V^{i,l}\cap U_n,\mathcal O^+/p^m) \Rightarrow H^{i+j}_{\et}(U_n,\mathcal O^+/p^m).$$
	For each $j \geq 0, i\geq 0$ and $l \in S_i$, Lemma \ref{choice} shows that the pro-system $\{H^j_{\et}(V^{i,l}\cap U_n,\mathcal O^+/p^m)\}_{n\in\N}$ is isomorphic to the pro-system $\{H^j_{\et}(U_n^{i,l},\mathcal O^+/p^m)\}_{n\in\N}$, which is almost isomorphic to the constant pro-system with value $H^j_{\ket}(V^{i,l},\mathcal O^+/p^m)$ by our discussion in the previous subsection. Therefore, the pro-system $\{H^j_{\et}(U_n^{i},\mathcal O^+/p^m)\}_{n\in\N}$ is almost isomorphic to the constant pro-system with value $H^j_{\ket}(X^{i},\mathcal O^+/p^m)$. Once again, since we have $$E_1^{i,j}:=H^j_{\ket}(X^i,\mathcal O^+/p^m)\Rightarrow H^{i+j}_{\ket}(X,\mathcal O^+/p^m),$$ by Lemma \ref{prospectral} we have that for each $i \geq 0,$ the pro-system $\{H^i_{\et}(U_n,\mathcal O^+/p^m)\}_{n\in \N}$ is almost isomorphic to the constant pro-system with value $H^i_{\ket}(X,\mathcal O^+/p^m)$.
	This completes the proof of Theorems \ref{main} and \ref{mainlog}.

	\section{Cohomology of integral structure sheaves for partially proper varieties}

	In this section, we study the almost vanishing of $R^1\lim$ for smooth partially proper varieties. We first establish an almost vanishing criterion for inverse systems with almost finitely generated images, and then apply it to the cohomology of integral structure sheaves. We also discuss the role of spherical completeness in obtaining strict vanishing for open balls.
	
	\subsection{Inverse systems of almost finitely generated modules}

	We recall a few standard facts on linearly compact modules which will be used to control derived inverse limits. We then generalize them to the almost setting.
	
	\begin{defn}
		We say that $K$ is spherically complete if every decreasing family of closed balls in $K$ has non-empty intersection.
	\end{defn}
	
	The following notion is the module-theoretic analogue of compactness adapted to linear topologies.
	
	\begin{defn}
		Let $R$ be a topological ring. A topological $R$-module $M$ is said to be linearly topologized if the open submodules of $M$ form a basis of neighborhoods of $0$. A Hausdorff linearly topologized $R$-module $M$ is said to be linearly compact if every family of closed cosets $\{x_i+N_i\}_{i\in I}$, where $N_i\subset M$ are closed submodules, has non-empty intersection whenever every finite subfamily has non-empty intersection.
	\end{defn}
	
	\begin{rem}
		For a non-archimedean field $K$, spherical completeness of $K$ is equivalent to linear compactness of $\mathcal O_K$ with respect to its valuation topology. Thus, if $K$ is spherically complete, then $\mathcal O_K$ is linearly compact as an $\mathcal O_K$-module.
	\end{rem}
	
	We shall use this criterion only through the following immediate consequence.
	
	\begin{lem}
		Assume that $K$ is spherically complete. Then every finitely generated $\mathcal O_K$-module is linearly compact.
	\end{lem}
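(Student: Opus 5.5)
The plan is to reduce to the structure theory of finitely generated modules over the valuation ring $\mathcal O_K$, and then use the preceding remark that $\mathcal O_K$ itself is linearly compact. Throughout, a finitely generated $\mathcal O_K$-module $M$ carries its natural linear topology, with the submodules $p^rM$ ($r\in v(K^\times)_{>0}$) as a basis of neighborhoods of $0$.

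\textbf{Step 1 (structure).} Since $\mathcal O_K$ is a valuation ring, every finitely generated ideal is principal, so $\mathcal O_K$ is Bézout. Finitely generated torsion-free modules over a valuation ring are free, and in general a finitely generated $\mathcal O_K$-module is a quotient of $\mathcal O_K^r$ for some $r$. I will therefore prove linear compactness of $M$ by showing two things: that $\mathcal O_K^r$ is linearly compact, and that Hausdorff quotients of linearly compact modules by closed submodules are linearly compact.

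\textbf{Step 2 (finite products).} A finite product of linearly compact modules is linearly compact. The cleanest route is the standard characterization: $M$ is linearly compact if and only if every family of closed cosets with the finite intersection property has non-empty intersection. For $M_1\times M_2$, first project a family of closed cosets with the finite intersection property to $M_1$. I will enlarge the family to be closed under finite intersections; this keeps it a family of closed cosets. Its images are cosets in $M_1$, and I take their closures there. Linear compactness of $M_1$ then gives a point $x_1$ in all of these closures. Next consider the fibres over $x_1$, intersected with the cosets, inside $M_2$, and apply linear compactness of $M_2$. The closure issue, namely that images of closed cosets need not be closed, is the classical delicate point. I will handle it either by citing Zelinsky's theorem that products of linearly compact modules are linearly compact, or by working with the maximal family via Zorn's lemma, as in Bourbaki.

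\textbf{Step 3 (quotients).} Let $\pi:\mathcal O_K^r\to M$ be a continuous surjection. Preimages of closed cosets of $M$ are closed cosets of $\mathcal O_K^r$, and $\pi$ preserves the finite intersection property. Hence any family of closed cosets of $M$ with the finite intersection property has a point in the intersection of its preimages, and therefore its own intersection is non-empty. It remains to check that $M$ is Hausdorff in its natural topology. This would fail for non-closed relations; for example, $\mathcal O_K/\mathfrak m$ is not Hausdorff. I need to check how the paper's definition treats this. If Hausdorffness is required, I will use the structure theorem: over a valuation ring, finitely generated modules are direct sums of cyclic modules $\mathcal O_K/I$. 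I then restrict to, or interpret, $\mathcal O_K/I$ with the quotient topology modulo $\overline{I}$.

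I expect Step 2 to be the main obstacle, namely the closure of projected cosets in the product argument. Everything else is formal once the linear compactness of $\mathcal O_K$ from spherical completeness is granted.
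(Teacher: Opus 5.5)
Your overall route is the paper's: $\mathcal O_K$ is linearly compact, hence so is $\mathcal O_K^r$, and $M$ is a quotient of $\mathcal O_K^r$. The part of Step 3 that checks the intersection condition via preimages is correct, and citing Zelinsky in Step 2 is acceptable, since the paper also just asserts it. Incidentally, the closure problem you flag in Step 2 does not occur. For a closed submodule $N\subseteq M_1\times M_2$, the submodule $(N+0\times M_2)/N$ of the Hausdorff module $(M_1\times M_2)/N$ is a continuous image of the linearly compact $M_2$, hence linearly compact, hence closed. So projections of closed cosets to $M_1$ are already closed, and your fibre argument works directly, without Zorn's lemma.

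The actual gap is the Hausdorff part of Step 3, which you leave unresolved and propose to fix incorrectly.
\begin{itemize}
\item Your example is false. Since $\mathfrak m\supseteq p\mathcal O_K$, the ideal $\mathfrak m$ is open, hence closed, so $\mathcal O_K/\mathfrak m$ is discrete. In fact every nonzero ideal of $\mathcal O_K$ contains some $p^s\mathcal O_K$, so all ideals are closed and there is nothing to ``reinterpret''. Replacing $I$ by $\overline I$ would in any case prove a different statement.
\item The claim ``finitely generated modules over a valuation ring are direct sums of cyclic modules'' is false in general. It holds for finitely presented modules, or over maximal valuation rings by Kaplansky. Spherical completeness does make $\mathcal O_K$ maximal, but this is far heavier than needed.
\end{itemize}

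The fix uses only the fact you quoted in Step 1: finitely generated torsion-free modules over a valuation ring are free. Hence $M\cong M_{\mathrm{tors}}\oplus\mathcal O_K^k$, with $M_{\mathrm{tors}}$ finitely generated and killed by some nonzero $a$. Then $p^sM=p^s\mathcal O_K^k$ for $s\geq v(a)$, so $\bigcap_s p^sM=0$. The $p$-adic topology on $M$ is the quotient topology from $\mathcal O_K^r$. So this says exactly that the kernel of $\mathcal O_K^r\to M$ is closed and $M$ is Hausdorff, and your Step 3 then finishes the proof. This closedness is precisely what the paper's one-line appeal to quotients uses tacitly.
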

	
	\begin{proof}
		Since $\mathcal O_K$ is linearly compact, the same is true for $\mathcal O_K^r$ for every $r\geq 0$. If $M$ is a finitely generated $\mathcal O_K$-module, then $M$ is a quotient of some $\mathcal O_K^r$. Since linear compactness is preserved under quotients, $M$ is linearly compact.
	\end{proof}
	
	The main point of introducing linearly compact modules is the following vanishing property for towers. It is the analogue of the Mittag-Leffler vanishing for compact objects.
	
	\begin{thm} \cite[Theorem 7.1]{jensen1972derivedlimit}
		Let $\{M_n\}_{n\in\mathbb N}$ be an inverse system of linearly compact modules with continuous transition maps. Then
		$$R^1\lim\nolimits_n M_n=0.$$
	\end{thm}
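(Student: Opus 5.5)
The plan is to prove the statement (Jensen's theorem) directly from the definition of linear compactness, using the explicit description of $R^1\lim$ for a tower. For an inverse system $\{M_n\}_{n\in\N}$ with transition maps $f_n:M_{n+1}\to M_n$, we have
$$R^1\lim\nolimits_n M_n=\operatorname{coker}\Big(\prod_n M_n\xrightarrow{\ \delta\ }\prod_n M_n\Big),\qquad \delta((x_n)_n)=(x_n-f_n(x_{n+1}))_n.$$
So it suffices to show that $\delta$ is surjective. Concretely, given $(y_n)_n$, we must find $(x_n)_n$ with $x_n-f_n(x_{n+1})=y_n$ for all $n$.

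\textbf{Step 1 (finite stages).} For each $N$, consider the set $S_N\subset\prod_{n\le N}M_n$ of tuples $(x_0,\dots,x_N)$ satisfying the equations $x_n-f_n(x_{n+1})=y_n$ for $n<N$. These sets are nonempty: choose $x_N$ arbitrarily and solve backwards. Rather than work in finite products, I would project to a single coordinate. For fixed $k$, let $T_{k,N}\subset M_k$ be the set of values $x_k$ occurring in some solution on $[k,N]$. A backward computation shows that $T_{k,N}$ is a coset
$$c_{k,N}+\im(M_N\to M_k),$$
where $c_{k,N}=y_k+f_k(y_{k+1})+\dots$ is a finite sum.

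\textbf{Step 2 (closedness).} Continuous images of linearly compact modules are linearly compact, and linearly compact submodules of a Hausdorff linearly topologized module are closed. Hence $\im(M_N\to M_k)$ is a closed submodule, and each $T_{k,N}$ is a closed coset. The family $\{T_{k,N}\}_N$ is decreasing in $N$, so its finite intersections are nonempty. Linear compactness of $M_k$ then gives that
$$T_k:=\bigcap_N T_{k,N}$$
is nonempty. Moreover $T_k$ is itself a coset of the closed submodule $I_k:=\bigcap_N\im(M_N\to M_k)$.

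\textbf{Step 3 (compatible choice).} This is the main obstacle: we must pick the $x_k$ coherently across all $k$. I would show that the map $T_{k+1}\to T_k$, given by $x\mapsto y_k+f_k(x)$, is surjective. Given $t\in T_k$, consider the sets $\{x\in M_{k+1}: y_k+f_k(x)=t\}\cap T_{k+1,N}$. Each is a closed coset: it is the intersection of a closed coset with a fibre of a continuous map, and that fibre is a closed coset since $\ker f_k$ is closed. Each is nonempty because $t\in T_{k,N+1}$. Finally, they decrease in $N$. Linear compactness of $M_{k+1}$ therefore yields a common point, which is the desired preimage of $t$. With surjectivity in hand, choose $x_0\in T_0$ and inductively lift to $x_{k+1}\in T_{k+1}$, which produces a global solution. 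This shows that $\delta$ is surjective, and hence $R^1\lim_n M_n=0$.
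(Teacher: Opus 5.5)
Your proof is correct. The paper gives no argument of its own here: it quotes Jensen, whose theorem is stated for inverse systems over arbitrary directed sets and for all higher derived limits $\lim^{(i)}$, $i\geq 1$. You prove only the countable case, but that is the only case used (in Corollary \ref{linearvanishing}), and your proof is self-contained.

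The details check out:
\begin{itemize}
\item $\im(M_N\to M_k)$ is linearly compact in the subspace topology, hence closed in the Hausdorff module $M_k$.
\item $\ker f_k$ is closed because $f_k$ is continuous and $M_k$ is Hausdorff.
\item In Steps 2 and 3 you only ever apply linear compactness to decreasing chains of nonempty closed cosets.
\end{itemize}
One point you leave implicit is that $x\mapsto y_k+f_k(x)$ sends $T_{k+1,N}$ into $T_{k,N}$, so the map $T_{k+1}\to T_k$ is well defined. This is immediate: extend a solution on $[k+1,N]$ by $x_k:=y_k+f_k(x_{k+1})$.

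For comparison, there is a shorter route via the Tychonoff theorem for linearly compact modules. The product $\prod_n M_n$ is linearly compact. The sets $\{(x_n)_n : x_n-f_n(x_{n+1})=y_n \text{ for } n<N\}$ are fibres of continuous homomorphisms to Hausdorff modules. They form a decreasing chain of nonempty closed cosets, so one application of linear compactness produces a global solution. Your coordinatewise argument avoids Tychonoff at the price of the compatible-choice step. That step is a Mittag-Leffler-type argument: the $T_k$ are cosets of the stable images $\bigcap_N\im(M_N\to M_k)$, and you show that these surject onto one another.
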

	
	Combining the preceding statements gives the form which will be used below.
	
	\begin{cor} \label{linearvanishing}
		Let $K$ be a spherically complete field, and let $\{M_n\}_{n\in\mathbb N}$ be an inverse system of finitely generated $\mathcal O_K$-modules. Then
		$$R^1\lim\nolimits_n M_n=0.$$
	\end{cor}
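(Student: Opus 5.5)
The plan is to combine the two results stated immediately before Corollary \ref{linearvanishing}. The preceding lemma says that, since $K$ is spherically complete, every finitely generated $\mathcal O_K$-module is linearly compact; Jensen's theorem \cite[Theorem 7.1]{jensen1972derivedlimit} then gives the vanishing of $R^1\lim$ for towers of linearly compact modules with continuous transition maps. So the proof consists of choosing a topology on each $M_n$ so that both results apply at once.

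\textbf{Step 1 (choice of topology).} For each finitely generated $\mathcal O_K$-module $M_n$, I take the topology used in the lemma: the quotient topology from a surjection $\mathcal O_K^r\twoheadrightarrow M_n$. This is the same as the $\mathfrak m$-adic / valuation-adic topology with basis of open submodules $p^\epsilon M_n$. It is independent of the chosen presentation, since any two presentations are comparable via $\mathcal O_K$-linear maps, and these are automatically continuous.

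\textbf{Step 2 (Hausdorffness).} Jensen's theorem requires Hausdorff modules. Over the valuation ring $\mathcal O_K$, a finitely generated module is finitely presented modulo torsion issues, and in fact is a finite direct sum of cyclic modules $\mathcal O_K/I$, using that $\mathcal O_K$ is a Bézout/elementary-divisor type ring. The topology on $\mathcal O_K/I$ is Hausdorff precisely when $I$ is closed. If some $I$ is not closed, e.g.\ $I=\mathfrak m$-type ideals such as $\{v>c\}$, one has to be careful. I would handle this by noting that the lemma in the paper already asserts linear compactness, which by the paper's definition includes Hausdorffness, and simply cite it. If that is suspect, I would replace $M_n$ by its Hausdorff quotient and check that this does not affect $R^1\lim$, at least almost. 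I expect this step to be the only real obstacle, and I would cite the lemma as stated.

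\textbf{Step 3 (continuity).} Any $\mathcal O_K$-linear map $f:M_{n+1}\to M_n$ satisfies $f(p^\epsilon M_{n+1})\subset p^\epsilon M_n$, so all transition maps are continuous.

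\textbf{Step 4 (conclusion).} With Steps 1--3 in place, Jensen's theorem applies to the tower $\{M_n\}_{n\in\N}$ with these topologies and gives $R^1\lim_n M_n=0$. Here $R^1\lim$ is the algebraic derived limit of the underlying modules, so the topology is only an auxiliary device and does not change the object whose vanishing we want.
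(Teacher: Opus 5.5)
Your proposal is correct and is the paper's argument: the corollary comes from combining the preceding lemma (finitely generated $\mathcal O_K$-modules are linearly compact for the $p$-adic topology when $K$ is spherically complete) with Jensen's theorem, and continuity of the $\mathcal O_K$-linear transition maps is automatic. The worry in your Step 2 is unfounded: the cyclic decomposition holds here because $\mathcal O_K$ is a maximal valuation ring (Kaplansky), and any nonzero ideal $I\subseteq\mathcal O_K$, including $\{v>c\}$, contains $p^k\mathcal O_K$ for $k$ large, hence is open and therefore closed, so each $\mathcal O_K/I$ is Hausdorff (even discrete for $I\neq 0$) and no passage to a Hausdorff quotient is needed, which in any case would only give an almost statement.
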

	
	The assumption that $K$ is spherically complete is necessary. We refer the reader to Proposition \ref{opendiscsection} for an explicit calculation. Surprisingly, no spherical completeness assumption on $K$ is needed in the almost setting.
	
	\begin{thm} \label{vanishingalmost}
			Let $K$ be any perfectoid field. Let $\{M_n\}_{n\in\N}$ be an inverse system of $\mathcal O_K$-modules. Assume that for each $n$, the image of $M_{n+1}$ in $M_n$ is almost finitely generated. Then
		$$R^1\lim\nolimits_n M_n \simeq^a 0.$$
	\end{thm}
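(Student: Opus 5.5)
The plan is to reduce in two steps to a statement about towers of finitely generated $\mathcal O_K$-modules, and then to prove an ``almost'' version of Corollary \ref{linearvanishing} that holds without spherical completeness.

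\emph{Step 1: replace $M_n$ by images.} Put $M_n':=\im(M_{n+1}\to M_n)$. The tower $\{M_n'\}_n$ is pro-isomorphic to $\{M_n\}_n$: the inclusions $M'_n\subset M_n$ and the maps $M_{n+1}\to M'_n$ are mutually inverse in $\text{Pro}(\mathcal O_K\text{-Mod})$. Hence $R^1\lim_n M_n\simeq R^1\lim_n M_n'$, and each $M_n'$ is almost finitely generated by assumption.

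\emph{Step 2: almost finitely generated to finitely generated.} Fix $\varepsilon\in v(K^\times)_{>0}$. Choose finitely generated submodules $N_n\subset M_n'$ with $p^\varepsilon M_n'\subset N_n$. I then want to replace $\{N_n\}$ by a sub-tower $\{N_n''\}$ of finitely generated modules, stable under the transition maps, such that
\[
p^{2\varepsilon}M'_n\subset N_n''\subset M_n'.
\]
A naive choice such as $N_n''=\sum_{k\ge n}\im(N_k)$ need not be finitely generated. Instead I would pass to a cofinal subsequence and use almost finite generation of $M'_{n}$ to enlarge the $N_n$ inductively downward in finitely many steps. For each fixed $n$ only finitely many generators of the images need to be added, since $\im(N_{n+1})\subset M_n'$ and $M_n'$ is almost finitely generated. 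This is the step I expect to be most delicate.

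Given such a tower, the short exact sequence $0\to\{N''_n\}\to\{M'_n\}\to\{M'_n/N''_n\}\to 0$ gives an exact sequence
\[
R^1\lim N_n''\to R^1\lim M_n'\to R^1\lim(M_n'/N_n'').
\]
The quotients are killed by $p^{2\varepsilon}$, so the right-hand term is killed by $p^{2\varepsilon}$. Consequently it suffices to prove that $R^1\lim N_n''$ is killed by $p^{\delta}$ for every $\delta>0$. Since $\varepsilon$ is arbitrary, this yields $R^1\lim_n M_n\simeq^a 0$.

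\emph{Step 3: the finitely generated case.} Here I would imitate Jensen's proof of Corollary \ref{linearvanishing}, replacing linear compactness by an ``$\varepsilon$-linear compactness''. The key observation concerns a decreasing chain of closed balls in $K$ with radii decreasing to some $r$. If $r=0$, completeness of $K$ gives a common point. If $r>0$, the concentric balls enlarged by the factor $|p|^{-\varepsilon}$ eventually exceed radius $r$. These enlarged balls form an eventually constant chain, so they have a common point.

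Since a finitely generated module over the valuation ring $\mathcal O_K$ is a quotient of $\mathcal O_K^r$, I expect the following statement to extend to such modules: any family of closed cosets with the finite intersection property has nonempty intersection after each coset $x_i+N_i$ is enlarged to $x_i+p^{-\varepsilon}$-saturated versions of $N_i$, equivalently after multiplying the data by $p^\varepsilon$.

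Now recall that $R^1\lim N_n''$ is the cokernel of
\[
\textstyle\prod N_n''\to\prod N_n'',\qquad (x_n)\mapsto \bigl(x_n-f(x_{n+1})\bigr).
\]
Given $(y_n)$, the set of solutions of the first $k$ equations, projected to level $n$, is a decreasing family of closed cosets in $N_n''$. Applying $\varepsilon$-compactness at each level, and diagonalizing with errors $\varepsilon/2^n$ so that the total loss is $p^{\delta}$, shows that $p^\delta(y_n)$ lies in the image. Hence $R^1\lim N_n''$ is killed by $p^\delta$, completing the argument.
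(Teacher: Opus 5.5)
\textbf{The gap is in Step 2.} With a fixed window $p^{\varepsilon}M_n'\subset N_n\subset M_n'$, nothing forces $f(N_{n+1})\subset N_n$, and enlarging cannot fix this. Any transition-stable system containing all the $N_k$ must contain $\sum_{k\geq n}\im(N_k\to M_n')$, which, as you note yourself, need not be finitely generated. An infinite tower also has no top level from which to induct downward: enlarging $N_{n+1}$ to absorb $f(N_{n+2})$ enlarges $f(N_{n+1})$, which forces $N_n$ to grow again. Passing to a subsequence does not change this.

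The missing idea, which is exactly what the paper does, is to let the window slide with $n$ so that stability is automatic. Almost finite generation gives, whenever $v(b)>v(a)$, a finitely generated $N\subset M$ with $bM\subset N\subset aM$: take a finitely generated $P\subset M$ containing $(b/a)M$ and set $N=aP$. Now fix $t\in\mathfrak m$, choose $0=v(\varpi_0)<v(\varpi_1)<\cdots<v(t)$, and choose finitely generated $N_n$ with $\varpi_nM_n\subset N_n\subset\varpi_{n-1}M_n$. Then $f(N_{n+1})\subset f(\varpi_nM_{n+1})\subset\varpi_nM_n\subset N_n$. Moreover $tM_n\subset N_n$ for all $n$, so $(ty_n)_n\in\prod_nN_n$ and the finitely generated case applies.

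In your own framework you could even drop Step 2. If $p^{\varepsilon}M\subset N\subset M$ with $N$ finitely generated, a decreasing chain of cosets $a_k+L_k$ in $M$ gives the decreasing chain $p^{\varepsilon}a_k+(L_k\cap N)$ in $N$. So your approximate compactness passes to almost finitely generated modules with loss $2\varepsilon$, and Step 3 runs directly on $\{M_n'\}$.

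\textbf{Step 3 is a genuinely different route, but its key lemma is only asserted.} The paper works modulo $p^m$. It uses that decreasing chains of submodules of a finitely generated $\mathcal O_K/p^m$-module are almost stationary to build a subtower whose transition cokernels are killed by $p^{\epsilon_n}$ with $\sum\epsilon_n<\infty$, with strictly pro-zero quotient, and applies Gabber--Ramero. It then returns to $\mathcal O_K$ via derived completeness and an almost derived Nakayama lemma. Your Jensen-style argument works directly over $\mathcal O_K$ and avoids the reduction and the Nakayama step. The lemma is true for countable decreasing chains, which is all you use.
\begin{itemize}
\item For $\mathcal O_K$, your ball argument works. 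The enlarged balls need not be eventually constant, but once $r_{k_0}<r|p|^{-\varepsilon}$ they all contain $D(x_{k_0},r_{k_0})$.
\item Being a quotient of $\mathcal O_K^r$ only transfers the property from $\mathcal O_K^r$; to reach $\mathcal O_K^r$ you need an extension step. For $0\to M'\to M\to M''\to 0$, lift a point $b$ found in $M''$ and apply the property in $M'$ to the chain $(b-p^{\varepsilon}a_k+L_k)\cap M'$. This gives the property for $M$ with loss $2\varepsilon$.
\item In the diagonalization, the element chosen at level $j+1$ only satisfies $f(x_{j+1}^{(j+1)})=\pi_{j+1}(x_j^{(j)}-c_jy_j)$, where $c_j=\pi_0\cdots\pi_j$. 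So all earlier coordinates must be rescaled. The actual solution is $x_j:=(p^{\delta}/c_j)\,x_j^{(j)}$ with $v(c_j)\leq\delta$; infinite products of the $\pi_n$ would not converge.
\end{itemize}
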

	
	We will prove the theorem in the following steps.
	
	
	\begin{prop} \label{vanishingalmostfg}
		Let $\{M_n\}_{n\in\N}$ be an inverse system of $\mathcal O_K/p^m$-modules. Assume that for each $n$, the image of $M_{n+1}$ in $M_n$ is finitely generated. Then $$R^1\lim\nolimits_n M_n \simeq^a 0.$$
	\end{prop}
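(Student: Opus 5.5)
The plan is to reduce to the case where every $M_n$ is finitely generated, to measure such modules by a real-valued ``length'' $\lambda$, and to deduce an almost Mittag-Leffler property from monotonicity of $\lambda$. An explicit computation of $\lim^1$ then shows that $p^\varepsilon$ kills $R^1\lim_n M_n$ for every $\varepsilon>0$.

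\emph{Step 1 (reduction).} Put $N_n:=\operatorname{im}(M_{n+1}\to M_n)$. The inclusions $N_n\hookrightarrow M_n$ and the maps $M_{n+1}\to N_n$ exhibit $\{N_n\}_n$ and $\{M_n\}_n$ as isomorphic pro-objects. Hence $R^1\lim_n N_n\simeq R^1\lim_n M_n$, and we may assume that every $M_n$ is a finitely generated $\mathcal O_K/p^m$-module.

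\emph{Step 2 (length and stabilisation).} For a finitely generated $p^m$-torsion $\mathcal O_K$-module $M$, define $\lambda(M)\in[0,\infty)$ as the valuation of the $0$-th Fitting ideal $\operatorname{Fitt}_0(M)$, that is, the infimum of the valuations of its elements. Since $\mathcal O_K$ is a valuation ring and $M$ is killed by $p^m$, this ideal contains $p^{m r}$ if $M$ is generated by $r$ elements, so $\lambda(M)\le mr$.

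The properties we need are the following.
\begin{enumerate}[(a)]
\item If $M'\subseteq M$ are finitely generated, then $\lambda(M')\le\lambda(M)$.
\item $\lambda(M)=\lambda(M')+\lambda(M/M')$, up to almost-zero error.
\item If $\lambda(Q)<\varepsilon$, then $p^\varepsilon Q=0$.
\end{enumerate}
Property (c) holds because $\operatorname{Fitt}_0(Q)\subseteq\operatorname{Ann}(Q)$.

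For finitely presented modules these properties follow from the structure theorem: a finitely presented torsion module over a valuation ring is $\bigoplus_i \mathcal O_K/a_i$, and $\lambda=\sum_i v(a_i)$. For merely finitely generated modules I would pass to almost finitely presented ones. Over the perfectoid ring $\mathcal O_K$, every finitely generated ideal is principal, and every ideal is almost equal to a principal one up to an arbitrarily small $p^\delta$. This should let one approximate $M$ by $\bigoplus_i\mathcal O_K/a_i$ up to kernels and cokernels killed by $p^\delta$, and I expect this to give (a)--(c) up to $\delta$.

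Now fix $n$. The images $I_{n,k}:=\operatorname{im}(M_k\to M_n)$ for $k\ge n$ form a decreasing chain of finitely generated submodules of $M_n$. Their lengths $\lambda(I_{n,k})$ decrease and are bounded below by $0$, so they converge. By (b), $\lambda(I_{n,k}/I_{n,k'})\to0$ as $k'\ge k\to\infty$, and by (c) the quotients $I_{n,k}/I_{n,k'}$ are eventually killed by any prescribed $p^{\varepsilon}$. This is an almost Mittag-Leffler condition: for every $n$ and every $\varepsilon>0$ there is $k(n,\varepsilon)$ such that $p^\varepsilon I_{n,k}\subseteq I_{n,k'}$ for all $k'\ge k\ge k(n,\varepsilon)$.

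\emph{Step 3 (almost ML implies almost vanishing).} We use the description
$$R^1\lim\nolimits_n M_n=\operatorname{coker}\Big(\prod_n M_n\to\prod_n M_n,\ (x_n)_n\mapsto (x_n-f(x_{n+1}))_n\Big).$$
Fix $\varepsilon>0$, put $\varepsilon_n:=\varepsilon/2^{n+1}$, and replace the system by a cofinal subsequence so that $p^{\varepsilon_n}I_{n,n+1}\subseteq I_{n,k}$ for all $k$. Given $(y_n)_n$, we solve $p^\varepsilon y=(x_n-f(x_{n+1}))_n$ by the usual inductive correction of the Mittag-Leffler argument. At stage $n$ we spend a factor $p^{\varepsilon_n}$ to lift an element of the first image into arbitrarily deep images. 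The total cost is $\sum_n\varepsilon_n\le\varepsilon$, and the successive corrections converge because they eventually lie in deep images, which affect each coordinate only finitely often. Hence $p^\varepsilon$ kills $R^1\lim_n M_n$ for every $\varepsilon>0$, so $R^1\lim_n M_n\simeq^a0$.

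The main obstacle is Step 2 for modules that are finitely generated but not finitely presented, such as $\mathcal O_K/\mathfrak m$. I must check that the $\delta$-errors in the almost additivity of $\lambda$ do not accumulate along the chain. I would first try to control them by choosing $\delta$ independently for each comparison, since only finitely many comparisons enter for each fixed $n$ and $\varepsilon$.
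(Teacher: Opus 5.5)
Your proposal is correct in outline. It reaches the same intermediate statement as the paper, the almost Mittag-Leffler property $p^\varepsilon I_{n,k}\subseteq I_{n,k'}$, but by a different mechanism.

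\textbf{Step 2 compared with the paper.} The paper never introduces a length. It proves directly that any decreasing chain $P_1\supset P_2\supset\cdots$ of submodules of a finitely generated $\mathcal O_K/p^m$-module is almost stationary:
\begin{enumerate}
\item first for $\mathcal O_K/p^m$ itself, where an ideal is determined up to almost equality by $\alpha_i=\inf v(P_i)$, and these increase to a limit;
\item then for $(\mathcal O_K/p^m)^r$, by induction on $r$ (intersect with a coordinate submodule and project to the remaining factor);
\item finally for quotients, by pulling the chain back.
\end{enumerate}
This d\'evissage needs only the rank-one computation and never meets your ``main obstacle''. Your route packages the whole chain into one real number. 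The price is that you need exact additivity of $\lambda=v(\operatorname{Fitt}_0)$ on finitely generated, possibly non-finitely-presented torsion modules, and you leave that as an expectation.

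\textbf{Closing the additivity gap.} The additivity is true, and the $\delta$-errors you worry about do not arise.
\begin{itemize}
\item Write $M=F/Q$ with $F=\mathcal O_K^r$ and $Q=\bigcup_\alpha Q_\alpha$ a directed union of finitely generated submodules containing $p^mF$. Then $\operatorname{Fitt}_0(M)=\bigcup_\alpha\operatorname{Fitt}_0(F/Q_\alpha)$, so $\lambda(M)=\inf_\alpha\lambda(F/Q_\alpha)$.
\item Given $0\to M'\to M\to M''\to0$, choose a finitely generated (hence free) $N\subseteq F$ mapping onto $M'$.
\item The sequences $0\to N/(N\cap Q_\alpha)\to F/Q_\alpha\to F/(N+Q_\alpha)\to0$ consist of finitely presented torsion modules, by coherence of $\mathcal O_K$.
\item On such modules $\lambda$ is exactly additive: they have injective square presentations, and the horseshoe lemma gives block-triangular determinants.
\item Each term converges to the corresponding one for $M'$, $M$, $M''$, since $\bigcup_\alpha(N\cap Q_\alpha)=N\cap Q$ and $\bigcup_\alpha(N+Q_\alpha)=N+Q$.
\end{itemize}
Hence (a) and (b) hold exactly. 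Alternatively, you can cite Gabber--Ramero's normalized length.

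\textbf{Step 3.} The paper builds the subsystem $M_n'$ of images from sufficiently far out, observes that $\{M_n/M_n'\}$ is pro-zero, and applies \cite[Lemma 2.4.2.(iii)]{gabber2003almost} to $\{M_n'\}$. Your direct version is fine, but lifting into ``arbitrarily deep images'' and corrections that ``converge'' are unnecessary and only blur the argument.
\begin{itemize}
\item Since $(y_n)_n-(y_n-f(y_{n+1}))_n=(f(y_{n+1}))_n$, every class of $R^1\lim_n M_n$ is represented in $\prod_n I_{n,n+1}$.
\item There the single condition $p^{\varepsilon_n}I_{n,n+1}\subseteq I_{n,n+2}$ lets you solve $x_n-f(x_{n+1})=p^\varepsilon f(y_{n+1})$ by forward induction from $x_0=0$, keeping $x_{n+1}\in p^{\varepsilon-\sum_{j\le n}\varepsilon_j}I_{n+1,n+2}$.
\item Take $\varepsilon_n\in v(K^\times)$ with $\sum_n\varepsilon_n\le\varepsilon$, since $\varepsilon/2^{n+1}$ need not lie in the value group.
\end{itemize}
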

	
	\begin{proof}
		We may assume that each $M_n$ is finitely generated. We first note that, if $M$ is a finitely generated $\mathcal O_K/p^m$-module, then any decreasing chain of submodules $P_1 \supset P_2 \supset \cdots$ of $M$ is almost stationary, that is, for any $t \in \mathfrak m,$ there exists some $N>0$ such that for any $r \geq N$, $tP_N \subset P_r$. In fact, it suffices to prove the claim for $M=\mathcal O_K/p^m$: one may then use induction to prove it for $(\mathcal O_K/p^m)^n$, and hence for any finitely generated $M$. For $M=\mathcal O_K/p^m$, the $P_i$ are ideals of $M$, and we can define $\alpha_i:=\inf\{v(x)|x\in P_i\} \in [0,m]$ (we pick an arbitrary lift of $x$ to $\mathcal O_K$). Here $v(-)$ is the $p$-adic valuation. Then $(\alpha_i)$ is an increasing sequence. Let $\alpha:=\sup_i \alpha_i.$ For any $t \in \mathfrak m$, we can choose $N$ such that $\alpha_N +v(t) >\alpha$, which proves the claim.
		
		Now choose a decreasing sequence $\epsilon_n>0$ such that $\sum \epsilon_n$ converges in the value group of $K$. We construct a new inverse system $\{M_n'\}$ inductively: for each $n$, let $M_n'$ be the image of $M_N$ in $M_n,$ where $N$ is chosen such that for any $r \geq N$, $p^{\epsilon_n}\im(M_N \to M_n) \subset \im(M_r \to M_n)$. Moreover, we choose $N$ so that the image of $M_n'$ in $M_{n-1}$ is contained in $M_{n-1}'$. This gives an exact sequence of inverse systems $$0 \to \{M_n'\} \to \{M_n\} \to \{M_n/M_n'\} \to 0.$$
		
		For any $n \geq 0$, the construction of $M_n'$ shows that there exists some $N>0$ such that the map $M_N/M_N' \to M_n/M_n'$ is (strictly) zero. Therefore $R^1\lim_n M_n/M_n'=0.$ On the other hand, since we have (by construction) $$p^{\epsilon_n} M_n' \subset \im (M_{n+1}'\to M_n') \subset M_n',$$ we know that $p^{\epsilon_n}\cdot  \operatorname{coker}(M_{n+1}'\to M_n')=0$. Then \cite[Lemma 2.4.2.(iii)]{gabber2003almost} shows that $R^1\lim_n M_n'$ is almost zero. Therefore $R^1\lim\nolimits_n M_n$ is almost zero, which completes the proof.
	\end{proof}
	
	\begin{prop} \label{vanishingalmostfg2}
		Let $\{M_n\}_{n\in\N}$ be an inverse system of $\mathcal O_K$-modules. Assume that for each $n$, the image of $M_{n+1}$ in $M_n$ is finitely generated. Then $$R^1\lim\nolimits_n M_n \simeq^a 0.$$
	\end{prop}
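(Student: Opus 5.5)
We reduce Proposition \ref{vanishingalmostfg2} to the torsion case, Proposition \ref{vanishingalmostfg}, by reducing modulo powers of $p$ and controlling the $p$-power torsion and the $p$-adic limit. First, as in the previous proof, we may replace $M_n$ by the image of $M_{n+1}$ in $M_n$. This does not change $R^1\lim$, since the two pro-systems are pro-isomorphic. After this replacement, each $M_n$ is a finitely generated $\mathcal O_K$-module.

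Since $\mathcal O_K$ is a valuation ring, each such $M_n$ is a finitely presented-up-to-torsion module. More precisely, its torsion-free quotient is finite free, so $M_n\simeq \mathcal O_K^{r_n}\oplus T_n$ with $T_n$ finitely generated torsion. Every finitely generated torsion module over $\mathcal O_K$ is killed by some $p^{c}$.

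The main step uses the short exact sequences $0\to M_n[p^m]\to M_n\xrightarrow{p^m} M_n\to M_n/p^m\to 0$. From Proposition \ref{vanishingalmostfg} applied to $\{M_n/p^m\}_n$ (an inverse system of finitely generated $\mathcal O_K/p^m$-modules) we get $R^1\lim_n M_n/p^m\simeq^a 0$. We then want to pass to $M_n$ itself. We write $R\lim_n M_n$ and compare it with $R\lim_m R\lim_n (M_n\otimes^L \mathcal O_K/p^m)$. Since each $M_n$ is finitely generated over the $p$-adically complete ring $\mathcal O_K$, it is derived $p$-complete, so $M_n\simeq R\lim_m (M_n\otimes^L\mathcal O_K/p^m)$. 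Exchanging limits gives
$$R\lim_n M_n\simeq R\lim_m R\lim_n (M_n\otimes^L\mathcal O_K/p^m).$$
The complex $M_n\otimes^L\mathcal O_K/p^m$ has $H^{-1}=M_n[p^m]$ and $H^0=M_n/p^m$. Both are finitely generated $\mathcal O_K/p^m$-modules: for $M_n[p^m]$ this follows from the decomposition above. Hence Proposition \ref{vanishingalmostfg} kills $R^1\lim_n$ of both almost. It follows that $H^1(R\lim_n(M_n\otimes^L\mathcal O_K/p^m))\simeq^a0$, and that $H^0$ is almost $\lim_n M_n/p^m$ modulo an extension involving $\lim_n M_n[p^m]$.

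The remaining obstacle is the outer $R\lim_m$. We need $R^1\lim_m$ of the almost-$H^0$ terms, essentially $\{\lim_n M_n/p^m\}_m$, to be almost zero. Here I would try to show this system is almost Mittag-Leffler: the transition maps $M_n/p^{m+1}\to M_n/p^m$ are surjective for each fixed $n$. One can also use a uniform version of the construction in the proof of Proposition \ref{vanishingalmostfg}, with summable $\epsilon_n$, to show that the images $\im(\lim_n M_n/p^{m+1}\to \lim_n M_n/p^m)$ contain $\mathfrak m\cdot \lim_n M_n/p^m$ up to arbitrarily small error. Then \cite[Lemma 2.4.2]{gabber2003almost} gives almost vanishing of this $R^1\lim_m$.

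Care is needed because almost-zero errors accumulate under the outer limit. To handle this, I would use the stronger form of the torsion result: the cokernels of transitions are killed by $p^{\epsilon_n}$ with $\sum\epsilon_n$ finite. This form yields a uniform bound independent of $m$, so the errors do not grow. Combining these, $H^1(R\lim_n M_n)\simeq^a 0$.
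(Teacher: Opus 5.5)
There is a genuine gap, and it is the step you call the remaining obstacle.

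\textbf{What works.} Your setup is fine. $R\lim_n M_n$ is derived $p$-complete, and
$C_m:=R\lim_n(M_n\otimes^L\mathcal O_K/p^m)\simeq(R\lim_n M_n)\otimes^L\mathcal O_K/p^m$.
Moreover $H^1(C_m)\simeq R^1\lim_n(M_n/p^m)\simeq^a0$ by Proposition \ref{vanishingalmostfg}. Only the terms $M_n/p^m$ enter here, since $R^2\lim=0$.

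\textbf{Where it breaks.} You never control $R^1\lim_m H^0(C_m)$. Surjectivity of $M_n/p^{m+1}\to M_n/p^m$ for each fixed $n$ says nothing after applying $\lim_n$. The appeal to a ``uniform version'' with summable $\epsilon_n$ is not an argument.

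In fact this step is the whole problem in disguise. Put $Q:=R^1\lim_n M_n$. Your identity $R\lim_n M_n\simeq R\lim_m C_m$ gives
$0\to R^1\lim_m H^0(C_m)\to Q\to\lim_m H^1(C_m)\to0$.
Here $H^1(C_m)=Q/p^m\simeq^a0$, so showing your obstacle is almost zero is equivalent to showing $Q\simeq^a0$. Concretely, consider $0\to(\lim_n M_n)/p^m\to H^0(C_m)\to Q[p^m]\to0$, with transition maps the projections on the left and multiplication by $p$ on the right. It identifies the obstacle with $R^1\lim_m Q[p^m]$, i.e.\ the failure of $Q$ to be $p$-adically separated.

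\textbf{Further errors.} $M_n[p^m]$ need not be finitely generated. For the cyclic module $M_n=\mathcal O_K/p^2\mathfrak m$ one has $M_n[p]=p\mathfrak m/p^2\mathfrak m\cong\mathfrak m/p\mathfrak m$. So Proposition \ref{vanishingalmostfg} cannot be applied to $\{M_n[p^m]\}_n$. Also, the extra term in $H^0(C_m)$ is $R^1\lim_n M_n[p^m]$, not $\lim_n M_n[p^m]$.

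\textbf{The missing idea.} The paper stops in degree one and applies Nakayama to $Q$ itself.
\begin{enumerate}
\item $Q$ is derived $p$-complete, being a cohomology group of the derived complete complex $R\lim_n M_n$.
\item $R^1\lim$ is right exact on countable towers, so $Q/p\simeq R^1\lim_n(M_n/p)$. This is almost zero by Proposition \ref{vanishingalmostfg}; it is just your $H^1(C_1)$.
\item An almost derived Nakayama lemma finishes: a derived complete $\mathcal O_K$-module $Q$ with $\mathfrak mQ\subseteq pQ$ is almost zero.
\end{enumerate}
This lemma needs a real argument, since $\mathfrak mQ\subseteq pQ$ only yields $\mathfrak mQ\subseteq\bigcap_m p^mQ$. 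The paper picks $\epsilon_n>0$ with $\sum_n\epsilon_n=v(t_0)$. Starting from $x$ with $t_0x\neq0$, it builds $z_n$ with $pz_{n+1}=p^{\epsilon_n}z_n$. This gives a nonzero element of $\lim(\cdots\xrightarrow{p}Q\xrightarrow{p}Q)$, contradicting derived completeness.

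So your computation was one step from a proof; the detour through the outer $R^1\lim_m$ is where it fails.
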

	
	\begin{proof}
		We may assume that each $M_n$ is finitely generated. As finitely generated modules over $\mathcal O_K$ are complete, we know that $R\lim_n M_n$ is derived complete, and therefore $R^1\lim_n M_n$ is derived complete. Proposition \ref{vanishingalmostfg} shows that $R^1\lim_n( M_n/p) \simeq (R^1\lim_n M_n)/p$ is almost zero. We then use the almost derived Nakayama lemma below to conclude.
	\end{proof}
	
	The above proof uses the following almost version of the derived Nakayama lemma.
	
	\begin{lem}
		Let $M$ be an (ordinary) derived complete $\mathcal{O}_K$-module. If $M/pM$ is almost zero, then $M$ is almost zero. 
	\end{lem}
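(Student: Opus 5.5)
We want: $M$ is derived $p$-complete and $M/pM$ is almost zero; conclude $M$ is almost zero. The plan is to show $\epsilon M = 0$ for every $\epsilon \in \mathfrak m$ by a successive approximation argument that uses completeness to sum an infinite series.

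Fix $x \in M$ and $\epsilon = p^{r}$ with $r>0$ rational in the value group. Choose a sequence of positive rationals $r_0, r_1, \dots$ with $\sum_k r_k = r$, for instance $r_k = r/2^{k+1}$ (adjusted to lie in $v(K^\times)$, which is divisible enough since $K$ is perfectoid). Since $M/pM$ is almost zero, $p^{r_0}x \in pM$, so write $p^{r_0}x = p y_1$. Then $p^{r_1}y_1 = p y_2$, and so on: $p^{r_k} y_k = p y_{k+1}$ with $y_0 = x$. Combining,
$$p^{r_0+\cdots+r_k}x = p^{k+1} y_{k+1}.$$
Hence $p^{r}x = p^{k+1}\bigl(p^{r-(r_0+\cdots+r_k)}y_{k+1}\bigr)$ lies in $p^{k+1}M$ for every $k$. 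So $p^{r}x \in \bigcap_k p^kM$.

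The remaining issue, and the main obstacle, is that a derived complete module need not be $p$-adically separated in the naive sense, but it is classically complete up to the distinction $\lim$ vs.\ $R\lim$. To handle this properly, I would instead argue at the level of the system $\{y_k\}$ and derived completeness. Rewrite the relations as: setting $z_k := p^{r - (r_0+\cdots+r_{k-1})} y_k$ produces elements with $z_k = p z_{k+1}$ and $z_0 = p^r x$. A sequence $(z_k)$ with $z_k = p z_{k+1}$ defines a homomorphism $\Z[1/p] \to M$, or $\mathcal O_K[1/p]=K\to M$ after the analogous construction. Derived $p$-completeness of $M$ means precisely $\operatorname{Hom}(\Z[1/p],M)=\operatorname{Ext}^1(\Z[1/p],M)=0$ (equivalently $R\lim(\cdots\xrightarrow{p}M\xrightarrow{p}M)=0$). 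The vanishing of $\operatorname{Hom}(\Z[1/p],M)$ forces $z_0=0$, i.e.\ $p^r x = 0$.

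Since $x$ and $r>0$ were arbitrary, $\mathfrak m M=0$, i.e.\ $M$ is almost zero. The only delicate choice is making the exponents $r_k$ sum exactly to $r$ (or at most $r$, absorbing the difference into the first element); everything else is the standard derived Nakayama argument with the almost slack $p^{r_k}$ inserted at each step.
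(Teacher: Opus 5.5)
Your proof is correct and is essentially the paper's argument: the paper likewise uses $\mathfrak m M\subseteq pM$ to build a sequence $(y_n)$ with $p\,y_{n+1}=y_n$ and $y_1=t_0x$, then concludes from the vanishing of $\lim(\cdots\xrightarrow{p}M\xrightarrow{p}M)$; the only difference is that the paper phrases it as a contradiction, whereas you show $p^r x=0$ directly. Two small points: the detour through $\bigcap_k p^kM$ is unnecessary, and since the value group is only guaranteed to be $p$-divisible (not $2$-divisible), you should take e.g.\ $r_k=r(p-1)/p^{k+1}$ instead of $r/2^{k+1}$.
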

	
	\begin{proof}
		Suppose the opposite. Then there exist elements $x\in M$ and $t_0\in\mathfrak{m}$ such that $t_0x \neq 0.$ Choose a sequence $\epsilon_n>0$ such that $\sum \epsilon_n=v(t_0)$. As $\mathfrak{m}M=pM,$ we can construct a sequence $z_n$ inductively such that $z_1=x$ and $pz_{n+1}=p^{\epsilon_n}z_n$. Let $y_n:=(\prod_{i=n}^{\infty}p^{\epsilon_i})z_n.$ Then $py_{n+1}=y_n$. Therefore, as $y_1=t_0x \neq 0$, the sequence $(y_n)$ defines a nonzero element $(y_n)\in\lim(M\xleftarrow p M\xleftarrow p\cdots),$ which contradicts the derived completeness of $M.$
	\end{proof}

	\begin{proof}[Proof of Theorem \ref{vanishingalmost}]
		We may assume that each $M_n$ is almost finitely generated. We first record a simple consequence of almost finite generation. Let $M$ be an almost finitely generated $\mathcal O_K$-module, and let $a,b\in \mathcal O_K$ satisfy $v(b)>v(a).$ Then there exists a finitely generated submodule $N\subset M$ such that $bM\subset N\subset aM.$
		Indeed, applying almost finite generation to $b/a\in \mathfrak m$, we can find a finitely generated $\mathcal O_K$-module $F$ and maps $M\longrightarrow F\longrightarrow M$ whose composite is multiplication by $b/a$. If $P\subset M$ denotes the image of $F$ in $M$, then $P$ is finitely generated and contains $(b/a)M$. Hence $N:=aP$ has the desired property.
		
		Now choose an arbitrary $t\in \mathfrak m$. It suffices to prove that multiplication by $t$ kills $R^1\lim_n M_n$. Since the value group of $K$ is dense, we may choose elements $1=\varpi_0,\varpi_1,\varpi_2,\ldots \in \mathcal O_K$ such that $0=v(\varpi_0)<v(\varpi_1)<v(\varpi_2)<\cdots < v(t).$
		Applying the preceding observation to $M_n$, with $a=\varpi_{n-1}$ and $b=\varpi_n$, we obtain finitely generated submodules $N_n\subset M_n$ such that
		$$\varpi_n M_n\subset N_n\subset \varpi_{n-1}M_n .$$
		These submodules form an inverse subsystem. Indeed, writing $f_n:M_{n+1}\to M_n$ for the transition maps, we have
		$$f_n(N_{n+1})\subset f_n(\varpi_n M_{n+1})\subset \varpi_n M_n\subset N_n .$$
		Thus $(N_n)$ is an inverse system of finitely generated $\mathcal O_K$-modules.
		
		By Proposition \ref{vanishingalmostfg2}, $R^1\lim_n N_n\simeq^a 0$. Using the standard description
		$$R^1\lim_n M_n=\operatorname{coker}\left(\prod_n M_n\xrightarrow{D}\prod_n M_n\right),\qquad D((x_n))=(x_n-f_n(x_{n+1})),$$
		let a class in $R^1\lim_n M_n$ be represented by $(y_n)_n\in \prod_n M_n$. Since $v(t)>v(\varpi_n)$, we have $tM_n\subset \varpi_n M_n\subset N_n$ for every $n$. Hence $(ty_n)_n\in \prod_n N_n$. The almost vanishing of $R^1\lim_n N_n$ gives elements $x_n\in N_n$ such that $x_n-f_n(x_{n+1})=ty_n$ for all $n$. Therefore, the class represented by $(ty_n)_n$ is zero in $R^1\lim_n M_n$, as required.
	\end{proof}
	
	\subsection{General partially proper varieties} 
	
	In this subsection, we study the behavior of \'etale cohomology of integral structure sheaves for partially proper varieties. Although the conclusion of Theorem \ref{main} does not hold in this generality, we prove the following interesting result.
	
	\begin{thm} \label{stein}
		Let $X$ be a smooth partially proper space, and let $ \{U_n\}_{n\in\N}$ be an increasing sequence of quasi-compact adic spaces over $K$ such that $X=\bigcup_n U_n$. Then for any $i \geq 0$ and $m\geq 1$, $$R^1\lim_n H^i_{\et}(U_n, \mathcal{O}^+/p^m)$$ is almost zero.
	\end{thm}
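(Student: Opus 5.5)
The plan is to reduce Theorem \ref{stein} to Theorem \ref{vanishingalmost}. Partial properness gives a better exhaustion, and for that exhaustion the transition maps $H^i_{\et}(U_{n+1},\mathcal O^+/p^m)\to H^i_{\et}(U_n,\mathcal O^+/p^m)$ should have almost finitely generated image.

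\textbf{Step 1 (choice of exhaustion).} By Lemma \ref{choice}, the pro-system $\{H^i_{\et}(U_n,\mathcal O^+/p^m)\}_n$ does not depend on the exhaustion up to pro-isomorphism. Since $R^1\lim$ is invariant under pro-isomorphism, we may choose the $U_n$ freely. Because $X$ is partially proper (taut and countable at infinity), we choose quasi-compact opens $U_n$ with $\overline{U_n}\subset U_{n+1}$, and moreover with $U_n$ relatively compact in $U_{n+1}$ over $K$, i.e.\ $U_n\Subset U_{n+1}$ in the sense of Huber/Scholze. This is where partial properness enters: it is the non-quasi-compact substitute for properness.

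\textbf{Step 2 (finiteness of the image).} We want the almost finiteness input of Scholze's proof of \cite[Theorem 5.1]{scholze2013p}. There, for a smooth space $V$ and a strict inclusion $V'\Subset V$ of affinoid-covered neighbourhoods, one shows the following. Take a finite cover by small affinoids with toric charts and a shrunken cover by strictly smaller rational subsets. Compare the \v Cech complexes of $\mathcal O^+/p$ computed on perfectoid covers (via Cartan--Leray, as in \eqref{groupcoh}). The restriction map then factors, up to almost zero error, through almost finitely generated $\mathcal O_K/p$-modules. This uses \cite[Lemma 5.4]{scholze2013p}: restriction from an affinoid to a strictly smaller rational subset is ``almost compact'' on $\mathcal O^+/p$ modulo the Galois-cohomological pieces.

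Scholze's argument for proper $X$ only uses the two nested covers, and $U_n\Subset U_{n+1}$ provides exactly such nested covers. Running it therefore shows that the image of $H^i_{\et}(U_{n+1},\mathcal O^+/p)\to H^i_{\et}(U_n,\mathcal O^+/p)$ is an almost finitely generated $\mathcal O_K$-module. More precisely, one gets a chain of $i+2$ strict inclusions $U_n=V_0\Subset V_1\Subset\cdots\Subset U_{n+1}$. To keep this length available, we insert enough intermediate opens, which amounts to passing to a subsequence of the $U_n$; by Lemma \ref{choice} again this is harmless.

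\textbf{Step 3 (from $p$ to $p^m$).} Induct on $m$ using the short exact sequence $0\to \mathcal O^+/p^{m-1}\xrightarrow{p}\mathcal O^+/p^m\to\mathcal O^+/p\to 0$ and the long exact cohomology sequences, which are compatible in $n$. An extension of almost finitely generated images, after composing two transition maps, is again almost finitely generated. So after passing to a further subsequence, the image of $H^i(U_{n+1},\mathcal O^+/p^m)$ in $H^i(U_n,\mathcal O^+/p^m)$ is almost finitely generated for all $n$. Theorem \ref{vanishingalmost} then gives $R^1\lim_n H^i_{\et}(U_n,\mathcal O^+/p^m)\simeq^a 0$.

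The main obstacle is Step 2. Scholze's finiteness lemma is proved for proper spaces, where source and target coincide; here it must be adapted to a relative statement for a pair $U_n\Subset U_{n+1}$. This includes checking that the needed almost finiteness of images survives the spectral sequence bookkeeping, with each differential costing one shrinking. I would first try to cite the relative form directly from the proof of \cite[Lemma 5.8]{scholze2013p}, which is already stated for $V'\Subset V$ in the needed generality.
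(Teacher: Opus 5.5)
Your proposal is correct and follows the paper's proof. The paper likewise passes to an exhaustion with each $U_n$ strictly open in $U_{n+1}$, uses Scholze's relative finiteness to see that the image of $H^i_{\et}(U_{n+1},\mathcal O^+/p^m)$ in $H^i_{\et}(U_n,\mathcal O^+/p^m)$ is almost finitely generated, and concludes with Theorem \ref{vanishingalmost}.

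The relative statement you flag as the main obstacle in Step 2 is exactly \cite[Corollary 3.6]{scholze2018cohlubintate}, which the paper cites directly (with $\mathcal O^+/p^m$ coefficients), so you need not adapt the proper case yourself. Your d\'evissage from $p$ to $p^m$ in Step 3 is nonetheless valid. It relies on almost finitely generated $\mathcal O_K$-modules being closed under submodules and extensions, which holds over $\mathcal O_K$.
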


	\begin{proof}
		We may assume that $U_n$ is strictly open in $U_{n+1}$ for each $n \geq 0$. Then \cite[Corollary 3.6]{scholze2018cohlubintate} shows that the image of $H^i_{\et}(U_{n+1}, \mathcal{O}^+/p^m)$ in $H^i_{\et}(U_n, \mathcal{O}^+/p^m)$ is an almost finitely generated $\mathcal O_K$-module for each $n \geq 0$. Then we can apply Theorem \ref{vanishingalmost} to conclude.	
	\end{proof}

	We record the following result, which shows that one cannot expect $R^1\lim$ to be strictly equal to zero, even for open balls.

	\begin{prop} \label{opendiscsection}
		Fix $d\geq 1$. For $r>0$, let $B(r)$ denote the open ball of dimension $d$ and radius $r$ over $K$. Then the following claims are equivalent:
		
		(1) $K$ is spherically complete.
		
		(2) For every $r>0$ and every increasing sequence $\{U_n\}_{n\in\N}$ of affinoid open subspaces such that $B(r)=\bigcup_n U_n$, we have $$R^1\lim_n \bigl(\mathcal{O}^+(U_n)/p^m\bigr)=0$$ for every $m\geq 1$.
		
		(3) For every $r>0$ and every increasing sequence $\{U_n\}_{n\in\N}$ of affinoid open subspaces such that $B(r)=\bigcup_n U_n$, we have $$R^1\lim_n \mathcal{O}^+(U_n)=0.$$
	\end{prop}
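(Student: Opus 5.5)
We prove the cycle (1) $\Rightarrow$ (3) $\Rightarrow$ (2) $\Rightarrow$ (1). By Lemma \ref{choice} and the invariance of $R^1\lim$ under pro-isomorphism, it suffices to treat one convenient exhaustion for each $r$. We take $U_n=\mathbb B^d(r_n)$, closed balls of radii $r_n\in|K^\times|$ with $r_n\uparrow r$. Then
$$\mathcal O^+(U_n)=\Bigl\{\sum_\nu a_\nu T^\nu : |a_\nu| r_n^{|\nu|}\le 1,\ |a_\nu|r_n^{|\nu|}\to 0\Bigr\},$$
and after rescaling each $T_i$ by an element of norm $r_n$ this is a copy of $\mathcal O_K\langle T\rangle$. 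The transition maps are then $T^\nu\mapsto (r_n/r_{n+1})^{|\nu|}T^\nu$ up to units.

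For (1) $\Rightarrow$ (3), we do not expect $\mathcal O^+(U_n)$ itself to be linearly compact, since it is not finitely generated. Instead we compute $R^1\lim$ directly from the cokernel of $D=\mathrm{id}-\text{shift}$ on $\prod_n\mathcal O^+(U_n)$, working monomial by monomial. Let $(y_n)$ be given. We must solve $x_n-f_n(x_{n+1})=y_n$. Coefficientwise in $T^\nu$ this reads $x_{n,\nu}-c_{n,\nu}x_{n+1,\nu}=y_{n,\nu}$ with $|c_{n,\nu}|<1$ for $\nu\neq0$. The formal solution is $x_{n,\nu}=\sum_{k\ge n} c_{n,\nu}\cdots c_{k-1,\nu}\,y_{k,\nu}$. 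The difficulty is that the bounds $|x_{n,\nu}|r_n^{|\nu|}\le1$ and the decay in $\nu$ must hold. The terms $c\cdots c\,y_k$ have norm at most $r_n^{-|\nu|}$, so the sum converges in $\mathcal O_K$ whenever the products tend to $0$. This holds for $\nu\ne0$ precisely because $r_k\to r$ and the $c$'s accumulate. For $\nu=0$ the component is the constant system $\mathcal O_K$, which has no $R^1\lim$.

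The real obstruction is decay in $\nu$. Here we pass to the weaker norm at radius $r_{n-1}$: the sum lands in $\mathcal O^+(U_{n-1})$ with decaying coefficients. This shows that $\{\mathcal O^+(U_n)\}$ is pro-isomorphic to a system of ``bounded'' power series $\{\sum a_\nu T^\nu: |a_\nu|r_n^{|\nu|}\le 1\}$. That system is a product $\prod_\nu\mathcal O_K$, and with $\nu$ fixed each factor is an inverse system of finitely generated modules. This is where spherical completeness enters, via Corollary \ref{linearvanishing} together with the linear compactness of products (Tychonoff for linearly compact modules), giving $R^1\lim=0$.

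For (3) $\Rightarrow$ (2), we use $0\to\mathcal O^+(U_n)\xrightarrow{p^m}\mathcal O^+(U_n)\to\mathcal O^+(U_n)/p^m\to0$. Since $R^2\lim=0$ for $\N$-indexed systems, $R^1\lim$ of the quotient is a quotient of $R^1\lim\mathcal O^+(U_n)=0$.

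For (2) $\Rightarrow$ (1), suppose $K$ is not spherically complete, and pick a decreasing sequence of discs $a_k+\mathfrak b_k$ with empty intersection; we may take radii $\rho_k\downarrow\rho>0$. We build a class in $R^1\lim(\mathcal O^+(U_n)/p^m)$ supported on a single variable, of the form $y_n=\lambda_n T_1^{\nu_n}$. The coefficients and exponents are chosen so that solving $D(x)=y$ forces the $T_1^{\nu}$-coefficients of $x_n$ to be the successive centers $a_k$, suitably scaled, modulo shrinking ideals. A solution would then produce a point in $\bigcap_k(a_k+\mathfrak b_k)$, a contradiction. We fix $m=1$ and scale by powers of $p$ so that all the relevant discs fit inside $\mathcal O_K/p$.

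We expect this construction to be the main obstacle: matching the radii $r_n$ of the exhaustion with the radii $\rho_k$ of the nested discs so that the bound "$|\cdot|\le1$ mod $p$" encodes exactly membership in $a_k+\mathfrak b_k$.
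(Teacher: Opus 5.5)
Your steps (1)$\Rightarrow$(3) and (3)$\Rightarrow$(2) are sound, and together they replace the paper's (1)$\Rightarrow$(2)$\Rightarrow$(3). The paper instead uses finiteness of the images modulo $p^m$ plus Corollary \ref{linearvanishing}, followed by derived Nakayama. Your interleaving $B_{n+1}\to\mathcal O^+(U_n)\subset B_n$ with the bounded-coefficient systems $B_n\cong\prod_\nu\mathcal O_K$, which have diagonal transition maps, reduces $R^1\lim$ to a product over $\nu$ of rank-one systems. Each of these vanishes by Corollary \ref{linearvanishing}. Then $R^1\lim(\mathcal O^+(U_n)/p^m)$ is a quotient of $R^1\lim\mathcal O^+(U_n)$ because $R^2\lim=0$.

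You should delete the paragraph about the formal solution. There $|c_{n,\nu}\cdots c_{k-1,\nu}|=(r_n/r_k)^{|\nu|}\to(r_n/r)^{|\nu|}\neq0$, so the series need not converge. If it always converged, $R^1\lim$ would vanish over every $K$, contradicting the statement you are proving.

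The genuine gap is (2)$\Rightarrow$(1). In your cycle it is the only implication that turns non-spherical completeness into a nonzero class, and you leave it as an unexecuted plan.

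The proposed shape also needs correcting: the exponents cannot vary. By your own product decomposition, a nonzero class must already be nonzero in one fixed monomial component. If the $\nu_n$ are pairwise distinct, then $x_k:=\sum_{n\geq k}\lambda_nT_1^{\nu_n}$ converges in $\mathcal O^+(U_k)$, since $\nu_n\to\infty$ and $U_k$ has strictly smaller radius than $U_{k+1}$. Then $x_k-x_{k+1}=y_k$, so the class is zero.

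What is missing is the paper's Lazard-type construction in the single monomial $T_1$:
\begin{itemize}
\item Choose nested closed balls $D(x_n,r_n)$ with empty intersection and radii $r_n\in|K^\times|$ strictly decreasing to $\rho$. Completeness forces $\rho>0$.
\item Let $U_n$ be the closed ball of radius $r_n^{-1}$. These exhaust $B(\rho^{-1})$, which is why the statement must allow all radii $r$.
\item Put $f_n:=(x_n-x_{n+1})T_1$. Nesting gives $|x_n-x_{n+1}|\leq r_n$, hence $f_n\in\mathcal O^+(U_n)$.
\end{itemize}
If $g_n-g_{n+1}=f_n$, the $T_1$-coefficient $h_1$ of $g_1=f_1+\cdots+f_n+g_{n+1}$ satisfies $|h_1-(x_1-x_{n+1})|\leq r_{n+1}$ for all $n$. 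Hence $x_1-h_1\in\bigcap_n D(x_{n+1},r_{n+1})=\varnothing$, a contradiction.

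To run this modulo $p$, as your cycle requires, first discard finitely many balls so that $|p|r_1\leq\rho$. Lifting a mod-$p$ solution then introduces errors in $p\,\mathcal O^+(U_k)$. Their $T_1$-coefficients have norm at most $|p|r_k\leq|p|r_1\leq\rho<r_{n+1}$, so the same contradiction follows.

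Alternatively, do as the paper does: keep the integral argument for (3)$\Rightarrow$(1) and add (2)$\Rightarrow$(3). That step uses $R^1\lim(\mathcal O^+(U_n)/p)=(R^1\lim\mathcal O^+(U_n))/p$, the derived $p$-completeness of $R^1\lim\mathcal O^+(U_n)$, and derived Nakayama.
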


	\begin{proof}
		(1) $\Rightarrow$ (2): Fix $r>0$ and choose elements $\varpi_n\in K$ such that $|\varpi_n|$ strictly increases to $r$. Let $U_n:=\Spa(K\langle \varpi_n^{-1}T_1,...,\varpi_n^{-1}T_d\rangle, \mathcal{O}_K\langle \varpi_n^{-1}T_1,...,\varpi_n^{-1}T_d\rangle)$ be the closed ball of radius $|\varpi_n|$. By Lemma \ref{choice}, it suffices to show that $$R^1\lim_n\mathcal{O}_K[ \varpi_n^{-1}T_1,...,\varpi_n^{-1}T_d]/p^m=0.$$ One can check that the image of $\mathcal{O}^+(U_{n+1})/p^m$ in $\mathcal{O}^+(U_n)/p^m$ is a finite $\mathcal O_K$-module, and then one can apply Corollary \ref{linearvanishing}.

		(2) $\Rightarrow$ (3) follows from the derived Nakayama lemma, as $R^1\lim_n  \mathcal{O}^+(U_n)$ is derived $p$-adically complete, and $$(R^1\lim_n  \mathcal{O}^+(U_n))/p \simeq R^1\lim_n( \mathcal{O}^+(U_n)/p)=0.$$
		
		(3) $\Rightarrow$ (1): We adapt the nested-ball argument in \cite[Proposition 6]{lazard1962zeros}. Suppose, for a contradiction, that $K$ is not spherically complete. We can then choose a strictly decreasing sequence $r_n\in |K^\times|$ and centers $x_n\in K$ such that the closed balls $D(x_n,r_n)$ satisfy the following conditions for every $n\geq 1$:
		\begin{enumerate}
			\item[(a)] $D(x_n,r_n) \supset D(x_{n+1},r_{n+1})$;
			\item[(b)] $x_{n}\notin D(x_{n+1},r_{n+1})$;
			\item[(c)] $\bigcap_n D(x_n,r_n) =\varnothing.$
		\end{enumerate}
		Since $K$ is complete, $r:=\lim_n r_n>0$. Conditions (a) and (b) give $$r_{n+1}<|x_n-x_{n+1}|\leq r_n.$$
		
		Choose $\varpi_n\in K$ with $|\varpi_n|=r_n^{-1}$, and let $U_n$ be the closed ball of radius $|\varpi_n|$ centered at the origin. Then $\{U_n\}_{n\geq 1}$ is an increasing exhaustion of $B(r^{-1})$. Define $f_n:=(x_n-x_{n+1})T_1$. The above inequalities show that $f_n\in\mathcal O^+(U_n)$ but $f_n\notin\mathcal O^+(U_{n+1})$. By (3), there exists a sequence $g_n\in\mathcal O^+(U_n)$ satisfying $g_n-g_{n+1}=f_n$. Let $h_n$ be the coefficient of $T_1$ in $g_n$. Since $$g_1=f_1+\cdots+f_n+g_{n+1}=(x_1-x_{n+1})T_1+g_{n+1},$$ and $|h_{n+1}|\leq r_{n+1}$, we have $$|h_1-(x_1-x_{n+1})|\leq r_{n+1}.$$ Thus $x_1-h_1\in D(x_{n+1},r_{n+1})$ for every $n\geq 1$, contradicting (c). This completes the proof.
	\end{proof}
	
	\ifshowhidden

	\begin{rem} \label{unitballnotcomplete}
		In Proposition \ref{opendiscsection}, it does not suffice to require (2) or (3) only for $r=1$, even under our assumptions on $K$.

		Consider the subfield of the Hahn series field $\overline{\F}_p((t^{\Q}))$ given by
		$$F:=\left\{f\in\overline{\F}_p((t^{\Q})):\overline{\operatorname{supp}(f)}^{\,\R}\subseteq\Q\right\},$$
		where supports are well-ordered and $v_F(t)=1$. This is a complete perfect valued field with value group $\Q$ and residue field $\overline{\F}_p$. Indeed, the support condition is stable under sums, products, and $p$-th roots. Inversion follows by a geometric series, with only finitely many powers contributing below each valuation bound; Cauchy limits preserve the condition because bounded parts of the support stabilize.

		Every nested sequence of closed balls in $F$ whose valuation thresholds increase to a rational number $a$ has nonempty intersection. Indeed, the coefficients below $a$ can be read off compatibly from the centers, and setting the remaining coefficients to zero gives an element of $F$: the only possible new accumulation point of its support is $a$. On the other hand, choose rational numbers $0<q_n\uparrow\alpha<1$ with $\alpha\notin\Q$, and set
		$$x_n:=\sum_{j<n}t^{q_j},\qquad D_n:=\{x\in F:v_F(x-x_n)\geq q_n\}.$$
		The balls $D_n$ are nested and have empty intersection, since any common point would have all the $q_j$ in its support. Hence $F$ is not spherically complete.

		Let $E:=\widehat{\Q_p(\mu_{p^{\infty}})}$. Its tilt, a completed perfection of a Laurent series field over $\F_p$, admits a valued embedding into $F$. By \cite[Theorem 5.2 and Lemma 5.21]{scholze2012perfectoid}, we may untilt $F$ over $E$ to a perfectoid field $K$ of mixed characteristic. This field contains all roots of unity: the $p$-power roots come from $E$, and the prime-to-$p$ roots lift from $\overline{\F}_p$. The valuation-compatible isomorphism
		$$\mathcal O_F/I_{\geq 1}\simeq\mathcal O_K/p,\qquad I_{\geq 1}:=\{x\in\mathcal O_F:v_F(x)\geq 1\},$$
		from \cite[Lemma 3.4(iii)]{scholze2012perfectoid} transfers the preceding properties of nested balls to $K$. To see this, translate and scale the balls, omitting finitely many initial terms, so that their valuation thresholds and their limit lie in $(0,1)$; these balls and their intersections are detected in the truncated valuation rings. Consequently, $K$ is not spherically complete, but every nested sequence of closed balls whose radii decrease to $1$ has nonempty intersection.

		For $d=1$, take closed discs $U_n$ with strictly increasing radii $\rho_n\uparrow1$ in $|K^\times|$. Given $f_n=\sum_{j\geq0}a_{n,j}T^j\in\mathcal O^+(U_n)$, put $s_{n,j}:=\sum_{l<n}a_{l,j}$. For each $j$, the balls
		$$\{b\in K:|b-s_{n,j}|\leq\rho_n^{-j}\}$$
		are nested with radii tending to $1$, so choose $b_j$ in their intersection. Then $g_n:=\sum_{j\geq0}(b_j-s_{n,j})T^j$ belongs to $\mathcal O^+(U_n)$, since its coefficients satisfy the integral bound and
		$$|b_j-s_{n,j}|\rho_n^j\leq\max\left\{|a_{n,j}|\rho_n^j,\left(\frac{\rho_n}{\rho_{n+1}}\right)^j\right\}\longrightarrow0\qquad(j\to\infty).$$
		As $g_n-g_{n+1}=f_n$, we obtain $R^1\lim_n\mathcal O^+(U_n)=0$. The inverse-limit long exact sequence gives the same vanishing modulo $p^m$. The argument works in any dimension by replacing $j$ with the total degree of a multi-index, and Lemma \ref{choice} gives the result for every affinoid open exhaustion of $B(1)$.
	\end{rem}

	\fi

	\section{Applications and discussions}
	
	In this section, we discuss applications of our main results. We first prove a strengthened form of Riemann's extension theorem and compare it with its perfectoid analogue. We then study small pro-\'etale $G$-torsors on open varieties and discuss their relation to the local $p$-adic Simpson correspondence.

	\subsection{The first Riemann extension theorem} In this part, we take a close look at Theorems \ref{main} and \ref{mainlog}, and prove the following Hebbarkeitssatz.
	
	\begin{thm}[Riemann I] \label{riemann}
		Let $X$ be a normal affinoid adic space locally of finite type over $\Spa(K,K^+)$ and $D \subset X$ a Zariski-closed subset of $X$ with positive codimension everywhere such that $U:=X-D$ is smooth. Then for any increasing sequence $ \{U_n\}_{n\in\N}$ of affinoid adic spaces over $K$ such that $U=\bigcup_n U_n$, we have $$\mathcal{O}^+(U)=\mathcal{O}^+(X), \quad R^1\lim_n \mathcal{O}^+(U_n) \simeq^a 0.$$
	\end{thm}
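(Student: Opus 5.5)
The plan is to treat the two assertions separately. The equality $\mathcal O^+(U)=\mathcal O^+(X)$ is essentially classical; the content lies in the almost vanishing of $R^1\lim$. I will deduce the latter from the case $i=0$ of Theorem \ref{main}, first modulo $p$, and then lift it using the almost derived Nakayama lemma as in Proposition \ref{vanishingalmostfg2}. I expect the main obstacle to be the comparison in the second paragraph between $\mathcal O^+(U_n)/p$ and $H^0_{\et}(U_n,\mathcal O^+/p)$.

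\emph{Sections.} Let $f\in\mathcal O^+(U)$. Then $f$ is an analytic function on $U$ that is bounded by $1$. Since $X$ is normal and $D$ is nowhere dense Zariski-closed, the classical Riemann extension theorem for bounded functions extends $f$ to some $\tilde f\in\mathcal O(X)$. Because $U$ is dense in $X$ and $|\tilde f|\le 1$ on $U$, we get $|\tilde f|\le 1$ on $X$, so $\tilde f\in\mathcal O^+(X)$. The restriction map is injective by density of $U$. This proves $\mathcal O^+(U)=\mathcal O^+(X)$. Alternatively, one can read this off from $i=0$ of Remark \ref{remequiv} after passing to a resolution $X'\to X$ with strict normal crossing boundary, using normality of $X$ to get $\mathcal O^+(X')=\mathcal O^+(X)$; the classical route is shorter.

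\emph{Reduction modulo $p$.} Each $U_n$ is a smooth affinoid and $\mathcal O^+$ is $p$-torsion free. The sequence $0\to\mathcal O^+\xrightarrow{p}\mathcal O^+\to\mathcal O^+/p\to 0$ therefore gives
$$0\to \mathcal O^+(U_n)/p\to H^0_{\et}(U_n,\mathcal O^+/p)\to H^1_{\et}(U_n,\mathcal O^+)[p]\to 0,$$
compatibly in $n$. By Theorem \ref{main} with $i=0$, $m=1$, the middle pro-system is almost constant, so its $R^1\lim$ is almost zero. It remains to control the right-hand term; this is the step I expect to be the main obstacle. I would show that $H^1_{\et}(U_n,\mathcal O^+)$ is killed by $p^c$ for a constant $c$ independent of $n$. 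To do this, I would cover $U_n$ by finitely many toric charts and compute $H^1(U_n,\mathcal O^+/p^m)$ via $\Gamma'$-cohomology of the perfectoid cover, as in the previous section. The non-integral weight parts are killed by $\zeta-1$ with $v(\zeta-1)\le 1/(p-1)$, while the integral part contributes $H^1(\Gamma',A^\circ)$, which is almost $A^\circ$-free and maps to zero in $H^1(\mathcal O)=0$. Taking Čech spectral sequences should bound the torsion uniformly.

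Granting such a $c$, the map $\{H^1_{\et}(U_n,\mathcal O^+)[p^{c+1}]\}\to\{H^1_{\et}(U_n,\mathcal O^+)[p]\}$ induced by multiplication by $p^c$ is zero. From this one checks, with $p$ replaced by $p^{c+1}$ in the above sequence and using Theorem \ref{main} for $m=c+1$, that $\lim$ and $R^1\lim$ of $\{\mathcal O^+(U_n)/p^{c+1}\}$ agree almost with those of an almost constant pro-system up to $p^c$-torsion. This gives that $R^1\lim_n(\mathcal O^+(U_n)/p)$ is almost zero. Here I would also use the $R^0\lim$-surjectivity coming from $\mathcal O^+(U)/p\to\lim_n H^0(U_n,\mathcal O^+/p)$ being almost surjective, by Remark \ref{remequiv}.

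\emph{Lifting.} Since each $\mathcal O^+(U_n)$ is $p$-adically complete and $p$-torsion free, $R\lim_n\mathcal O^+(U_n)$ is derived complete, hence so is $R^1\lim_n\mathcal O^+(U_n)$. Moreover, $(R^1\lim_n\mathcal O^+(U_n))/p\hookrightarrow R^1\lim_n(\mathcal O^+(U_n)/p)$, because $R^2\lim=0$ for countable towers. The latter is almost zero by the previous step, so the almost derived Nakayama lemma gives $R^1\lim_n\mathcal O^+(U_n)\simeq^a0$.
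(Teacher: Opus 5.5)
\textbf{There is a genuine gap, in the reduction modulo $p$, at the step you flagged.} Your proof of $\mathcal O^+(U)=\mathcal O^+(X)$ (citing Bartenwerfer's classical extension theorem) is fine, and so is your lifting step (derived completeness of $R^1\lim_n\mathcal O^+(U_n)$ plus the almost derived Nakayama lemma).

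\emph{The missing uniform bound.} Everything hinges on a constant $c$, independent of $n$, with $p^c H^1_{\et}(U_n,\mathcal O^+)=0$. This is asserted, not proved, and your sketch does not produce it.

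If you cover each $U_n$ by its own toric charts, two sources of torsion constants appear. One is decompletion: $H^1_{\cts}(\Gamma',A^\circ_\infty)$ is free only up to bounded torsion, not ``almost free''. The other is the bounded-torsion \v{C}ech argument for $\mathcal O^+$ used in Proposition \ref{limitcoh}. Both constants depend on the charts and the covering, hence on $n$. Only the explicit annular factor of Subsection \ref{disc} gives an $n$-independent bound. To reach that model you would have to pass to a resolution of $(X,D)$ and have $U_n$ meet each tubular neighbourhood in exactly the standard form. Arbitrary affinoid $U_n$ do not do this, and Lemma \ref{choice} only compares pro-systems, not torsion exponents. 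Even then you would still need an $n$-independent bound for the \v{C}ech cohomology of $\mathcal O^+$ on the induced covers of $U_n$, which the argument of Proposition \ref{limitcoh} does not supply.

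Your phrase ``maps to zero in $H^1(\mathcal O)=0$'' also conflates \'etale and pro-\'etale cohomology, since $H^1_{\proet}(U_n,\widehat{\mathcal O})\neq 0$. The correct input is that the torsion group $H^1_{\et}(U_n,\mathcal O^+)$ injects into $H^1_{\proet}(U_n,\widehat{\mathcal O}^+)$.

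\emph{A false auxiliary claim.} You propose to use that $\mathcal O^+(U)/p\to\lim_n H^0_{\et}(U_n,\mathcal O^+/p)$ is almost surjective; this is false. By Remark \ref{remequiv} the target is almost $H^0_{\ket}(X,\mathcal O^+/p)$. Take $X=\mathbb B^1$, $D=\{0\}$, $p\geq 3$. Then $p^{(p-2)/(p-1)}T^{1/p}$ is $\mathbb Z_p$-invariant in $\mathcal O_K\langle T^{1/p^\infty}\rangle/p$, but it is not almost in the image of $\mathcal O_K\langle T\rangle/p$.

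You do not actually need this claim. Granted a uniform $c$, the reduction map $H^0_{\et}(U_n,\mathcal O^+/p^{m+c})\to H^0_{\et}(U_n,\mathcal O^+/p^m)$ lands in $\mathcal O^+(U_n)/p^m$. Hence the surjection $R^1\lim_n\mathcal O^+(U_n)/p^{m+c}\to R^1\lim_n\mathcal O^+(U_n)/p^m$ factors through $R^1\lim_n H^0_{\et}(U_n,\mathcal O^+/p^{m+c})\simeq^a 0$. So the uniform bound is exactly the missing idea.

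\emph{How the paper avoids this.} It needs no uniformity, because it computes $R\lim_n R\lim_m H^0_{\et}(U_n,\mathcal O^+/p^m)$ in both orders.
\begin{itemize}
\item For fixed $n$, Proposition \ref{limitcoh} applied to the single smooth affinoid $U_n$ gives $R\lim_m=\mathcal O^+(U_n)$. An $n$-dependent torsion bound suffices here.
\item For fixed $m$, Theorem \ref{mainlog} on a log smooth resolution gives $R\lim_n\simeq^a H^0_{\ket}(X,\mathcal O^+/p^m)$. Proposition \ref{limitcohlog} then yields $\mathcal O^+(X)$ after $R\lim_m$.
\end{itemize}
Thus $R\lim_n\mathcal O^+(U_n)\simeq^a\mathcal O^+(X)$. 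The discrepancy $H^1_{\et}(U_n,\mathcal O^+)[p^m]$ is absorbed by the limit over $m$ for each $n$ separately, rather than controlled uniformly in $n$.
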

	
	\begin{rem}
		The assumptions that $X$ is affinoid, $U$ is smooth and $U_n$ is affinoid for $n \in \N$ are not necessary for the first statement, but are necessary for proving the vanishing of $R^1\lim$.
	\end{rem}
	
	\begin{proof}
		The first statement is analytically local. Therefore, by the second Riemann extension theorem \cite[Theorem 10]{bartenwerfer1976riemannextension}, and because $X$ is normal, we may assume that $X$ and $D$ are smooth. The second statement on the vanishing of $R^1\lim$ is independent of $D$. Therefore, by the theorem of embedded desingularization \cite[Theorem 1.1.13]{temkin2018embeddedresolution}, we may assume that $X$ is smooth and $D$ is a strict normal crossing divisor. Thus, for both statements, we can assume that $X$ is smooth and $D$ is a strict normal crossing divisor. We endow $X$ with the log structure associated to $D$.
		
		We have $$R\lim_nR\lim_m H^0_{\et}(U_n,\mathcal{O}^+/p^m) \simeq^a R\lim_n \widehat{\mathcal{O}}^+(U_n) = R\lim_n {\mathcal{O}}^+(U_n),$$ where the first quasi-isomorphism follows from Proposition \ref{limitcoh} below, and the second quasi-isomorphism follows from \cite[Corollary 6.19]{scholze2013p} and \cite[Lemma 4.2]{scholze2013p}. On the other hand, by Theorem \ref{mainlog} and Proposition \ref{limitcohlog} below\footnote{$X$ is not affinoid in general, but by the theorem of embedded desingularization, $X$ is obtained by a sequence of smooth blow-ups of a normal affinoid variety $X'$. If we set $f:X\to X'$, then $f_*\mathcal{O}_X =\mathcal{O}_{X'}$. This moreover implies that $f_*\mathcal{O}_X^+ =\mathcal{O}_{X'}^+$. On the other hand, the relative version of the primitive comparison theorem shows that $f_*(\mathcal{O}_X^+/p^m)\simeq^a \mathcal{O}_{X'}^+/p^m.$ We can therefore apply Proposition \ref{limitcoh} to $X$.}, $$ R\lim_mR\lim_n H^0_{\et}(U_n,\mathcal{O}^+/p^m) \simeq^a R\lim_m H^0_{\ket}(X,\mathcal{O}^+/p^m)\simeq\widehat{\mathcal{O}}^+(X)=\mathcal{O}^+(X).$$
		Therefore, we have $$\mathcal{O}^+(U)=\lim_n \mathcal{O}^+(U_n)\simeq^a\mathcal{O}^+(X), \quad R^1\lim_n \mathcal{O}^+(U_n) \simeq^a 0.$$
		On the other hand, the fact that $\mathcal{O}^+(X)$ is almost isomorphic to $\mathcal{O}^+(U)$ means that for any $f\in \mathcal{O}^+(U)$ and $\epsilon >0$, $p^{\epsilon}f \in \mathcal{O}^+(X)$. This shows that $f \in \mathcal{O}^+(X)$. Therefore we have an (honest) isomorphism between $\mathcal{O}^+(X)$ and $\mathcal{O}^+(U)$. This finishes the proof of the Hebbarkeitssatz.
	\end{proof}
	
	\begin{rem}
		The same proof also shows that if we moreover assume that $D$ is a strict normal crossing divisor of $X$, then for $i\geq 1$, we have $$R^1\lim_n H^i_{\proet}(U_n,\widehat{\mathcal{O}}^+)\simeq^a0.$$
	\end{rem}

	\begin{prop} \label{limitcoh}
		Let $X$ be a smooth affinoid or smooth proper adic space over $\Spa(K,K^+).$ Then $$R^1\lim_m H^i_{\et}(X,\mathcal{O}^+/p^m) = 0 $$ for any $i \geq 0.$ In particular, for $i\geq 0$ we have $$H^i_{\proet}(X,\widehat{\mathcal{O}}^+) \simeq \lim_m H^i_{\et}(X,\mathcal{O}^+/p^m).$$
	\end{prop}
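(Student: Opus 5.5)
The plan is to show that for each $i$ the inverse system $\{H^i_{\et}(X,\mathcal O^+/p^m)\}_{m\geq 1}$ satisfies an almost Mittag-Leffler-type condition strong enough to give honest vanishing of $R^1\lim$. The finiteness input comes from the primitive comparison theorem and its affinoid analogue. I would then deduce the second statement from the identity $\rg_{\proet}(X,\widehat{\mathcal O}^+)\simeq R\lim_m \rg_{\et}(X,\mathcal O^+/p^m)$, which holds by \cite[Lemma 4.2 and Corollary 6.19]{scholze2013p} since $\widehat{\mathcal O}^+=\lim_m \mathcal O^+/p^m$ with vanishing $R^1\lim$ on affinoid perfectoids. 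The Milnor sequence
$$0\to R^1\lim_m H^{i-1}_{\et}(X,\mathcal O^+/p^m)\to H^i_{\proet}(X,\widehat{\mathcal O}^+)\to \lim_m H^i_{\et}(X,\mathcal O^+/p^m)\to 0$$
then gives the second claim once the first is known.

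\emph{Proper case.} By the primitive comparison theorem \cite[Theorem 5.1]{scholze2013p}, $H^i_{\et}(X,\mathcal O^+/p^m)\simeq^a H^i_{\et}(X,\Z/p^m)\otimes\mathcal O_K/p^m$. The groups $H^i_{\et}(X,\Z/p^m)$ are finite, so $\{H^i_{\et}(X,\Z/p^m)\}_m$ is Mittag-Leffler. Tensoring with $\mathcal O_K$ gives a system of finitely presented $\mathcal O_K$-modules whose transition maps are surjective up to finitely generated defects. I will aim to write $\{H^i_{\et}(X,\mathcal O^+/p^m)\}_m$ as an almost-isomorphic perturbation of a system with surjective transitions. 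The $R^1\lim$ of that system is zero, so Theorem \ref{vanishingalmost} gives almost vanishing, and the real difficulty is upgrading this to honest vanishing. Here I would use that $R^1\lim$ of a system of $p$-power torsion modules killed by bounded powers is computed by the cokernel of $1-$shift on products. Almost zero modules $M$ with $\mathfrak m M=0$ have vanishing $R^1\lim$, since such systems are killed by the surjection $\mathfrak m\otimes-$ argument. More concretely, I would check that the kernels and cokernels of the almost isomorphism form systems of modules killed by $\mathfrak m$. These satisfy $R^1\lim=0$ because we may replace them by $\mathfrak m\otimes$ of themselves, which is zero, and $R\lim$ commutes with $\mathfrak m\otimes_{\mathcal O_K}-$ for the flat module $\mathfrak m$ up to almost-isomorphisms of the system.

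\emph{Affinoid case.} For smooth affinoid $X$, \cite[Lemma 4.10 and Proposition 4.12]{scholze2013p} shows that $H^i_{\et}(X,\mathcal O^+/p^m)$ is almost equal to $H^i_{\cts}(\Gamma',A_\infty^\circ/p^m)$ computed by a Koszul complex. Here I would work locally on a toric chart and glue by \v Cech with finitely many pieces, using Lemma \ref{prospectral}-style arguments. For $i=0$ the system $\mathcal O^+(X)/p^m$ has surjective transitions, giving vanishing. For $i\geq 1$ the cohomology is almost $p$-torsion-controlled: $H^i$ of the Koszul complex of $A_\infty^\circ$ is, up to almost isomorphism, $\Omega^i$-type terms $\bigwedge^i(A^\circ)^{d'}(-i)$ (integral modulo bounded torsion), plus terms killed by $\zeta-1$. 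I would show the transition maps $H^i(\cdot/p^{m+1})\to H^i(\cdot/p^m)$ are almost surjective modulo a uniformly bounded torsion, and then apply the previous perturbation argument.

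The main obstacle is exactly the passage from almost vanishing, which Theorem \ref{vanishingalmost} readily supplies, to strict vanishing of $R^1\lim_m$. I would try first to exploit that $\{H^i(X,\mathcal O^+/p^m)\}_m$ arises as $H^i$ of the reductions of one derived $p$-complete complex $C=\rg_{\proet}(X,\widehat{\mathcal O}^+)$. In that setting $R^1\lim_m H^{i}(C/p^m)$ is a quotient of $T_p$-type data of $H^{i+1}(C)[p^\infty]$, which vanishes whenever $H^{i+1}(C)$ has bounded $p$-torsion; this bounded torsion would be checked via the explicit almost descriptions above.
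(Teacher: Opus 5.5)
\textbf{There is a genuine gap: your mechanism for upgrading almost vanishing to honest vanishing does not work.}

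\emph{An inverse system of modules killed by $\mathfrak m$ can have nonzero $R^1\lim$.} Take $M_n=\bigoplus_{k\geq n}\kappa\subset\bigoplus_{k\geq 0}\kappa$ with the inclusions as transition maps. Comparing with the constant system $\bigoplus_{k\geq0}\kappa$ gives $R^1\lim_n M_n\simeq\prod_k\kappa/\bigoplus_k\kappa\neq 0$. Your replacement $M\rightsquigarrow\mathfrak m\otimes_{\mathcal O_K}M$ is itself only an almost isomorphism. Moreover, $\mathfrak m\otimes-$ does not commute with the infinite products computing $R^1\lim$, since $\mathfrak m$ is not finitely generated. So that argument at best shows $R^1\lim$ is almost zero, which you already had.

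More fundamentally, an almost isomorphism of inverse systems only ever gives an almost isomorphism on $R^1\lim$. Hence the ``perturbation'' strategy in both your proper and affinoid cases can reproduce Theorem~\ref{vanishingalmost} and nothing more. The case $i=0$ is also not free. $H^0_{\et}(X,\mathcal O^+/p^m)$ is not $\mathcal O^+(X)/p^m$; it is an extension of $H^1_{\et}(X,\mathcal O^+)[p^m]$ by $\mathcal O^+(X)/p^m$.

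\textbf{The idea in your last paragraph is the correct one, and it is the paper's route.} One has
\[0\to H^i_{\proet}(X,\widehat{\mathcal O}^+)/p^m\to H^i_{\et}(X,\mathcal O^+/p^m)\to H^{i+1}_{\proet}(X,\widehat{\mathcal O}^+)[p^m]\to 0 .\]
The first system has surjective transitions, so $R^1\lim_m H^i_{\et}(X,\mathcal O^+/p^m)\simeq R^1\lim_m H^{i+1}_{\proet}(X,\widehat{\mathcal O}^+)[p^m]$. The latter system has multiplication by $p$ as transitions, so it is pro-zero once $H^{i+1}_{\proet}(X,\widehat{\mathcal O}^+)$ has bounded $p$-torsion.

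Bounded torsion is insensitive to almost isomorphism, so almost information suffices. But it must be produced for the global group, and that is the substantive part of the proof; you leave it as ``would be checked''.

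\emph{Proper case.} Passing primitive comparison to the limit gives $H^{i+1}_{\proet}(X,\widehat{\mathcal O}^+)\simeq^a H^{i+1}_{\et}(X,\Z_p)\otimes_{\Z_p}\mathcal O_K$, which has bounded torsion. (The paper's own proper case only invokes Theorem~\ref{vanishingalmost}, which yields only almost vanishing.)

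\emph{Affinoid case.} Your Koszul descriptions are only local on toric charts. Lemma~\ref{prospectral} controls constancy of pro-systems, not torsion bounds, so it cannot do the gluing. The paper uses the \v Cech spectral sequence of a finite rational cover by toric charts:
\begin{enumerate}
\item $E_1^{i,j}=H^j_{\proet}(X^i,\widehat{\mathcal O}^+)$ agrees with $\bigwedge^j(R_i^+)^d$ up to kernel and cokernel killed by a fixed power of $p$.
\item The \v Cech complexes $\bigwedge^j(R_\bullet^+)^d$ have bounded-torsion cohomology, which is moreover torsion in positive \v Cech degree, by the argument of \cite[Proposition 6.10]{scholze2012perfectoid}.
\item This torsion property keeps the subquotients on later pages, and hence the abutment, bounded. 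A quotient of a bounded-torsion module need not have bounded torsion, whereas a subquotient of a module killed by $p^c$ is killed by $p^c$.
\end{enumerate}
Without this step the proposal does not establish the statement.
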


	\begin{proof}
		We first consider the case where $X$ is smooth affinoid. The exact sequence on $X_{\proet}$ $$0 \to \widehat{\mathcal{O}}^+ \xrightarrow{p^m} \widehat{\mathcal{O}}^+ \to \widehat{\mathcal{O}}^+/p^m \to 0$$ gives $$0 \to  H^i_{\proet}(X,\widehat{\mathcal{O}}^+)/p^m \to  H^i_{\et}(X,{\mathcal{O}}^+/p^m) \to H^{i+1}_{\proet}(X, \widehat{\mathcal{O}}^+)[p^m] \to 0,$$ which gives $$R^1\lim_m H^i_{\et}(X,\mathcal{O}^+/p^m) \simeq R^1\lim_m (H^{i+1}_{\proet}(X, \widehat{\mathcal{O}}^+)[p^m]).$$
		To prove the proposition, it suffices to show that for any $i \geq 0$, the cohomology group $H^{i}_{\proet}(X, \widehat{\mathcal{O}}^+)$ has bounded $p$-torsion when $X$ is smooth affinoid. 
		
	    Let $X=\Spa(R,R^+).$ Take a finite cover $X^0=\Spa(R_0,R^{+}_0)\to X$ by rational subdomains such that $X^0$ is a finite disjoint union of toric charts (i.e., $X^0=\bigsqcup U_s$, where each $U_s$ admits an \'etale map $U_s \to \mathbb T^d:=\Spa(K\langle T_1^{\pm 1},...,T_d^{\pm 1}\rangle,K^+\langle T_1^{\pm 1},...,T_d^{\pm 1}\rangle)$ which can be written as a composition of rational embeddings and finite \'etale maps). The open cover $X^0\to X$ gives a \v Cech nerve $X^{\bullet}\to X.$ The \v Cech-to-cohomology spectral sequence gives $$E_1^{i,j}:=H^j_{\proet}(X^i,\widehat{\mathcal{O}}^+)\Rightarrow H^{i+j}_{\proet}(X,\widehat{\mathcal{O}}^+).$$
		
		We claim that $E_2^{i,j}$ has bounded torsion for each pair $i,j\geq 0$, and that $E_2^{i,j}$ is a torsion group for $i >0.$ For $j=0$, this follows from (the proof of) \cite[Proposition 6.10.(iii)]{scholze2012perfectoid}. In general, write $X^i=\Spa(R_i,R^{+}_i)$ for $i \geq 1$. For each pair $i,j\geq 0$, according to \cite[Lemma 2.14.(iii)]{heuer2026moduli} (which is basically a combination of \cite[Lemma 4.5]{scholze2013p} and \cite[Lemma 5.5]{scholze2013p}), the kernel and cokernel of $$f_{i,j}: \bigwedge^j (R_i^+)^d \to E_1^{i,j}$$ are killed by some $p^c$ ($c$ only depends on $i$). Fix $j$, and let $C^{\bullet,j}$ be the cone of $f_{\bullet,j}$. Then the cohomology of $C^{\bullet,j}$ in each degree is killed by some power of $p$. Applying the argument of \cite[Proposition 6.10.(iii)]{scholze2012perfectoid} again, we see that $H^i(\bigwedge^j (R_{\bullet}^+)^d)$ also has bounded torsion for $i\geq 0$, and is a torsion group for $i >0.$ Then the long exact sequence $$\dots \to H^{i-1}(C^{\bullet,j})\to  H^i(\bigwedge^j (R_{\bullet}^+)^d) \to E_2^{i,j} \to H^i(C^{\bullet,j}) \to \dots$$ shows that $E_2^{i,j}$ has bounded torsion for $i\geq 0$, and is a torsion group for $i >0.$ This completes the proof of our claim.
		
		This implies that on each page $a\geq 2$, $E_a^{i,j}$ has bounded torsion for each $i,j\geq 0$, and is a torsion group for $i >0.$ Therefore the cohomology group $H^{i}_{\proet}(X, \widehat{\mathcal{O}}^+)$ has bounded $p$-torsion for each $i\geq 0$, as $H^{i}_{\proet}(X,\widehat{\mathcal{O}}^+)$ is obtained from the groups $E_a^{i-k,k}$ for $k \geq 0$ and some $a\in \N$ by successively taking extensions. This completes the proof when $X$ is affinoid.
		
		When $X$ is smooth proper, by repeatedly using the exact sequences $$0 \to \widehat{\mathcal{O}}^+/p^m \to \widehat{\mathcal{O}}^+/p^{m+1} \to \widehat{\mathcal{O}}^+/p \to 0$$ and the primitive comparison theorem \cite[Theorem 5.1]{scholze2013p}, one sees that $H^{i}_{\proet}(X, \widehat{\mathcal{O}}^+/p^m)$ is an almost finitely generated $\mathcal O_K$-module for each $m \geq 1$ by \cite[Proposition 2.6]{scholze2013p}. Then Theorem \ref{vanishingalmost} shows that $$R^1\lim_mH^{i}_{\proet}(X, \widehat{\mathcal{O}}^+/p^m)\simeq^a 0.$$ This completes the proof. 
	\end{proof}
	
	The same proof goes through in the logarithmic setting.
	
	\begin{prop} \label{limitcohlog}
		Let $X$ be an fs log adic space that is log smooth and either affinoid or proper over $\Spa(K,K^+).$ Then $$R^1\lim_m H^i_{\ket}(X,\mathcal{O}^+/p^m) = 0 $$ for any $i \geq 0.$ In particular, for $i\geq 0$ we have $$H^i_{\proket}(X,\widehat{\mathcal{O}}^+) \simeq \lim_m H^i_{\ket}(X,\mathcal{O}^+/p^m).$$
	\end{prop}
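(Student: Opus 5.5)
We follow the proof of Proposition \ref{limitcoh} almost verbatim, replacing étale by Kummer étale cohomology, pro-étale by pro-Kummer étale cohomology, and the toric charts $\mathbb T^d$ by log toric charts. First, the short exact sequence $0\to\widehat{\mathcal O}^+\xrightarrow{p^m}\widehat{\mathcal O}^+\to\widehat{\mathcal O}^+/p^m\to 0$ on $X_{\proket}$ gives
$$0\to H^i_{\proket}(X,\widehat{\mathcal O}^+)/p^m\to H^i_{\ket}(X,\mathcal O^+/p^m)\to H^{i+1}_{\proket}(X,\widehat{\mathcal O}^+)[p^m]\to 0.$$
This uses the comparison $H^i_{\proket}(X,\widehat{\mathcal O}^+/p^m)\simeq^a H^i_{\ket}(X,\mathcal O^+/p^m)$ from \cite{dllz2023logadic}, which suffices for the vanishing of $R^1\lim$ up to the almost issue handled as in the non-log case. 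The system $\{H^i/p^m\}_m$ has surjective transitions, so its $R^1\lim$ vanishes. Hence the first claim reduces to showing that $\{H^{i+1}_{\proket}(X,\widehat{\mathcal O}^+)[p^m]\}_m$ has vanishing $R^1\lim$. This holds as soon as the $p$-power torsion of $H^{i+1}_{\proket}(X,\widehat{\mathcal O}^+)$ is bounded, since then the system is Mittag-Leffler. The "in particular" statement then follows from $R\Gamma_{\proket}(X,\widehat{\mathcal O}^+)\simeq R\lim_m R\Gamma_{\proket}(X,\widehat{\mathcal O}^+/p^m)$ and the Milnor sequence.

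\textbf{Affinoid case.} Cover $X$ by finitely many rational subdomains admitting strictly étale maps (compositions of rational embeddings and finite étale maps) to log toric charts $\Spa(K\langle P\rangle,\mathcal O_K\langle P\rangle)$ for toric monoids $P$. This is possible by the local structure of log smooth spaces in \cite{dllz2023logadic}. Form the Čech nerve $X^\bullet$; each $X^i$ is again affinoid and log smooth with a chart. On each chart the pro-Kummer étale cover by adjoining all roots of $P$ (and of the torus coordinates) is log affinoid perfectoid with Galois group $\widehat{\Z}(1)^{r}$. The almost purity and group cohomology computation of \cite[Lemma 3.10]{olsson2009faltings}, as used in the key computation of Subsection \ref{disc}, then shows the log analogue of \cite[Lemma 2.14.(iii)]{heuer2026moduli}: the map $\bigwedge^j(R_i^+)^r\to H^j_{\proket}(X^i,\widehat{\mathcal O}^+)$ has kernel and cokernel killed by a fixed power of $p$. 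The torsion analysis of the Čech spectral sequence then runs exactly as before. The $E_2^{i,j}$ terms have bounded torsion, and are torsion for $i>0$, by the argument of \cite[Proposition 6.10.(iii)]{scholze2012perfectoid} applied to the complexes $\bigwedge^j(R_\bullet^+)^r$ and the cones. Finitely many extensions then give bounded torsion of the abutment.

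\textbf{Proper case.} Use the log primitive comparison theorem \cite{dllz2023logadic}: $H^i_{\ket}(X,\mathcal O^+/p)$ is almost finitely generated, in fact almost isomorphic to $H^i_{\ket}(X,\F_p)\otimes\mathcal O_K/p$. Dévissage along $0\to\mathcal O^+/p^m\to\mathcal O^+/p^{m+1}\to\mathcal O^+/p\to0$ and \cite[Proposition 2.6]{scholze2013p} then give almost finite generation of $H^i_{\ket}(X,\mathcal O^+/p^m)$. Theorem \ref{vanishingalmost} then yields the almost vanishing of $R^1\lim_m$.

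\textbf{Main obstacle.} The main obstacle is the log version of the local computation with uniform bounded torsion in the affinoid case, namely controlling the kernel and cokernel of $\bigwedge^j(R^+)^r\to H^j_{\proket}$ for log toric charts where $P$ is not free. We would first try to reduce to the free case $P=\N^a\oplus\Z^b$ via resolution of toric singularities and log étale descent \cite{xslogdescent}, as in Subsection \ref{descent}. In the free case the Koszul computation of Subsection \ref{disc} (the $\mathbf w$-weight decomposition) explicitly gives torsion bounded by $\zeta_p-1$.
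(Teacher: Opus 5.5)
Your proposal is correct and follows the paper's route. The paper only says that the proof of Proposition \ref{limitcoh} goes through verbatim in the log setting: use the $p^m$-sequence to reduce to bounded $p$-torsion of $H^{i+1}_{\proket}(X,\widehat{\mathcal O}^+)$, get this in the affinoid case from the \v Cech spectral sequence and the local Koszul computation, and in the proper case use the log primitive comparison together with Theorem \ref{vanishingalmost}. That last step gives only almost vanishing there, in the paper as in your argument.

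Your fallback through toric resolution is unnecessary, and it would be awkward because the resolved space is no longer affinoid. The weight decomposition works directly for any saturated $P$: the $\Gamma$-invariant weights in $P_{\Q_{\geq 0}}$ are exactly those in $P$, and every other weight contributes cohomology killed by $\zeta_p-1$.
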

	
	An interesting corollary of the proof of Proposition \ref{limitcoh} is the uniform boundedness of torsion in the \'etale cohomology of the integral structure sheaf.
	
	\begin{cor}
		Let $X$ be a smooth affinoid adic space over $\Spa(K,K^+)$ of dimension $d$. Then there is some $c>0$ such that $p^c\cdot H^1_{\et}(X,{\mathcal{O}}^+)=0.$ Moreover, for $2 \leq i\leq d$, the group $H^i_{\et}(X,{\mathcal{O}}^+)$ is always unbounded torsion.
	\end{cor}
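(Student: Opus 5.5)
The plan is to reduce everything to the pro-étale cohomology of $\widehat{\mathcal O}^+$, which the proof of Proposition \ref{limitcoh} controls, by working on $X_{\et}$ with the short exact sequence
$$0\to \mathcal O^+\to \mathcal O^+[1/p]=\mathcal O\to \mathcal O^+\otimes_{\Z_p}\Q_p/\Z_p\to 0 .$$
Since $X$ is affinoid, $H^i_{\et}(X,\mathcal O)=0$ for $i\geq 1$ (Tate acyclicity plus étale descent). Cohomology commutes with filtered colimits on the quasi-compact quasi-separated site $X_{\et}$, and $\mathcal O^+\otimes\Q_p/\Z_p=\colim_m \mathcal O^+/p^m$, with transition maps given by multiplication by $p$. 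The long exact sequence therefore gives
$$H^i_{\et}(X,\mathcal O^+)\simeq \colim_m H^{i-1}_{\et}(X,\mathcal O^+/p^m)\quad (i\geq 2),\qquad H^1_{\et}(X,\mathcal O^+)\simeq \operatorname{coker}\Big(\mathcal O(X)\to \colim_m H^0_{\et}(X,\mathcal O^+/p^m)\Big).$$
In particular every $H^i_{\et}(X,\mathcal O^+)$ with $i\geq1$ is torsion.

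Next I identify $H^{j}_{\et}(X,\mathcal O^+/p^m)$ with $H^j_{\proet}(X,\widehat{\mathcal O}^+/p^m)$ (standard, \cite{scholze2013p}). I then use the exact sequence from the proof of Proposition \ref{limitcoh}:
$$0\to H^j_{\proet}(X,\widehat{\mathcal O}^+)/p^m\to H^j_{\et}(X,\mathcal O^+/p^m)\to H^{j+1}_{\proet}(X,\widehat{\mathcal O}^+)[p^m]\to 0 .$$
These sequences are compatible with the transition maps: multiplication by $p$ on the right-hand terms, and the maps induced by $p$ on the left-hand terms. Passing to the filtered colimit, which is exact, yields
$$0\to H^j_{\proet}(X,\widehat{\mathcal O}^+)\otimes\Q_p/\Z_p\to \colim_m H^j_{\et}(X,\mathcal O^+/p^m)\to H^{j+1}_{\proet}(X,\widehat{\mathcal O}^+)[p^\infty]\to 0 .$$
The proof of Proposition \ref{limitcoh} shows that each $H^{j}_{\proet}(X,\widehat{\mathcal O}^+)$ has $p$-torsion killed by some $p^{c_j}$, so the right-hand term is killed by $p^{c_{j+1}}$.

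For $i=1$ (so $j=0$), note that $H^0_{\proet}(X,\widehat{\mathcal O}^+)=\mathcal O^+(X)$, and that $\mathcal O(X)\to \mathcal O^+(X)\otimes\Q_p/\Z_p$ is surjective. Hence the cokernel defining $H^1_{\et}(X,\mathcal O^+)$ is a quotient of $H^1_{\proet}(X,\widehat{\mathcal O}^+)[p^\infty]$, which is killed by $p^{c}$ with $c=c_1$. This proves the first claim.

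For $2\leq i\leq d$ (so $1\leq j=i-1\leq d-1$), $H^i_{\et}(X,\mathcal O^+)$ contains the $p$-divisible torsion group $H^{i-1}_{\proet}(X,\widehat{\mathcal O}^+)\otimes\Q_p/\Z_p$. This subgroup is nonzero as soon as $H^{i-1}_{\proet}(X,\widehat{\mathcal O}^+)[1/p]\neq 0$. By \cite[Lemma 2.14.(iii)]{heuer2026moduli} (equivalently, the Hodge--Tate computation $H^{j}_{\proet}(X,\widehat{\mathcal O})\simeq \Omega^{j}(X)(-j)$), this rational group is $\Omega^{i-1}(X)(-(i-1))$, which is nonzero on a smooth affinoid of dimension $d$ when $i-1\leq d$. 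A nonzero divisible torsion group has unbounded torsion, giving the second claim.

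The main obstacle is bookkeeping. First, I must check that the compatible short exact sequences really pass to the colimit with multiplication-by-$p$ transitions, in particular that the map on the left terms is the one inducing $\otimes\Q_p/\Z_p$. Second, I must confirm the rational nonvanishing of $H^{i-1}_{\proet}(X,\widehat{\mathcal O}^+)$. For the latter I would argue locally on a toric chart via the almost isomorphism with $\bigwedge^{i-1}(R^+)^d$. I would then globalize using the Čech spectral sequence from the proof of Proposition \ref{limitcoh}: after inverting $p$, the positive-degree Čech terms vanish because those $E_2$ terms are torsion. This leaves the coherent global sections $\Omega^{i-1}(X)$, which are nonzero.
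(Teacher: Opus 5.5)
Your route differs from the paper's. The paper uses the Leray spectral sequence for $\nu\colon X_{\proet}\to X_{\et}$. From $\nu_*\widehat{\mathcal O}^+=\mathcal O^+$ it gets the edge injection $H^1_{\et}(X,\mathcal O^+)\hookrightarrow H^1_{\proet}(X,\widehat{\mathcal O}^+)$. For $i\geq 2$ it uses the \emph{integral} identification $R^i\nu_*\widehat{\mathcal O}^+\simeq\Omega^i_X(-i)$, so the unbounded torsion of $H^i_{\et}(X,\mathcal O^+)$ comes from the image of the transgression $\Omega^{i-1}(X)(1-i)\to H^i_{\et}(X,\mathcal O^+)$.

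You instead use $0\to\mathcal O^+\to\mathcal O\to\mathcal O/\mathcal O^+\to 0$ and the colimit of the mod $p^m$ Bockstein sequences. This avoids the integral computation, needs only rational Hodge--Tate cohomology, and gives sharper output. Namely, $H^1_{\et}(X,\mathcal O^+)\simeq H^1_{\proet}(X,\widehat{\mathcal O}^+)[p^\infty]$, and for $i\geq 2$ you get an extension of the bounded group $H^i_{\proet}(X,\widehat{\mathcal O}^+)[p^\infty]$ by the divisible group $H^{i-1}_{\proet}(X,\widehat{\mathcal O}^+)\otimes\Q_p/\Z_p$. The first claim of the corollary is fully proved this way.

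One correction. In the morphism of short exact sequences inducing $\mathcal O^+/p^m\xrightarrow{p}\mathcal O^+/p^{m+1}$, the map on the subobject $\widehat{\mathcal O}^+$ is the identity. So the transition maps on $H^{j+1}_{\proet}(X,\widehat{\mathcal O}^+)[p^m]$ are the inclusions, not multiplication by $p$. With multiplication by $p$, the colimit of these bounded groups would be $0$. Your displayed colimit $[p^\infty]$ is nevertheless the right one.

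There is a genuine, though repairable, gap in the second claim. The assertion that $M\otimes\Q_p/\Z_p\neq 0$ as soon as $M[1/p]\neq 0$ is false in general (take $M=\Q_p$). Since $M\otimes\Q_p/\Z_p=\operatorname{coker}(M\to M[1/p])$, you need $M/M[p^\infty]$ to be non-$p$-divisible. Neither the rational computation nor your toric-chart globalization gives this. The rational nonvanishing you call the main obstacle is in fact the easy part: cohomology of the qcqs object $X$ of $X_{\proet}$ commutes with $[1/p]$, and $H^{j}_{\proet}(X,\widehat{\mathcal O})\simeq\Omega^j(X)(-j)$ on an affinoid.

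The missing input is $p$-adic separatedness of $M=H^{i-1}_{\proet}(X,\widehat{\mathcal O}^+)$. Since $R\Gamma_{\proet}(X,\widehat{\mathcal O}^+)\simeq R\lim_m R\Gamma_{\proet}(X,\widehat{\mathcal O}^+/p^m)$ is derived $p$-complete, so is $M$. As $M[p^\infty]$ is killed by some $p^c$, $M$ is then classically $p$-complete, so $\bigcap_k p^kM=0$. Now suppose $M\to M[1/p]$ were surjective. Then $p^cM\to M/M[p^\infty]\simeq M[1/p]$ would be an isomorphism onto a $p$-divisible group. Hence $p^cM\subset\bigcap_k p^kM=0$, forcing $\Omega^{i-1}(X)=0$, a contradiction for $1\leq i-1\leq d-1$.

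With this added, your proof is complete. The paper's Leray argument implicitly needs the same input to see that the transgression is nonzero.
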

	
	\begin{proof}
		Tate's acyclicity theorem shows that for $i>0$ we have $H^i_{\et}(X,{\mathcal{O}}^+)[1/p]=H^i_{\et}(X,{\mathcal{O}})=0$, so $H^i_{\et}(X,{\mathcal{O}}^+)$ is torsion. On the other hand, \cite[Proposition 3.23]{scholze2013perfectoidspacessurvey} implies $\nu_*\widehat{\mathcal{O}}^+={\mathcal{O}}^+,$ where $\nu$ is the natural morphism of sites $X_{\proet}\to X_{\et}.$ Then the Leray spectral sequence gives an injection $H^1_{\et}(X,{\mathcal{O}}^+) \hookrightarrow H^1_{\proet}(X,\widehat{\mathcal{O}}^+),$ which completes the proof. For the second statement,  use\footnote{Use $\mathcal \nu^{-1}(O_X/\mathcal O_X^+)\simeq \widehat{\mathcal O}_X/\widehat{\mathcal O}_X^+$ and \cite[Corollary 3.17]{scholze2013p}.} $R^i\nu_*\widehat{\mathcal{O}}^+=\Omega_X^i(-i)$ for $i\geq 1$, and the Leray spectral sequence $$E_2^{i,j}:=H^j_{\et}(X,R^i\nu_*\widehat{\mathcal{O}}^+)\Rightarrow H^{i+j}_{\proet}(X,\widehat{\mathcal{O}}^+) $$ to conclude.
	\end{proof}

	\subsection{Hebbarkeitssatz for perfectoid algebras} \label{riemannperf}
	
	We compare our results with the ``quantitative form of Scholze's Riemann extension theorem'' proved in \cite[Theorem 4.2]{bhatt2018directsumconj}. In the situation below, the calculation of Subsection \ref{disc} gives a stronger extension statement in degree zero after taking the inverse limit, although higher cohomology need not almost vanish.
	
	Suppose that $X=\Spa(R,R^+)$ is an affinoid fs log adic space of finite type over $\Spa(K,K^+)$ with a strictly \'etale morphism $X \to \mathbb B^d:=\Spa(K\langle T_1,...,T_d\rangle, K^+\langle T_1,...,T_d\rangle)$, which can be written as a composite of rational embeddings and finite \'etale maps. In this subsection, we use the pure $p$-power tower of Subsection \ref{disc}. Pulling it back to $X$ gives $$\widetilde X:=\Spa(R_{\infty},R^+_{\infty})\longrightarrow X,$$ a Galois pro-finite Kummer \'etale cover with group $\Gamma:=\Z_p^d$, whose underlying space is affinoid perfectoid.
	
	Denote by $D \subset X$ the closed subset defined by $g:=T_1\cdots T_d=0$, and denote by $\widetilde{D}$ the pullback of $D$ along $\widetilde{X}\to X.$ Let $U:=X-D$ and $\widetilde{U}:=\widetilde{X}-\widetilde{D}.$ The map $\widetilde{U}\to U$ is a Galois pro-finite \'etale cover with Galois group $\Gamma$. We note that $U$ is a quasi-Stein space: if we set $V_n:=\Spa(S_n,S_n^+),$ where $$S_n:=R\left\langle \dfrac{p^n}{g}\right\rangle,$$ then $U$ is the increasing union of the $V_n$. We also denote by $\widetilde{V}_n=\Spa(S_{n,\infty},S_{n,\infty}^+)$ the pullback of $V_n$ along $\widetilde{U}\to U.$ The ``quantitative form of Scholze's Riemann extension theorem'' is stated as follows.
	
	\begin{thm}{{\cite[Theorem 4.2]{bhatt2018directsumconj}}} \label{qriemann}
		For $m\geq 1$, the map from the constant pro-system with value $R^+_{\infty}/p^m$ to the pro-system $\{S_{n,\infty}^+/p^m\}_{n\in \N}$ is almost injective with respect to $p^{1/p^{\infty}}$, and is almost surjective with respect to $(pg)^{1/p^{\infty}}$. In particular, the map $R^+_{\infty}/p^m \to H^0_{\proet}(\widetilde{U},\mathcal{O}^+_{\widetilde{U}}/p^m)$ is an almost isomorphism with respect to $(pg)^{1/p^{\infty}}$.
	\end{thm}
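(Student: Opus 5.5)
The plan is to work with an explicit almost presentation of $S_{n,\infty}^+$ and to treat injectivity and surjectivity separately. Since $\widetilde V_n$ is the rational subset $\{|p^n|\leq |g|\}$ of the affinoid perfectoid space $\widetilde X$, I will first establish, as in Subsection \ref{disc}, an almost isomorphism
$$S_{n,\infty}^+/p^m\simeq^a \Bigl(R_\infty^+\bigl[(p^n/g)^{1/p^\infty}\bigr]\Big/\bigl(g^{1/p^k}\cdot (p^n/g)^{1/p^k}-p^{n/p^k}\bigr)_{k\geq 0}\Bigr)\Big/p^m .$$
This rests on the description of rational subsets of affinoid perfectoid spaces in \cite{scholze2012perfectoid}. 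It is cleanest to prove it after tilting, where $g^\flat=T_1^\flat\cdots T_d^\flat$ has all $p$-power roots and the statement reduces to a perfect-ring computation. One then transports it back using $\mathcal O^+/p\simeq\mathcal O^{\flat+}/p^\flat$.

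\textbf{Surjectivity.} By this presentation, every element of $S^+_{n+N,\infty}/p^m$ is almost a finite sum of terms $a_j (p^{n+N}/g)^j$ with $a_j\in R^+_\infty$ and $j\in\Z[1/p]_{\geq 0}$. In $S^+_{n,\infty}$ each such term equals $a_jp^{Nj}(p^n/g)^j$. Fix $\epsilon\in\Z[1/p]_{>0}$ and multiply by $(pg)^{\epsilon}$; the argument splits according to $j$.
\begin{itemize}
\item If $j\leq\epsilon$, then $g^\epsilon(p^n/g)^j=p^{nj}g^{\epsilon-j}$ already lies in $R^+_\infty$.
\item If $j>\epsilon$, then for $N\geq m/\epsilon$ we have $p^{Nj}\in p^m\mathcal O_K$, so the term vanishes modulo $p^m$.
\end{itemize}
Thus for each $\epsilon$ there is a uniform shift $N$, independent of $n$ and of the element, such that $(pg)^\epsilon\cdot\operatorname{im}(S^+_{n+N,\infty}/p^m\to S^+_{n,\infty}/p^m)$ lies in the image of $R^+_\infty/p^m$. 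This is exactly almost surjectivity of the map of pro-systems with respect to $(pg)^{1/p^\infty}$. The extra factor $p^\epsilon$ absorbs the almost-ness of the presentation.

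\textbf{Injectivity.} The kernel of $c(R^+_\infty/p^m)\to\{S^+_{n,\infty}/p^m\}$ consists of classes $f$ that die in every $S^+_{n,\infty}/p^m$ (transition maps can only enlarge kernels). For such $f$, the function $f/p^m\in R_\infty$ satisfies $|f/p^m|\leq 1$ on every $\widetilde V_n$, hence on all of $\widetilde U$. I will then invoke Scholze's perfectoid Riemann extension theorem (the Hebbarkeitssatz for bounded functions off $g=0$ on affinoid perfectoids, equivalently the $p^{1/p^\infty}$-almost version of $R_\infty^{+}=\{h\in R_\infty: |h|\le1 \text{ on } \widetilde U\}$) to get $f/p^m\in R_\infty^{+a}$. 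This yields almost injectivity with respect to $p^{1/p^\infty}$.

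\textbf{Final statement and main obstacle.} For the ``in particular'' statement, I take the limit over $n$ using $H^0_{\proet}(\widetilde U,\mathcal O^+/p^m)=\lim_n S^+_{n,\infty}/p^m$, which holds almost since each $\widetilde V_n$ is affinoid perfectoid. The almost pro-isomorphism then gives the almost isomorphism of limits with respect to $(pg)^{1/p^\infty}$. I expect the main obstacle to be the first step, the almost presentation of $S^+_{n,\infty}$, and in particular controlling the completion and the $p$-torsion well enough that the \emph{uniform} shift $N$ in the surjectivity step survives.
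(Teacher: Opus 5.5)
The paper does not prove this statement; it quotes it from Bhatt. Your surjectivity half is sound. Only the almost generation of $S^+_{n,\infty}/p^m$ by the elements $a(p^n/g)^j$ is needed, and splitting into $j\le\epsilon$ and $j>\epsilon$ gives a shift $N\ge m/\epsilon$ that works uniformly.

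The gap is in the injectivity half. In $\text{Pro}$, the kernel of $c(R^+_\infty/p^m)\to\{S^+_{n,\infty}/p^m\}_n$ is the pro-object $\{\ker_n\}_n$, where $\ker_n:=\ker(R^+_\infty/p^m\to S^+_{n,\infty}/p^m)$ and the transition maps are inclusions. Since this chain is decreasing, it is almost zero iff for every $\epsilon$ there is a \emph{single} $N$ with $p^\epsilon\ker_N=0$. What you prove is that $\bigcap_n\ker_n=\lim_n\ker_n$ is almost zero, i.e.\ injectivity only after applying $\lim$. That is strictly weaker: compare Remark \ref{proversuslimit}, where $\{p^n\Z_p\}_n$ has zero intersection but is not pro-zero. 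No statement about functions bounded on all of $\widetilde U$, such as the Hebbarkeitssatz, supplies the needed uniformity in $N$.

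The same gap breaks your ``in particular''. For a compatible family $(x_n)$, pro-surjectivity gives $a_n$ with $a_n\mapsto (pg)^\epsilon x_n$, but only $a_n-a_{n+1}\in\ker_n$. Gluing the $a_n$ into one element needs control of $R^1\lim_n\ker_n$, which $\bigcap_n\ker_n\simeq^a0$ does not give.

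Two inputs close the gap.

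\textbf{An elementary estimate controls the pro-kernel up to powers of $g$.} Let $f\in R^+_\infty$ with $|f|\le|p|^m$ on $\widetilde V_N$. Off $\widetilde V_N$ we have $|g|<|p|^N$, and $|f|\le 1$ everywhere. Hence $|g^\epsilon f|\le|p|^m$ on all of $\widetilde X$ once $N\epsilon\ge m$, so $g^\epsilon\ker_N=0$. This has three consequences:
\begin{itemize}
\item the pro-map is almost injective for $(pg)^{1/p^\infty}$;
\item $R^1\lim_n\ker_n$ is killed by $g^{1/p^\infty}$;
\item together with your surjectivity, this proves the ``in particular''.
\end{itemize}

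\textbf{Genuine $p^{1/p^\infty}$-almost injectivity of the pro-map needs a uniform geometric input.} For instance:
\begin{itemize}
\item The Shilov boundary of the reduced affinoid $X$ is finite.
\item It avoids $\{g=0\}$, since $g$ is a non-zero-divisor, so it lies in $V_N$ for $N\gg0$.
\item The finite levels $X_k$ of the tower are finite free over $X$. Using characteristic polynomials, their sup norms are attained above this boundary.
\item By density of $\bigcup_k\mathcal O(X_k)$ in $R_\infty$, $\sup_{\widetilde V_N}|h|=\sup_{\widetilde X}|h|$ for all $h\in R_\infty$, which gives $\ker_N\simeq^a0$ for $N\gg0$.
\end{itemize}
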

	
	One might try to prove Theorem \ref{mainlog} from Theorem \ref{qriemann} using the Cartan--Leray spectral sequence. In the present setting, this requires an almost isomorphism of pro-systems
	$$\{H^i_{\cts}(\Gamma,S_{n,\infty}^+/p^m)\}_{n\in\N}\simeq^a c\bigl(H^i_{\cts}(\Gamma,R_\infty^+/p^m)\bigr),$$
	with respect to $p^{1/p^{\infty}}$. Theorem \ref{qriemann} already compares the coefficient pro-systems, but its almost surjectivity is only with respect to $(pg)^{1/p^{\infty}}$. Taking group cohomology does not automatically strengthen this almost condition. For example, consider the module $K^+/p$ over the standard coordinate algebra, with all $T_i^{1/p^n}$ acting by zero and with the trivial $\Gamma$-action. It is almost zero with respect to $(pg)^{1/p^{\infty}}$, but its $\Gamma$-invariants are $K^+/p$ itself, which is not almost zero with respect to $p^{1/p^{\infty}}$.

	In our situation, however, the weight decomposition of Subsection \ref{disc} gives this stronger comparison after taking the ordinary inverse limit, as follows.

	\begin{prop} \label{almostperf}
		For every $m\geq1$, the natural map $$R^+_{\infty}/p^m \longrightarrow H^0_{\proet}(\widetilde U,\mathcal O^+_{\widetilde U}/p^m)$$ is an almost isomorphism with respect to $p^{1/p^{\infty}}$.
	\end{prop}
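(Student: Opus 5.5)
The plan is to compute $H^0_{\proet}(\widetilde U,\mathcal O^+_{\widetilde U}/p^m)$ through the exhaustion $\widetilde U=\bigcup_n\widetilde V_n$. Then reduce the statement to the vanishing of the inverse limit of a "quotient" pro-system, which I will control with the weight decomposition of Subsection \ref{disc}. First, since $H^0$ is a sheaf condition,
$$H^0_{\proet}(\widetilde U,\mathcal O^+/p^m)=\lim_n H^0_{\proet}(\widetilde V_n,\mathcal O^+/p^m).$$
Each $\widetilde V_n$ is affinoid perfectoid, so $H^0_{\proet}(\widetilde V_n,\mathcal O^+/p^m)\simeq^a S^+_{n,\infty}/p^m$, compatibly in $n$. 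The kernels and cokernels of these comparison maps form pro-systems of almost zero modules. Products of almost zero modules are almost zero, so both $\lim$ and $R^1\lim$ of such systems are almost zero. Hence $\lim_n$ of the comparison maps is an almost isomorphism, and it suffices to show that
$$R^+_\infty/p^m\to\lim_n S^+_{n,\infty}/p^m$$
is an almost isomorphism with respect to $p^{1/p^\infty}$.

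Second, I reduce to the ball. Write $P_\infty$ and $P_{n,\infty}$ for the rings $R^\circ_\infty$ and $R^\circ_{n,\infty}$ of Subsection \ref{disc} attached to $\mathbb B^d$. Since $X\to\mathbb B^d$ is a composite of rational embeddings and finite étale maps, the usual almost purity and base change arguments (as in \cite[Lemma 4.5]{scholze2013p}) should give
$$S^+_{n,\infty}/p^m\simeq^a R^+_\infty/p^m\otimes_{P_\infty/p^m}P_{n,\infty}/p^m,$$
compatibly with transition maps. In Subsection \ref{disc} we have $P_{n,\infty}/p^m\simeq^a P_\infty/p^m\oplus\bigoplus_{\mathbf w}R_{n,\mathbf w}$ as $\mathcal O_K/p^m$-modules. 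The complement is not a $P_\infty$-submodule, since multiplication by $T_i^{1/p^k}$ mixes weights. So I would work with the quotient
$$Q_n:=(P_{n,\infty}/p^m)/(P_\infty/p^m).$$
This is a $P_\infty/p^m$-module, and as an $\mathcal O_K$-module it is almost $\bigoplus_{\mathbf w}R_{n,\mathbf w}$, with grading by weight compatible with transition maps. The injection $P_\infty/p^m\hookrightarrow P_{n,\infty}/p^m$ is (almost) split as $\mathcal O_K$-modules, hence stays (almost) injective after tensoring; this is consistent with the almost injectivity in Theorem \ref{qriemann}. Tensoring gives almost exact sequences of pro-systems
$$0\to R^+_\infty/p^m\to S^+_{n,\infty}/p^m\to Q^X_n\to0,\qquad Q^X_n:=R^+_\infty/p^m\otimes_{P_\infty/p^m}Q_n.$$

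Third, I show $\lim_n Q^X_n=0$ (up to almost zero errors). Since $R^1\lim$ of the constant system vanishes, the claim then follows from the long exact sequence for $\lim$. As an $\mathcal O_K$-module, $Q^X_n$ decomposes along weights as $\bigoplus_{\mathbf w}R^+_\infty/p^m\otimes R_{n,\mathbf w}$; I expect this compatibility of the tensor product with the weight grading to need some care. On the weight-$\mathbf w$ summand, the transition map from level $n+k$ to level $n$ is multiplication by a scalar of valuation $k\max(\mathbf w)$. This is zero once $k\geq m/\max(\mathbf w)$; unlike in Subsection \ref{disc}, I do not need uniformity in $\mathbf w$, since I only need each weight separately. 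Take a compatible family $(q_n)\in\lim_n Q^X_n$. Each weight component of $q_n$ is the image of the corresponding component of $q_{n+k}$, so it vanishes. Hence $q_n=0$ for all $n$.

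The main obstacle is the second step: justifying the base change formula for $S^+_{n,\infty}/p^m$ and checking that the weight grading survives the tensor product, rather than only holding for the ball. If the base change is problematic for the finite étale part, I would instead use the Cartan--Leray spectral sequence and run the same weight argument on the $\Gamma$-invariants, as in Subsection \ref{disc}.
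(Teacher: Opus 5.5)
Your third step has a genuine gap, and it is the crux of the proposition rather than a point needing ``some care''. The weight grading on $Q_n$ is only a grading in the graded-module sense over $P_\infty/p^m$: multiplication by $T_i^{c}$ sends weight $\mathbf w$ to $\mathbf w-ce_i$. In general $R^+_\infty/p^m$ is not a graded $P_\infty/p^m$-algebra; this already fails for a rational subset of $\mathbb B^d$ on which $T_1-1$ is inverted. So $Q^X_n=R^+_\infty/p^m\otimes_{P_\infty/p^m}Q_n$ has no weight decomposition at all. The expression $\bigoplus_{\mathbf w}R^+_\infty/p^m\otimes R_{n,\mathbf w}$ is not even defined, because the $R_{n,\mathbf w}$ are not $P_\infty$-submodules.

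Consequently an element $q_n\in Q^X_n$ has no well-defined weight support. It is the image of $q_{n+k}$, but a representation $q_{n+k}=\sum_j r_j\otimes(\text{basis vector of weight }\mathbf w_j)$ may need weights with $\max(\mathbf w_j)\to 0$ as $k$ grows, so the ``each weight dies separately'' argument cannot be run. Without a grading, the only available structure is a filtration. The span $F^{\le\delta}$ of weights with $0<\max(\mathbf w)\le\delta$ is a $P_\infty$-submodule of $Q_n$ killed by $g^\delta$, and $\{Q_n/F^{\le\delta}\}_n$ is pro-zero. Hence $\lim_n Q^X_n$ is killed by $g^\delta$ for all $\delta>0$. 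That is an almost statement with respect to $g^{1/p^\infty}$, of the same weak type as Theorem \ref{qriemann}, which the paper explicitly says is insufficient. Your Cartan--Leray fallback needs the same weight decomposition of the coefficient module, so it does not help.

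The missing idea is a way to retain a grading. The paper obtains it by localizing before decomposing. Around each point it uses a tubular neighborhood compatible with the coordinates, whose pullback to the tower is $\Spa(B,B^\circ)$ with $B=C\langle z_1^{1/p^\infty},\ldots,z_r^{1/p^\infty}\rangle$ and $C$ perfectoid. There the coefficients $C^\circ/p^m$ sit in weight zero and the tensor product is over $\mathcal O_K/p^m$, so your per-weight argument works verbatim. The global statement then follows by gluing, since $H^0$ is a sheaf (as in Subsection \ref{descent}).

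Alternatively, you could keep your global set-up by passing to a finite level $P_k$ of the tower. As a $P_k$-module, $Q_n$ splits along cosets of $p^{-k}\Z^d$. Each coset component is uniformly pro-zero, because its weights with a positive coordinate have $\max(\mathbf w)$ bounded below, and the splitting survives $R^+_k\otimes_{P_k}(-)$. You would then need a Kummer analogue of \cite[Lemma 4.5]{scholze2013p}: that $R^+_k\widehat\otimes_{P_k}P_\infty\to R^+_\infty$ is a $p^\epsilon$-isomorphism for $k\gg0$. You do not supply this input.

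Two smaller remarks. First, the base change formula you flag as the main obstacle is harmless: $\widetilde V_n$ is the fiber product of $\widetilde X$ and $\Spa(R_{n,\infty},R_{n,\infty}^{\circ})$ over $\Spa(R_\infty,R_\infty^{\circ})$ (notation of Subsection \ref{disc}), all affinoid perfectoid. Second, an $\mathcal O_K$-linear splitting is not preserved by $\otimes_{P_\infty/p^m}$, so your argument for almost injectivity after tensoring is invalid; use Theorem \ref{qriemann} for that instead.
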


	\begin{proof}
		We only give a sketch, as the proof is similar to the proof in Section \ref{proofmain} with a different settings. We start from a tubular neighborhood compatible with the given coordinates. Choose a topologically nilpotent $\varpi\in K$ with compatible $p$-power roots. After shrinking the neighborhood and rescaling the coordinates, we may moreover assume that its pullback to the tower is $\Spa(B,B^\circ)$, where $$B=C\langle z_1^{1/p^\infty},\ldots,z_r^{1/p^\infty}\rangle$$ for a perfectoid $K$-algebra $C$.

		Let $W$ be the complement of $z_1\cdots z_r=0$ in $\Spa(B,B^\circ)$, and set $W_n:=\{|z_1\cdots z_r|\geq|\varpi|^n\}$, so that $W=\bigcup_n W_n$. We use the decomposition of Subsection \ref{disc}, with $C^\circ/p^m$ in place of $\mathcal O_K/p^m$. On the summand indexed by $\mathbf w$, the transition map from $N$ to $n$ is multiplication by $\varpi^{(N-n)\max(\mathbf w)}$. For each fixed $\mathbf w$, this map is zero modulo $p^m$ for sufficiently large $N$. Thus these summands have zero inverse limit, and only $B^\circ/p^m$ remains. Since each $W_n$ is affinoid perfectoid, we obtain
		$$B^\circ/p^m\simeq^a\lim_n\mathcal O^+(W_n)/p^m\simeq^a H^0_{\proet}(W,\mathcal O^+/p^m).$$ This prove the proposition for a tubular neighborhood, and the general case follows by descent\footnote{We don't have pro-isomorphism, but it is not a problem as we only work with global sections.} as in Subsection \ref{descent}.
	\end{proof}
	
	Here the required transition step may depend on the weight. This suffices for the ordinary inverse limit, but does not show that the additional weight pro-system is almost zero. In particular, Proposition \ref{almostperf} does not give a comparison of derived inverse limits: the $R^1\lim$ term need not almost vanish, as Proposition \ref{failperfectoid} shows. Thus computing the cohomology of $U$ by Cartan--Leray would not by itself prove the pro-system statements of Theorems \ref{main} and \ref{mainlog}. On the other hand, we still have
	$$H_{\proet}^{i}(\widetilde{U}, \mathcal{O}_{\widetilde{U}}^{+}/p^m) \simeq^a
	\begin{cases} \lim_n(S_{n,\infty}^{+}/p^m) & \text{if } i=0 \\ R^1 \lim_n(S_{n,\infty}^{+}/p^m) & \text{if } i=1 \\ 0 & \text{if } i \ge 2.
	\end{cases}$$
	Proposition \ref{failperfectoid} below shows that the degree-one term need not be almost zero with respect to $p^{1/p^{\infty}}$. To apply the Cartan--Leray spectral sequence without choosing a topology on this higher cohomology group, we use the condensed formulation of \cite[Proposition 4.12]{bosco2023padicproetalecohomologydrinfeld}:
    $$E_{2}^{i,j} := H_{\cond}^{i}(\Gamma, H_{\proet}^{j}(\widetilde{U}, \underline{\mathcal{O}^+_{\widetilde{U}}}/p^m)) \Rightarrow H_{\proet}^{i+j}(U, \underline{\mathcal{O}^+_{U}}/p^m).$$
    This gives an almost exact sequence
    $$\dots   \to E_2^{i,0} \to H_{\proet}^{i}(U, \underline{\mathcal{O}^+_{U}}/p^m) \to E_2^{i-1,1} \to E_2^{i+1,0} \to H_{\proet}^{i+1}(U, \underline{\mathcal{O}^+_{U}}/p^m) \to \dots$$
    According to \cite[Remark 2.12 and Proposition B.2]{bosco2023padicproetalecohomologydrinfeld}, we have $$H_{\proet}^{i}(U, \underline{\mathcal{O}^+_{U}}/p^m)(*)=H_{\proet}^{i}(U, {\mathcal{O}^+_{U}}/p^m), \quad E_2^{i,0}(*)=H_{\cts}^{i}(\Gamma, H_{\proet}^{0}(\widetilde{U}, \mathcal{O}^+_{\widetilde{U}}/p^m))$$ for all $i\geq 0$, where $M(*)$ denotes the underlying abelian group of a condensed abelian group $M$, and the degree-zero sections carry the inverse-limit topology. Proposition \ref{almostperf} easily extends to the condensed setting, giving the following corollary.
	
	\begin{cor}
		For every $i\geq0$ and $m\geq1$, the natural map $$H^i_{\cts}(\Gamma,R_\infty^+/p^m)\longrightarrow H^i_{\cts}(\Gamma,H^0_{\proet}(\widetilde U,\mathcal O^+_{\widetilde U}/p^m))$$ is an almost isomorphism with respect to $p^{1/p^{\infty}}$. Here the source coefficient module is discrete, and the target coefficient module has the inverse-limit topology. In particular, we have for every $i\geq0$ and $m\geq1$, $$E_2^{i,1} \simeq^a 0.$$
	\end{cor}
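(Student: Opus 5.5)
The strategy is to deduce the corollary from Proposition \ref{almostperf}, upgraded to the condensed setting. The degree-zero coefficient module $M:=H^0_{\proet}(\widetilde U,\mathcal O^+_{\widetilde U}/p^m)=\lim_n S^+_{n,\infty}/p^m$ carries the inverse-limit topology, with each $S^+_{n,\infty}/p^m$ discrete. I would then compute $H^i_{\cts}(\Gamma,M)$ through the inverse system of discrete $\Gamma$-modules $\{S^+_{n,\infty}/p^m\}_n$.

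\textbf{Step 1: continuous cohomology of the limit.} Since $\Gamma=\Z_p^d$, continuous cohomology with coefficients in $\lim_n$ of discrete modules equals $R\lim_n$ of the continuous cohomology of the terms. Continuous cochains into a limit form the limit of the continuous cochains, and the transition maps on cochain modules are surjective enough for a Milnor-type sequence:
$$0\to R^1\lim_n H^{i-1}_{\cts}(\Gamma,S^+_{n,\infty}/p^m)\to H^i_{\cts}(\Gamma,M)\to \lim_n H^i_{\cts}(\Gamma,S^+_{n,\infty}/p^m)\to 0.$$
Alternatively, since $\Gamma$ is topologically generated by $d$ commuting elements, I would compute everything with the Koszul complex of $\gamma_1-1,\dots,\gamma_d-1$. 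The Koszul complex commutes with $\lim_n$ termwise, so $\rg_{\cts}(\Gamma,M)\simeq R\lim_n \rg_{\cts}(\Gamma,S^+_{n,\infty}/p^m)$ once the derived limit of the coefficient system is concentrated in degree $0$ up to the relevant $R^1\lim$.

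\textbf{Step 2: weight decomposition.} As in the proof of Proposition \ref{almostperf}, I would reduce to a tubular neighborhood and decompose $S^+_{n,\infty}/p^m$ almost as $R^+_\infty/p^m$ plus the weight summands $R_{n,\mathbf w}$. For the constant part, $H^i_{\cts}(\Gamma,R^+_\infty/p^m)$ appears on the nose. For each fixed $\mathbf w$, the claim of Subsection \ref{disc} applies: the transition map $R_{n+N,\mathbf w}\to R_{n,\mathbf w}$ induces zero on $H^i_{\cts}$, with $N=\max\{m,2\}$. Here the scalar of valuation $Na$ is divisible by $\zeta-1$, or is already zero mod $p^m$ for $a\ge1$. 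In the perfectoid-field setting, $\varpi$ plays the role of $p$, and the paper notes that $N$ may depend on $\mathbf w$. But the Koszul annihilation argument for $0<a<1$ uses only $v(\zeta-1)\le 2a$ and gives a uniform $N$ with respect to $\varpi$-valuations after rescaling. So the additional-weight pro-system of cohomology groups is pro-zero, hence has vanishing $\lim$ and $R^1\lim$. Here I must be careful: the direct sum over $\mathbf w$ inside a limit is really a completed sum. The pro-zero statement for the whole pro-system is what is needed, and it holds exactly when $N$ is uniform.

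\textbf{Step 3: descent and the consequence for $E_2^{i,1}$.} I would glue the local statement to the general case by the \v Cech/hypercover argument of Subsection \ref{descent}, together with Lemma \ref{prospectral} applied to the pro-systems of Koszul cohomology. This gives the first claim. For $E_2^{i,1}$, I would observe that $H^1_{\proet}(\widetilde U,\underline{\mathcal O^+}/p^m)\simeq^a R^1\lim_n S^+_{n,\infty}/p^m$ as condensed modules. By the weight decomposition, this equals $R^1\lim$ of the non-constant weight part, since $R^1\lim$ of the constant part is zero.

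\textbf{Main obstacle.} I expect the hardest step to be the vanishing of condensed group cohomology of this $R^1\lim$ term. This is a non-Hausdorff, purely condensed object. To handle it, I would write $R^1\lim$ as the cokernel of $\prod_n\to\prod_n$ in condensed modules and compute $\rg_{\cond}(\Gamma,-)$ via the Koszul complex, which commutes with products and cokernels as an exact functor on complexes. This yields $R\Gamma_{\cond}(\Gamma, R\lim_n(\text{weight part}))\simeq R\lim_n \rg(\Gamma,\text{weight part}_n)$. The last term is almost zero by the pro-zero statement of Step 2, so $E_2^{i,1}\simeq^a0$.
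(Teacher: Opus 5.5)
Your Step 1 is where the argument breaks. Write $M_n:=S^+_{n,\infty}/p^m$ and $M=\lim_n M_n$. It is true that $C^\bullet_{\cts}(\Gamma,M)=\lim_n C^\bullet_{\cts}(\Gamma,M_n)$. But replacing $\lim_n$ by $R\lim_n$ requires $R^1\lim_n$ of the cochain modules to vanish, and already in cochain degree $0$ this is $R^1\lim_n M_n$, which need not be almost zero (Proposition \ref{failperfectoid}). The reason is that on the weight summands the transition maps are multiplication by $\varpi^{(N-n)\max(\mathbf w)}$, with no $N$ uniform in $\mathbf w$.

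More precisely, $R\lim_n\rg_{\cts}(\Gamma,M_n)\simeq^a R\lim_n\rg_{\et}(V_n,\mathcal O^+/p^m)\simeq\rg_{\proet}(U,\mathcal O^+/p^m)$ is the \emph{abutment} of the Cartan--Leray spectral sequence. By contrast, $H^i_{\cts}(\Gamma,M)$ is its bottom row $E_2^{i,0}$. Since $R\lim_n\rg(\Gamma,M_n)\simeq\rg(\Gamma,R\lim_n M_n)$, the two are related by a triangle
\[\rg(\Gamma,M)\to R\lim_n\rg(\Gamma,M_n)\to\rg\bigl(\Gamma,R^1\lim_n M_n\bigr)[-1],\]
and $R^1\lim_n M_n\simeq^a H^1_{\proet}(\widetilde U,\mathcal O^+/p^m)$, so the third term is exactly the row $E_2^{\bullet,1}$. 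Your Milnor sequence for $H^i_{\cts}(\Gamma,M)$ is therefore equivalent to the second assertion of the corollary, and using it to prove the first assertion is circular. Steps 2--3 only recompute, for the abutment, what Theorem \ref{mainlog} already gives: up to almost isomorphism, $\{H^i_{\cts}(\Gamma,M_n)\}_n$ is $\{H^i_{\et}(V_n,\mathcal O^+/p^m)\}_n$. They say nothing about $E_2^{i,0}$.

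The paper never interchanges $\lim_n$ with group cohomology; for it the first claim is a statement about coefficients. Proposition \ref{almostperf}, upgraded to condensed modules, shows that $R^+_\infty/p^m\to M$ is an almost isomorphism compatible with the inverse-limit topology. This works because the ordinary limit of the weight summands vanishes, so only $R^+_\infty/p^m$ survives. The claim on $H^i_{\cts}$ then follows at once.

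The vanishing $E_2^{i,1}\simeq^a 0$ is then read off from the two-row long exact sequence $\cdots\to E_2^{i,0}\to H^i_{\proet}(U,\mathcal O^+/p^m)\to E_2^{i-1,1}\to E_2^{i+1,0}\to\cdots$. The edge maps are almost isomorphisms, because $H^i_{\cts}(\Gamma,R^+_\infty/p^m)\simeq^a H^i_{\ket}(X,\mathcal O^+/p^m)\simeq^a H^i_{\proet}(U,\mathcal O^+/p^m)$ by Theorem \ref{mainlog} and Remark \ref{remequiv}.

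Your ``main obstacle'' computation is a legitimate alternative for the $E_2^{i,1}$ statement in the local model. There the weight part $W_n$ has $\lim_n W_n=0$, so $\rg(\Gamma,R^1\lim_n W_n)[-1]\simeq R\lim_n\rg(\Gamma,W_n)\simeq^a 0$ by the uniform pro-vanishing of the cohomology. Done \emph{first}, it would justify Step 1 locally. However, you carry it out only on a tubular neighborhood and give no way to globalize a statement about $H^1_{\proet}(\widetilde U,-)$. In the paper's order, the global vanishing comes for free from the degree-zero statement and Theorem \ref{mainlog}.
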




	Thus the degree-zero sections satisfy the stronger extension statement of Proposition \ref{almostperf}, but the higher cohomology still gives an obstruction to extending Theorem \ref{main} to perfectoid spaces. We finish with the promised example.
	
	\begin{prop} \label{failperfectoid}
		Consider the map $$\widetilde{X}:=\Spa(K\langle T^{1/p^{\infty}}\rangle, \mathcal O_K\langle T^{1/p^{\infty}}\rangle) \to X:=\Spa(K\langle T\rangle, \mathcal O_K\langle T\rangle).$$ For $n\geq 0$, let $U_n:=\Spa(K\langle T,p^n/T\rangle, \mathcal O_K\langle T,p^n/T\rangle)$ and $\widetilde{U}_n$ be the pullback of $U_n$ along $\widetilde{X} \to X$. Then (in the almost sense) $$R^1\lim_n H^0_{\et}(\widetilde{U}_n, \mathcal{O}^+/p)\neq 0.$$ 
	\end{prop}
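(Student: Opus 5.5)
The plan is to compute everything explicitly through the weight decomposition of Subsection \ref{disc}, now with $d=1$ and $m=1$, and then exhibit an explicit class in $R^1\lim$ that is not killed by any $t\in\mathfrak m$ with $v(t)<1$.

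\textbf{Step 1: reduce to an explicit inverse system.} Since $\widetilde U_n$ is affinoid perfectoid, $H^0_{\et}(\widetilde U_n,\mathcal O^+/p)\simeq^a R^\circ_{n,\infty}/p$, compatibly in $n$. A levelwise almost isomorphism of inverse systems induces an almost isomorphism on $R^1\lim$, because $R^1\lim$ is the cokernel of $D$ on $\prod_n$, and products preserve almost isomorphisms. By the decomposition in Subsection \ref{disc},
$$R^\circ_{n,\infty}/p\simeq^a R^\circ_\infty/p\oplus\bigoplus_{w}\mathcal O_K/p\cdot e_{n,w},\qquad e_{n,w}:=\frac{p^{nw}}{T^{w}},$$
where $w$ ranges over $\Z[1/p]_{>0}$. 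The transition map sends $e_{n+1,w}\mapsto p^{w}e_{n,w}$ and is the identity on the constant summand. The constant summand has $R^1\lim=0$, and weights $w\geq1$ give zero transition maps. So it suffices to show that $R^1\lim_n M_n$ is not almost zero, where
$$M_n:=\bigoplus_{w\in(0,1)\cap\Z[1/p]}\mathcal O_K/p\cdot e_{n,w}.$$

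\textbf{Step 2: the obstruction class.} Set $w_k:=p^{-k}$ for $k\geq1$, and consider $y=(y_n)_n\in\prod_n M_n$ with $y_n:=e_{n,w_n}$. Fix $t\in\mathfrak m$ with $0<v(t)<1$. Suppose $(x_n)\in\prod_n M_n$ satisfies $x_n-f_n(x_{n+1})=ty_n$ for all $n$. We compare this equation componentwise at the weight $w_k$.

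For $n\neq k$ it reads $x_{n,w_k}=p^{w_k}x_{n+1,w_k}$, and for $n=k$ it reads $x_{k,w_k}=t+p^{w_k}x_{k+1,w_k}$. Iterating the $n>k$ relations $N$ times with $Nw_k\geq1$ gives $p^{w_k}x_{k+1,w_k}=p^{Nw_k}x_{k+N,w_k}=0$ in $\mathcal O_K/p$. Hence $x_{k,w_k}=t$. Going down, the relations for $n<k$ then give
$$x_{0,w_k}=p^{kw_k}\,t=p^{k/p^k}\,t.$$
Since $k/p^k\to0$ and $v(t)<1$, the element $x_{0,w_k}$ is nonzero in $\mathcal O_K/p$ for all $k\gg0$.

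This contradicts the fact that $x_0$ lies in a direct sum and therefore has only finitely many nonzero components. So $t\cdot[y]\neq0$ in $R^1\lim_n M_n$, and since $t$ was an arbitrary element of $\mathfrak m$ with $v(t)<1$, the module $R^1\lim_n M_n$ is not almost zero.

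\textbf{Main obstacle.} The only delicate point is the bookkeeping in Step 1: the decomposition must be compatible with the transition maps, and the reduction to $M_n$ must preserve the relevant almost statement. This holds because almost isomorphisms commute with products and cokernels, and because the constant summand and the weights $w\geq1$ contribute nothing to $R^1\lim$. The computation itself is forced: in contrast to Subsection \ref{disc}, where the $\Gamma$-action annihilated the small weights after passing to group cohomology, here there is no group cohomology. The transition steps needed to kill weight $w_k$ therefore grow unboundedly in $k$, and this non-uniformity is exactly what the class $y$ detects.
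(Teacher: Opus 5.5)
Your proposal is correct and follows essentially the paper's argument: your class $y_n=e_{n,p^{-n}}$ is exactly the paper's $f_n=(p^n/T)^{1/p^n}$, and the contradiction is the same, namely that the degree-zero term would need infinitely many nonzero weight components. Your version is more careful than the paper's on the almost aspect. You pass to an honest inverse system via a levelwise almost isomorphism, and you show $t\cdot[y]\neq 0$ for every $t$ with $v(t)<1$, using the upward iteration $p^{w_k}x_{k+1,w_k}=p^{Nw_k}x_{k+N,w_k}=0$, whereas the paper only shows that the class itself is nonzero (set $y_0:=0$, since $w_0=1$ is not a weight of $M_0$).
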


	\begin{proof}
		As $\widetilde{U}_n$ is an affinoid perfectoid space over $K$ for $n \geq 0$, we have $$H^0_{\et}(\widetilde{U}_n, \mathcal{O}^+/p)\simeq^a\mathcal{O}_K/p[T^{1/p^{\infty}},(p^n/T)^{1/p^{\infty}}].$$
		Set $f_n(T) := (p^n/T)^{1/p^{n}} \in H^0_{\et}(\widetilde{U}_n, \mathcal{O}^+/p)$. It suffices to prove that there does not exist a sequence of functions $g_n \in H^0_{\et}(\widetilde{U}_n, \mathcal{O}^+/p)$ satisfying $g_n - g_{n+1} = f_n$.
		
		Suppose the opposite. We write $g_0=\sum_{w}a_wT^w$, where $w \in \Z[1/p]$, $a_w \in \mathcal{O}_K/p$, and only finitely many $a_w$ are nonzero. Then there exists $N \in \N$ such that $a_{-1/p^n}=0$ for any $n\geq N.$ However, as $$g_0=\sum_{n=0}^{N} f_n+g_{N+1}=\sum_{n=0}^{N} (p^n/T)^{1/p^{n}}+g_{N+1},$$ and $(p^{N}/T)^{1/p^{N}}\notin H^0_{\et}(\widetilde{U}_{N+1}, \mathcal{O}^+/p),$ we have $a_{-1/p^N} \neq 0$, which leads to a contradiction. This completes the proof.
	\end{proof}
	
	\begin{rem}
		On the other hand, Theorem \ref{qriemann} implies that $R^1\lim_n H^0_{\et}(\widetilde{U}_n, \mathcal{O}^+/p)$ is almost zero with respect to $(pT)^{1/p^{\infty}}.$
	\end{rem}
	
	\subsection{Applications to pro-\'etale \texorpdfstring{$G$}{G}-torsors} \label{torsors}
	
	In this subsection, we apply Theorems \ref{main} and \ref{mainlog} to study pro-\'etale (or equivalently, $v$) $G$-torsors over open rigid spaces. We will also discuss their relation to local correspondences.
	
	We refer the reader to \cite[Section 3]{heuer2026gtorsor} for the notation concerning rigid groups and their associated Lie algebras. In particular, rigid analytic groups are also regarded as sheaves on the relevant $v$-sites. More explicitly, whenever no confusion can arise, we freely identify a rigid group $G$ over $K$ with its associated pro-\'etale sheaf or pro-Kummer \'etale sheaf over $K$, depending on the context. Let $X$ be a smooth adic space (resp. an fs log adic space that is log smooth) over $\Spa(K,K^+)$. Let $\tau \in\{\proet,\proket\}$.
	
	Assume in this subsection that $G$ has good reduction. Thus we fix a smooth formal model $\mathcal G$ of $G$ over $\operatorname{Spf}(K^+)$. Let $\mathfrak g^+$ be the Lie algebra of $\mathcal G$. It is a finite projective, hence free, $K^+$-module, and we regard it as a canonical open $K^+$-lattice $\mathfrak g^+ \subset \mathfrak g:=\mathfrak g^+ \otimes_{K^+} K$. We recall some notation and properties that will be used in this subsection.
	
	\begin{lem}\cite[Lemma 4.13]{heuer2026gtorsor}
		The sheaf $G$ on $X_{\tau}$ is the analytic sheafification of the functor
		$$ (R,R^+) \longmapsto \mathcal G(R^+). $$
	\end{lem}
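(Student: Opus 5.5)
The approach is to reduce the sheafification statement to a comparison of two presheaves on a basis of $X_\tau$, and then to identify both with the rigid group $G$ viewed as a $v$-sheaf. Concretely, $X_\tau$ has a basis of (log) affinoid perfectoid objects $V=\Spa(R,R^+)$. On such objects we already know two things. First, $V\mapsto G(R,R^+)=\operatorname{Hom}_K(\Spa(R,R^+),G)$ is the restriction of the $v$-sheaf $G$. Second, $\mathcal G(R^+)=\operatorname{Hom}_{\operatorname{Spf}K^+}(\operatorname{Spf}R^+,\mathcal G)$ maps to $G(R,R^+)$ by passing to the adic generic fibre. It therefore suffices to show that this natural map $\mathcal G(R^+)\to G(R,R^+)$ becomes an isomorphism after analytic sheafification. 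Explicitly, every section of $G$ over $V$ should lift to $\mathcal G(R_j^+)$ on some rational cover $\{V_j\}$, and two elements of $\mathcal G(R^+)$ with the same image in $G(R)$ should agree.

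Injectivity should be the easy half. $\mathcal G$ is separated and formally smooth over $\operatorname{Spf}K^+$, and $R^+\subset R$ is $p$-torsion free and $p$-adically separated. Hence a morphism $\operatorname{Spf}R^+\to\mathcal G$ is determined by its generic fibre: locally on an affine open $\operatorname{Spf}B\subset\mathcal G$ it is a ring map $B\to R^+\subset R$. I would check that the open cover of $\mathcal G$ pulls back to an open cover of $\operatorname{Spf}R^+$, which can be refined analytically to a rational cover of $V$. From this I conclude that the map is injective after sheafification, since $R^+\hookrightarrow R$ is injective.

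Surjectivity is the main step. Let $x\colon V\to G$ be a morphism. Because $\mathcal G$ is a formal model of $G$ (good reduction), we have $G=\mathcal G_\eta$, and $G$ is covered by the generic fibres $\mathcal U_\eta$ of affine opens $\mathcal U=\operatorname{Spf}B\subset\mathcal G$. The preimages $x^{-1}(\mathcal U_\eta)$ are opens of $V$. Since $V$ is quasi-compact, I would refine them by a finite rational cover $\{V_j\}$. On each $V_j$ we then get a ring map $B\to R_j$. Since $B$ is topologically of finite type and $\mathcal U_\eta=\Spa(B[1/p],B^{\circ})$, with $B$ integrally closed in $B[1/p]$ by smoothness, the images of the generators are power-bounded, hence lie in $R_j^+$. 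This yields $B\to R_j^+$, i.e.\ an element of $\mathcal G(R_j^+)$. The subtle point is that the points of $G$ are adic points of $\mathcal G_\eta$, which lie over the closed fibre via specialization. The cover $\{\mathcal U_\eta\}$ is an open cover of $G$ precisely because specialization $\operatorname{sp}\colon\mathcal G_\eta\to\mathcal G$ is continuous, and $\operatorname{sp}^{-1}(\mathcal U)=\mathcal U_\eta$ — this is where the Raynaud generic-fibre formalism enters.

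Finally, I would assemble these local lifts by the already-established injectivity, which makes them agree on overlaps. It remains to observe that analytic sheafification on the basis of affinoid perfectoids determines the sheaf on all of $X_\tau$, which gives the claim. I expect the only genuinely delicate point to be the integrality step $B\to R_j^+$, namely that $R^+$ equals the power-bounded elements in the relevant sense (true for affinoid perfectoid $(R,R^+)$ when $K^+=\mathcal O_K$, otherwise using that $R^+$ is integrally closed and open). For general $K^+$ I would first reduce to $K^+=\mathcal O_K$ as elsewhere in the paper.
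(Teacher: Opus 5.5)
The paper gives no proof of this lemma; it is quoted from \cite[Lemma 4.13]{heuer2026gtorsor}. Your overall structure is the natural one and is sound:
\begin{itemize}
\item compare $\mathcal G(R^+)\to G(R,R^+)$ on affinoid perfectoids;
\item get injectivity from separatedness of $\mathcal G$ and $R^+\subset R$;
\item get local surjectivity by pulling back the cover of $G$ by the generic fibres $\mathcal U_\eta$ of affine opens $\mathcal U=\operatorname{Spf}B\subset\mathcal G$, and refining to a finite rational cover.
\end{itemize}
The problem is the step you single out as the only delicate one: your justification of it is incorrect.

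You show that the images of generators of $B$ are power-bounded in $R_j$, and then assert $R_j^+=R_j^\circ$. That assertion is false in the generality you need.
\begin{itemize}
\item It fails as soon as $K^+\neq\mathcal O_K$, already for $(R,R^+)=(K,K^+)$.
\item It also fails for general affinoid perfectoid pairs over $(K,\mathcal O_K)$. Take $(C,C^+)$ with $C$ a complete algebraically closed extension of $K$ and $C^+\subsetneq\mathcal O_C$ a rank-two valuation ring containing $\mathcal O_K$; such pairs occur in the $v$-site.
\item Your fallback is also wrong: an open, integrally closed $R^+$ need not contain $R^\circ$.
\item You cannot reduce to $K^+=\mathcal O_K$. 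The reduction mentioned in the paper is for almost statements about $\mathcal O^+/p^m$-cohomology, using $\mathfrak m\subset K^+$. This lemma is an honest identification of sheaves, so that reduction does not apply.
\item For $K^+\neq\mathcal O_K$, your description of the generic fibre is also wrong. The generic fibre of $\operatorname{Spf}B$ over $\Spa(K,K^+)$ is $\Spa(B[1/p],B^+)$, with $B^+$ the integral closure of $B$ in $B[1/p]$, not $\Spa(B[1/p],B[1/p]^\circ)$.
\end{itemize}

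The repair is that this step is purely formal. Since $\mathcal U_\eta=\Spa(B[1/p],B^+)$ is affinoid, the restriction $x|_{V_j}\colon \Spa(R_j,R_j^+)\to\mathcal U_\eta$ is the same as a morphism of Huber pairs $(B[1/p],B^+)\to(R_j,R_j^+)$. By the definition of such morphisms, $B\subset B^+$ is sent into $R_j^+$. This gives a $p$-adically continuous $K^+$-algebra map $B\to R_j^+$, i.e.\ an element of $\mathcal G(R_j^+)$ lifting $x|_{V_j}$. The argument is uniform in $K^+$ and in the affinoid perfectoid pair $(R_j,R_j^+)$. It uses neither power-boundedness, nor integral closedness of $B$, nor smoothness of $\mathcal G$; smoothness plays no role in this step. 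With this replacement, the rest of your outline goes through.
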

	
	This gives an integral approximation of $G$ by reduction modulo the ideals $p^k\mathfrak m$.
	
	\begin{defn}\cite[Definition 4.14]{heuer2026gtorsor} \label{smallsubgroups}
		For $0\leq k\in v(K^\times)$, let $\overline G_k$ be the sheaf on $X_{\tau}$ obtained by \'etale sheafification of the presheaf
		$$ Y \longmapsto \mathcal G\bigl(\mathcal O^+(Y)/p^k\mathfrak m\,\mathcal O^+(Y)\bigr). $$
		We denote by $G_k\subseteq G$ the kernel of the natural morphism $G\to \overline G_k$. Equivalently, one has an exact sequence of sheaves
		$$ 1\longrightarrow G_k\longrightarrow G\longrightarrow \overline G_k\to 1. $$
		After passing to sheaves on $X_{\tau}$, we also write $\overline G_k:=G/G_k$.
	\end{defn}
	
	The subgroups $G_k$ form a system of small integral neighbourhoods of the identity. On the Lie algebra side we use the following parallel notation.
	
	\begin{defn}\cite[Definition 4.20]{heuer2026gtorsor}
		For $0\leq k\in v(K^\times)$, define
		$ \mathfrak g^+_k:=p^k\mathfrak m\,\mathfrak g^+\subseteq \mathfrak g. $
		This is represented by an open rigid subgroup of the additive rigid group $\mathfrak g$. We also set
		$ \overline{\mathfrak g}^{\, +}_k:=\mathfrak g^+/\mathfrak g^+_k $
		as a sheaf on $X_{\tau}$. If $d=\dim G$, then, since $\mathfrak g^+$ is free over $K^+$, there is a non-canonical isomorphism of $\mathcal O^+$-modules
		$ \overline{\mathfrak g}^{\, +}_k \simeq \bigl(\mathcal O^+/p^k\mathfrak m\,\mathcal O^+\bigr)^d. $
	\end{defn}
	
	For sufficiently small neighbourhoods of the origin, the exponential identifies the Lie-theoretic and group-theoretic filtrations.
	
	\begin{lem}\cite[Lemma 4.21]{heuer2026gtorsor}
		There is a constant $\alpha\geq 1/(p-1)$ such that for every $k\in v(K^\times)$ with $k>\alpha$, the exponential of $G$ is defined on $\mathfrak g^+_k$ and induces an isomorphism of rigid spaces $$ \exp:\mathfrak g^+_k \xrightarrow{\sim} G_k. $$ We denote its inverse by $\log$ whenever $k>\alpha$.
	\end{lem}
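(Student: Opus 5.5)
The statement is \cite[Lemma 4.21]{heuer2026gtorsor}, so the cleanest route is to cite it. If I had to reprove it, the plan is a standard formal-group argument carried out on the smooth formal model $\mathcal G$.

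First, work formally near the identity. Since $\mathcal G$ is smooth over $\operatorname{Spf}(K^+)$ of relative dimension $d$, the completion of $\mathcal G$ along the identity section is a formal group law $F$ in $d$ variables with coefficients in $K^+$. Choosing a basis of $\mathfrak g^+$ identifies the tangent space at the identity with the coordinate lattice. In these coordinates the subgroup $G_k$, the kernel of reduction modulo $p^k\mathfrak m$, is the sheaf of points with all coordinates in $p^k\mathfrak m\,\mathcal O^+$. This gives an isomorphism of rigid spaces between $G_k$ and the open polydisc $\mathfrak g^+_k$, but with the group law $F$ rather than addition. The one point to verify here is that $G_k$ lies in the formal neighbourhood of the identity. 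This holds because a point reducing to the identity modulo $p^k\mathfrak m$ with $k>0$ lies in the residue disc of the identity section, so it is described by these coordinates. One should check this on affinoid perfectoid test objects, where the sections are $\mathcal G(R^+)$ by \cite[Lemma 4.13]{heuer2026gtorsor}.

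Second, define $\exp$ and $\log$ on polydiscs via the formal logarithm. Over $K$ the group law $F$ has a formal logarithm $\log_F(X)=X+\sum_{|\nu|\ge 2}c_\nu X^\nu$, which is a $K$-power series with $\log_F\circ F=\log_F(X)+\log_F(Y)$. Its inverse $\exp_F$ makes the exponential of $G$ agree with $\exp_F$ on $\mathfrak g$ near $0$. The key estimate is a denominator bound: $|\nu|\,c_\nu\in K^+$ up to a bounded factor, and the inverse series has coefficients $e_\nu$ with $v(e_\nu)\ge -(|\nu|-1)\beta$ for some constant $\beta$. When the model is defined over $\Z_p$ one can take $\beta=1/(p-1)$, and in general one takes $\beta\ge 1/(p-1)$ depending on $\mathcal G$. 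Proving this coefficient estimate is the main obstacle. For $\log_F$ I would use the invariant differentials, which are integral. For $\exp_F$ I would use Lagrange inversion together with the bound $v(n!)\le (n-1)/(p-1)$.

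Third, conclude. Set $\alpha:=\beta$, or slightly larger as needed. For $k>\alpha$ and $x$ with $v(x)>k$, each term $e_\nu x^\nu$ has valuation at least $-(|\nu|-1)\alpha+|\nu|k>k+(|\nu|-1)(k-\alpha)$. So $\exp_F$ converges and maps the polydisc $\mathfrak g^+_k$ into itself, and hence into $G_k$. The same estimate applies to $\log_F$ on $G_k$. The identities $\exp_F\circ\log_F=\mathrm{id}$ and $\log_F\circ\exp_F=\mathrm{id}$ hold as formal identities, so they remain valid wherever both series converge. This gives mutually inverse isomorphisms of rigid spaces $\mathfrak g^+_k\cong G_k$. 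These maps are compatible with the group structures through $\log_F\circ F=\log_F+\log_F$. Finally, the statement on the $\tau$-site follows by sheafification.
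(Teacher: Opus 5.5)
Citing \cite[Lemma 4.21]{heuer2026gtorsor} is exactly what the paper does; it gives no argument of its own. Your fallback reproof, however, has a genuine gap: it only works for commutative $G$. In your second step you use a formal logarithm with $\log_F\circ F=\log_F(X)+\log_F(Y)$. Over $K$ this is a strict isomorphism from $F$ to the additive formal group $\widehat{\mathbb G}_a^{\,d}$, and such a series exists only if $F$ is commutative. Here $G$ is an arbitrary rigid group with good reduction, and the case the paper has in mind is $\mathrm{GL}_r$. For noncommutative $G$ the invariant differentials are not closed (Maurer--Cartan), so they cannot be integrated to give $\log_F$. Your coefficient estimate then has nothing to apply to. Your closing claim that $\exp$ and $\log$ respect the group laws is also false, since only the Baker--Campbell--Hausdorff formula holds. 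The lemma asserts only an isomorphism of rigid spaces, so no homomorphism property is needed.

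The fix is to estimate $\exp$ directly rather than as the inverse of $\log_F$. For $X\in\mathfrak g^+$, let $D_X$ be the left-invariant derivation of $K^+[[x_1,\dots,x_d]]$ whose value at the identity is $X$. Its coefficients are partial derivatives of $F(x,y)$ in $y$ at $y=0$, so they lie in $K^+$. Since $t\mapsto \exp(tX)$ is the integral curve of $D_X$ through the identity, we have $x_i(\exp X)=\sum_{n\geq 0}(D_X^n x_i)(0)/n!$. Here $(D_X^n x_i)(0)$ is a homogeneous polynomial of degree $n$ in the coordinates of $X$ with coefficients in $K^+$. Hence the coefficients $e_\nu$ of $\exp$ satisfy $v(e_\nu)\geq -v(|\nu|!)\geq -(|\nu|-1)/(p-1)$. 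To invert, set $c:=\zeta_p-1$, so that $v(c)=1/(p-1)$. Then $Y\mapsto c^{-1}\exp(cY)$ is given by power series with coefficients in $K^+$ and identity linear term. Its compositional inverse is therefore also integral, and both maps preserve every polydisc $(p^s\mathfrak m\,\mathcal O^+)^d$ with $s\geq 0$. Scaling back shows that $\exp\colon\mathfrak g^+_k\to G_k$ is an isomorphism for every $k\geq 1/(p-1)$, with the inverse series as $\log$. This replaces your Lagrange-inversion step and uses no commutativity.
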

	
	We write $\operatorname{Tors}_{G_k}$ for the category of $G_k$-torsors and $H^1(-,G_k)$ for its pointed set of isomorphism classes. Theorems \ref{main} and \ref{mainlog}, together with Propositions \ref{limitcoh} and \ref{limitcohlog} and the successive additive quotients of the small subgroups \cite[Lemma 4.24]{heuer2026gtorsor}, give the following result.
	
	\begin{thm} \label{gtorsor}
		We fix $k>\alpha$ as above. Let $X$ be an fs log adic space that is log smooth over $\Spa(K,K^+)$. Denote by $U:=X_{\operatorname{tr}}$ the maximal open subset of $X$ on which the induced log structure is trivial. Then restriction induces an equivalence of categories $$\operatorname{Tors}_{G_k}(X_{\proket}) \xrightarrow{\sim} \operatorname{Tors}_{G_k}(U_{\proet}).$$ In particular, any $G_k$-torsor on $U_{\proet}$ extends uniquely to a $G_k$-torsor on $X_{\proket}$.
		
		Moreover, if $X$ is affinoid or proper and $\{U_n\}_{n\in\N}$ is an increasing sequence of affinoid open adic subspaces over $K$ such that $U=\bigcup_n U_n$, then restriction induces a bijection of pointed sets $$H^1_{\proet}(U,G_k) \xrightarrow{\sim} \lim_n H^1_{\proet}(U_n,G_k).$$ Thus the isomorphism class of a $G_k$-torsor on $U_{\proet}$ is determined by, and can be recovered from, a compatible system of isomorphism classes on the $U_{n,\proet}$.
	\end{thm}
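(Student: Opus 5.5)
The plan is to filter $G_k$ by its small subgroups and reduce everything to additive statements about $\mathcal O^+/p^m$ and $\widehat{\mathcal O}^+$, where Theorem \ref{mainlog}, Corollary \ref{period} and Propositions \ref{limitcoh}, \ref{limitcohlog} apply. Fix $k>\alpha$. For $k<k'$ in $v(K^\times)$, \cite[Lemma 4.24]{heuer2026gtorsor} gives an exact sequence
$$1\to G_{k'}\to G_k\to \overline{\mathfrak g}^{\,+}_k/\overline{\mathfrak g}^{\,+}_{k'}\to 1,$$
with abelian quotient isomorphic to $(\mathfrak m p^k\mathcal O^+/\mathfrak m p^{k'}\mathcal O^+)^d$. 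I also expect $G_k\simeq \mathfrak g^+_k$ via $\exp$, so $G_k=\lim_{k'}G_k/G_{k'}$ on the pro-(Kummer) \'etale site, with the usual $R^1\lim$ sequence for nonabelian $H^1$. First I treat the abelian quotients. By Corollary \ref{period}, or directly from Theorem \ref{mainlog} through the comparison $H^i_{\proket}(X,\mathcal O^+/p^m)\to H^i_{\proet}(U,\mathcal O^+/p^m)$, restriction $X_{\proket}\to U_{\proet}$ induces almost isomorphisms on all $H^i$ with coefficients $\mathcal O^+/p^m$. Tensoring with $\mathfrak m$ converts these into honest isomorphisms for the coefficients $\mathfrak m\mathcal O^+/\mathfrak m p^{m}\mathcal O^+$. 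This works because these coefficient sheaves are flat colimits $\colim_\epsilon p^\epsilon\mathcal O^+/p^{m+\epsilon}$ (up to rescaling), and $\mathfrak m\otimes(-)$ kills almost zero modules.

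Next I run the nonabelian d\'evissage for the equivalence of torsor categories. I argue by induction on $k'$ in steps of size $<$ some fixed amount, using the central-type extensions. Since $G_{k'}$ is normal but not central in $G_k$, I use the twisted forms: the quotient $G_k/G_{k'}$ is abelian and the conjugation action on it is trivial for $k'\le 2k$, by $\log$ and the commutator estimate. For $X_{\proket}\to U_{\proet}$, full faithfulness reduces to $H^0$ comparisons of twisted forms of $G_k$. Each such twisted form is again filtered with the same graded pieces, so the $H^0$, $H^1$ isomorphisms for the graded pieces give, by the five-lemma for nonabelian cohomology, bijections on $H^0$ and $H^1$ of $G_k/G_{k'}$ for every $k'$. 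Essential surjectivity also needs injectivity on $H^2$ of the graded pieces, which I again obtain from the additive comparison. Finally I pass to the limit over $k'$. Using $G_k=\lim_{k'}G_k/G_{k'}$, I write $H^1(-,G_k)$ as $\lim_{k'}H^1(-,G_k/G_{k'})$ modulo an $R^1\lim$ of $H^0$'s. These $H^0$'s agree on $X$ and $U$, so the two sides match. I expect this limit step to be the main obstacle: it needs the vanishing or controlled behaviour of $R^1\lim_{k'}$ of the $H^0$ groups, which is where honest (not almost) identifications are required. I would try to deduce it from $\widehat{\mathcal O}^+$ being the derived limit, via Corollary \ref{period}.

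For the second statement, with $X$ affinoid or proper, I use $H^1_{\proet}(U,G_k)\cong H^1_{\proket}(X,G_k)$ from the first part and compare with $\lim_n H^1_{\proet}(U_n,G_k)$. Surjectivity requires gluing a compatible system of classes. I first do this modulo $G_{k'}$, where the abelian pieces are controlled by Theorem \ref{mainlog}: the pro-systems $\{H^i_{\et}(U_n,\mathcal O^+/p^m)\}_n$ are almost constant, which after $\mathfrak m\otimes$ gives honestly constant pro-systems. Then I pass to $k'\to\infty$ using Propositions \ref{limitcoh} and \ref{limitcohlog}, which give the needed vanishing of $R^1\lim_m$ on $X$ and on each affinoid $U_n$. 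Injectivity follows the same pattern, using the $R^1\lim_n$ vanishing of $H^0$ supplied by the constancy of the pro-systems in Theorem \ref{mainlog}.
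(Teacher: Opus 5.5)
Your proposal is correct in outline and follows the route the paper indicates. You run a d\'evissage of $G_k$ along the successive additive quotients of \cite[Lemma 4.24]{heuer2026gtorsor}, with Theorem \ref{mainlog} controlling the graded pieces, and you pass to the limit over $k'$ in the second statement using Propositions \ref{limitcoh} and \ref{limitcohlog}. On the non-quasi-compact $U$, the honest comparison must come from applying $\mathfrak m\otimes -$ to the almost pro-isomorphisms over the quasi-compact $U_n$, not from commuting $H^i(U,-)$ with a colimit. Two further points: surjectivity of $H^1_{\proet}(U,G_k)\to\lim_n H^1_{\proet}(U_n,G_k)$ is automatic by gluing along the increasing union, so the real content of the second statement is injectivity. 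In the first statement, the $\lim^1_{k'}$ terms only need to agree on $X$ and $U$ (which your finite-level $H^0$ bijections give), not vanish.
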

	
	\begin{rem}
		The corresponding statements need not hold when $G_k$ is replaced by an arbitrary rigid analytic group. The first comparison already fails for $G=\mathbb G_a$. An analogous inverse-limit comparison in degree one also fails for general open rigid spaces: the nontrivial line bundles on open discs constructed in \cite[Proposition 6]{lazard1962zeros} give counterexamples for $G=\mathbb G_m$, since their restrictions to closed subdiscs are trivial.
	\end{rem}
	
	The above discussion also shows the difficulty in extending the $p$-adic Simpson correspondence to open varieties. As in the proper case \cite[Theorem 1.1]{heuer2024padicsimpsoncorrespondencesmooth}, we would like to construct a correspondence between pro-\'etale vector bundles and Higgs bundles. However, the preceding remark shows that even line bundles on open discs behave differently as the base field varies.

	For another example, let $X$ be the analytification of a smooth algebraic variety, and choose a smooth proper compactification $Y$ with a strict normal crossing boundary $D:=Y-X$. We endow $Y$ with the log structure associated to $D$, so that $Y$ is log smooth. In this case, there are two versions of the $p$-adic Simpson correspondence to consider: the usual correspondence on $X$, and a correspondence between pro-Kummer \'etale vector bundles and log Higgs bundles on $Y$.

	One key ingredient in \cite{heuer2024padicsimpsoncorrespondencesmooth} is the small local correspondence constructed in \cite[Theorem 6.5]{heuer2026moduli}. Roughly speaking, a small vector bundle is one that comes from a torsor under a sufficiently small subgroup of $\mathrm{GL}_r$, and there is a corresponding notion of a small Higgs bundle. Locally on a toric chart, the two categories are equivalent. The same proof works in the logarithmic setting and gives an equivalence between small pro-Kummer \'etale vector bundles and small log Higgs bundles. One should therefore be able to construct an equivalence between pro-Kummer \'etale vector bundles and log Higgs bundles on $Y$ from these local correspondences, as in the proper case.

	On the other hand, Theorem \ref{gtorsor} shows that, for each fixed $k>\alpha$, the category of $G_k$-torsors on $X_{\proet}$ is equivalent to the category of $G_k$-torsors on $Y_{\proket}$. Thus these small torsors on the open variety already extend to the logarithmic compactification. Therefore, for varieties which are not quasi-compact, we lose too much information when we restrict to the small case to study the correspondence on the open variety.


	\ifshowhidden
	
	\begin{proof}[Proof of Theorem \ref{opendisc}]
		(1) $\Rightarrow$ (2): this is Theorem \ref{stein}.
		
		(2) $\Rightarrow$ (1): in fact, the condition for $i=0$ already implies that $K$ is spherically complete: we endow $U_n$ with the log structure coming from the divisor $T_1\cdots T_d=0$ for all $n\geq 0$. Then we have $$H^0_{\et}(U_n, \mathcal{O}^+/p^m)\simeq H^0_{\ket}(U_n, \mathcal{O}^+/p^m)\simeq^a H^0_{\cts}(\Z_p^d,\mathcal{O}_K/p^m[ (\varpi_n^{-1}T_1)^{1/p^{\infty}},...,(\varpi_n^{-1}T_d)^{1/p^{\infty}}]).$$ The first isomorphism follows from \cite[Lemma 4.4.27]{dllz2023logadic} and \cite[Lemma 4.5.8]{dllz2023logadic}, by taking $Y=X_{\ket}$ and $Z=X_{\et}$, and the second almost isomorphism is the standard computation. Therefore $\mathcal{O}^+(U_n)/p^m$ is a direct summand of $H^0_{\et}(U_n, \mathcal{O}^+/p^m)$ in a compatible way by \cite[Remark 6.1.8]{dllz2023logadic}. Then one uses Proposition \ref{opendiscsection} to conclude.
	\end{proof}

	\section{Kummer \'etale \texorpdfstring{$\mathcal{O}_X^+/p$}{OX+/p}-cohomology}

	\begin{rem}
		cohomology of period sheaves
	\end{rem}
	
	Let $P$ be a toric monoid (i.e. a fs sharp monoid). Denote by $$\mathbb E:=\Spa(K\langle P\rangle,K^+\langle P\rangle)$$ with the log-structure induced by the natural map $P \to K\langle P\rangle$. The log adic space $\mathbb E$ admits a pro-Kummer \'etale cover as follows. For each $m \in \Z_{>0},$ let $\dfrac{1}{m}P$ be the toric monoid such that $P \hookrightarrow \dfrac{1}{m}P$ can be identified with the multiple map $[m]:P\to P.$ Let $P_{\Q_{\geq 0}}:=\colim_m \left(\dfrac{1}{m}P\right)$ and $P_{\Q}^{\operatorname{gp}}:=\left(P_{\Q_{\geq 0}}\right)^{\operatorname{gp}}\simeq P^{\operatorname{gp}}\otimes \Q.$ Denote by $$\mathbb E_{m}:=\Spa\left(K\left\langle\dfrac{1}{m}P\right\rangle,K^+\left\langle\dfrac{1}{m}P\right\rangle\right)$$ equipped with the log-structure induced by $\dfrac{1}{m}P \to K\langle\dfrac{1}{m}P\rangle.$ The morphism $\mathbb E_m \to \mathbb E$ is a finite Kummer \'etale cover with Galois group $$\Gamma_m=\operatorname{Hom}(P^{\operatorname{gp}},\boldsymbol{\mu}_m),$$ where $\boldsymbol{\mu}_m$ is the group of the $m$-th roots of unity in $C$. Denote the log affinoid perfectoid object by $$\widetilde{\mathbb E}:=\lim_m \mathbb E_{m} \in \mathbb E_{\proket},$$ where for $m|m'$, the transition maps are induced by $\dfrac{1}{m}P \hookrightarrow \dfrac{1}{m'}P.$ Then we have the associated perfectoid space $$\widehat{\widetilde{\mathbb E}}:=\Spa(K\langle P_{\Q_{\geq 0}}\rangle,K^+\langle P_{\Q_{\geq 0}}\rangle).$$ The morphism $\widehat{\widetilde{\mathbb E}} \to \mathbb E$ is a Galois pro-finite Kummer \'etale cover with Galois group $$\Gamma=\operatorname{Hom}(P^{\operatorname{gp}}_{\Q}/P^{\operatorname{gp}},\boldsymbol{\mu}_{\infty}),$$ where $\boldsymbol{\mu}_{\infty}:=\cup_{m\in \N} \boldsymbol{\mu}_{m}.$ The natural action of the Galois group $\Gamma$ on $K\langle P_{\Q_{\geq 0}}\rangle$ is given by $$\gamma(T^a)=\gamma(a)T^a$$ for all $\gamma \in \Gamma$ and $a \in P_{\Q_{\geq 0}},$ where $T^a$ is the corresponding element of $a$ in $K\langle P_{\Q_{\geq 0}}\rangle$.
	
	Suppose $X=\Spa(R,R^+)$ is an affinoid fs log adic space of finite type over $\Spa(K,K^+)$ with a strictly \'etale morphism $X \to \mathbb E$, which can be written as a composite of rational embedding and finite étale maps. Pulling back $\widehat{\widetilde{\mathbb E}} \to \mathbb E$ along $X \to \mathbb E$, we get $$\widetilde{X}:=X\times_{\mathbb E}\widehat{\widetilde{\mathbb E}}=\Spa(R_{\infty},R^+_{\infty}),$$ which is a Galois pro-finite Kummer \'etale cover over $X$ with Galois group $\Gamma$.
	
	Let $S \subset P$ be the minimal subset of generators of $P$. Denote by $D \subset X$ the closed subset defined by $g:=\prod_{a\in S}T^a=0$, and denote by $\widetilde{D}$ the pullback of $D$ along $\widetilde{X}\to X.$ Let $U:=X-D$ and $\widetilde{U}:=\widetilde{X}-\widetilde{D}.$ The map $\widetilde{U}\to U$ is a Galois pro-finite \'etale cover with Galois group $\Gamma$. We note that $U$ is a quasi-Stein space: if we denote by $V_n:=\Spa(S_n,S_n^+),$ where $$S_n:=R\left\langle \dfrac{p^n}{g}\right\rangle,$$ then $U$ is the increasing union of $V_n$. We also denote by $\widetilde{V}_n=\Spa(S_{n,\infty},S_{n,\infty}^+)$ the pullback of $V_n$ along $\widetilde{U}\to U.$
	
	\begin{proof}
		Therefore, we are reduced to showing the following theorem, which has its own interest.
	\end{proof}
	
	\begin{theorem}
		We have an almost isomorphism $$H^i_{\operatorname{cts}}(\Gamma,R^+_{\infty}/p)\simeq H^i_{\proet}(U,\mathcal{O}^+_{U}/p)$$ for all $i\geq 0.$
	\end{theorem}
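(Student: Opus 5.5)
The plan is to compute both sides through the Galois cover $\widetilde U\to U$ and compare them by means of the weight decomposition of Subsection \ref{disc}, now over the general toric monoid $P$. First, the left side: $\widetilde X\to X$ is a Galois pro-finite Kummer \'etale cover with group $\Gamma$, and $\widetilde X$ is log affinoid perfectoid. Hence the Cartan--Leray spectral sequence (as in Subsection \ref{disc}) identifies $H^i_{\cts}(\Gamma,R^+_\infty/p)$ almost with $H^i_{\ket}(X,\mathcal O^+/p)$.

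On the right side, $\widetilde U\to U$ is a Galois pro-finite \'etale cover with group $\Gamma$. I would work with the exhaustion $U=\bigcup_n V_n$ and, by Lemma \ref{choice}, only with this chosen sequence. Each $\widetilde V_n$ is affinoid perfectoid, so
$$\rg_{\proet}(U,\mathcal O^+/p)\simeq R\lim_n\rg_{\et}(V_n,\mathcal O^+/p)\simeq^a R\lim_n\rg_{\cts}(\Gamma,S^+_{n,\infty}/p).$$
The goal is then to show that the pro-system $\{H^i_{\cts}(\Gamma,S^+_{n,\infty}/p)\}_n$ is almost isomorphic to the constant pro-system $c(H^i_{\cts}(\Gamma,R^+_\infty/p))$. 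Once this holds, $R^1\lim$ almost vanishes, and the Milnor sequence of Remark \ref{remequiv} gives the claimed isomorphism.

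To get the pro-constancy, I would first treat the model case $X=\mathbb E$. Write $\mathcal O_K/p[P_{\Q_{\ge0}}][(p^n/g)^{1/p^\infty}]$ as $R^+_\infty/p$ plus a direct sum of rank-one pieces indexed by weights $\mathbf w\in P^{\operatorname{gp}}_{\Q}$ lying outside $P_{\Q_{\ge0}}$. On each such piece the transition map $n+N\to n$ is multiplication by a scalar $p^{N\cdot a(\mathbf w)}$, where $a(\mathbf w)>0$ is the minimal exponent of $g$ needed to make $T^{\mathbf w}$ integral.

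The argument from Subsection \ref{disc} then applies. If $a(\mathbf w)\ge1$, the transition map is already zero modulo $p$. If $a(\mathbf w)<1$, some $\gamma\in\Gamma$ acts on the piece by a root of unity $\zeta\ne1$ with $v(\zeta-1)\le 2a(\mathbf w)$, and $\zeta-1$ kills the Koszul cohomology. Together these give a uniform $N=2$ for which the transitions vanish.

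I expect the main obstacle to be this step: for a non-simplicial $P$, finding such a $\gamma$ with $v(\zeta-1)$ bounded linearly in $a(\mathbf w)$. Here the weight $\mathbf w$ has denominators in $P^{\operatorname{gp}}$ not aligned with the generators in $S$. I would first try to reduce to the simplicial or free case by resolution of toric singularities and log \'etale descent \cite[Theorems 1.1, 1.2]{xslogdescent}. That reduction brings us back to the setting of Theorem \ref{mainlog} and Subsection \ref{disc}.

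For general $X$ strictly \'etale over $\mathbb E$, I would argue as in the tubular-neighborhood subsection. Tensor the decomposition with the \'etale algebra; this is possible after pulling back along a toric chart with group $\Gamma'$, using the spectral sequence for $\Gamma\times\Gamma'$ together with Lemma \ref{prospectral}. Then glue via the \v Cech-to-cohomology spectral sequence, again using Lemma \ref{prospectral}. This yields pro-constancy, hence the almost isomorphism for all $i\ge0$.
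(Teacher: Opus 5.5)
Your reduction to almost pro-constancy of $\{H^i_{\cts}(\Gamma,S^+_{n,\infty}/p)\}_n$ and the weight computation for $X=\mathbb E$ follow the paper's strategy. The step that fails is the last one, passing from $\mathbb E$ to a general $X$ strictly \'etale over $\mathbb E$.

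The tubular-neighbourhood argument needs a genuine product $\Spa(A,A^\circ)\times_K\mathbb B^d$. There $\Gamma$ acts trivially on $A^\circ_\infty/p$, so $H^j_{\cts}(\Gamma,-)$ commutes with $-\otimes A^\circ_\infty/p$, and the auxiliary chart with group $\Gamma'$ lives on that $\Gamma$-trivial factor. A space strictly \'etale over $\mathbb E$ has no such factor: its coordinates are the $T^a$ themselves, so there is no toric chart with group $\Gamma'$ to pull back along. Moreover, your weight decomposition is a decomposition of $\mathcal O_K/p$-modules, not of $K^+\langle P_{\Q_{\geq 0}}\rangle/p$-modules, so it cannot be tensored with the \'etale algebra.

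In fact $R^+_\infty$ is not even almost a base change of $R^+$. Take the rational subset $X=\{|T_1-T_2|\leq|p|\}$ of $\mathbb B^2$. The element $(T_1^{1/p}-T_2^{1/p})/p^{1/p}$ lies in $R^+_\infty$, but it is not almost in the image of $R^+\widehat\otimes_{K^+\langle T_1,T_2\rangle}K^+\langle\Q^2_{\geq 0}\rangle$; to see this, reduce modulo $\mathfrak m$ and compare total degrees. The discrepancy is only bounded, as in \cite[Lemma 4.5]{scholze2013p}.

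Controlling this defect is exactly what the paper's argument does. By \cite[Lemma 4.5]{scholze2013p} and \cite[Lemma 6.1.9]{dllz2023logadic}, the cokernel of $S^+_{n,m}\widehat\otimes_{S^+_{n,m,\mathbb E}}S^+_{n,\infty,\mathbb E}\to S^+_{n,\infty}$ is killed by $p^{\epsilon}$ for $m$ large. This bound is uniform in $n$ because $\widetilde V_n=\widetilde V_{n,\mathbb E}\times_{\widetilde{\mathbb E}}\widetilde X$. The paper then uses the Hochschild--Serre spectral sequence for $m\Gamma\subset\Gamma$, which acts trivially on $S^+_{n,m}$, to reduce to the toric computation.

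If you want to keep your route instead, the missing ingredient is Kiehl's Theorem \ref{tubular} together with a hypercover, as in Subsection \ref{descent}, applied after your reduction to smooth $X$ with strict normal crossing boundary. That amounts to reproving Theorem \ref{mainlog}. Since $X$ here is quasi-compact and log smooth with $U=X_{\operatorname{tr}}$, you can simply cite that theorem, together with Remark \ref{remequiv} and Cartan--Leray for $\widetilde X\to X$.

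Finally, the step you flag as the main obstacle is harmless, and no resolution is needed for the model case. Suppose $\mathbf w\notin P_{\Q_{\geq 0}}$ has order $p^t$ in $P^{\operatorname{gp}}_\Q/P^{\operatorname{gp}}$. Then some primitive facet normal $u$ of the cone of $P$ satisfies $\langle u,\mathbf w\rangle\leq -p^{-t}$. Hence $a(\mathbf w)\geq p^{-t}/C$ with $C:=\max_u\langle u,\sum_{a\in S}a\rangle$. This gives $v(\zeta-1)\leq 2C\,a(\mathbf w)$ when $t\geq 1$, and $a(\mathbf w)\geq 1/C$ when $t=0$. Weights whose order is not a power of $p$ give cohomology killed by a unit. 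So the model case works for every toric $P$, with an $N$ depending on $P$.
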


	\begin{proof}
		We have 
		\begin{align*}
			\rg_{\proet}(U,\mathcal{O}^+_U/p) &\simeq^a R\lim_n\rg_{\proet}(V_n, \mathcal{O}^+_{V_n}/p)\\
			&\simeq^a R\lim_n\rg_{\cts}(\Gamma,\rg_{\proet}(\widetilde{V}_n, \mathcal{O}^+_{\widetilde{V}_n}/p)) \\
			&\simeq^a R\lim_n\rg_{\cts}(\Gamma,S_{n,\infty}^+/p),
		\end{align*}  
		Therefore for $i \geq 0$ we have the almost short exact sequence $$0 \to R^1\lim_n H^{i-1}_{\cts}(\Gamma,S_{n,\infty}^+/p) \to H^i_{\proet}(U,\mathcal{O}^+_U/p) \to \lim_n H^i_{\cts}(\Gamma,S_{n,\infty}^+/p) \to 0.$$
		
		To prove the theorem, it suffices to show that $$\lim_n H^i_{\cond}(\Gamma,S_{n,\infty}^+/p) \simeq^a H^i_{\cond}(\Gamma,R_{\infty}^+/p),$$ and $$R^1\lim_n H^{i-1}_{\cond}(\Gamma,S_{n,\infty}^+/p)\simeq^a 0$$ for all $i \geq 0.$
		
		We first consider the case for $X=\mathbb{E}.$ In this case, we rewrite $U,V_n,S_n^+,...$ as $U_{\mathbb{E}},V_{n,\mathbb{E}},S_{n,\mathbb{E}}^+...$
		
		We now consider a general $X$ that there exists a strictly \'etale morphism $X \to \mathbb E$, which can be written as a composite of rational embedding and finite étale maps. Denote by $V_{n,m}:=V_n\times_{\mathbb{E}}\mathbb{E}_m=\Spa(S_{n,m},S^+_{n,m})$. Similarly we define $V_{n,m,\mathbb{E}}:=V_{n,\mathbb{E}}\times_{\mathbb{E}}\mathbb{E}_m=\Spa(S_{n,m,\mathbb{E}},S^+_{n,m,\mathbb{E}}).$ According to \cite[Lemma 4.5]{scholze2013p}, for any $n$ the cokernel $C_n$ of map $$R_m^+\widehat{\otimes}_{S_{n,\mathbb{E}}^+}S_{n,\infty,\mathbb{E}}^+ \simeq^a S_{n}^+\widehat{\otimes}_{S_{n,\mathbb{E}}^+} S_{n,\infty,\mathbb{E}}^+ \to S_{n,\infty}^+$$ is killed by some $p^{a_n},$ and for any $\epsilon>0$, there exists some $M_n>0$ such that the cokernal of the map $$ S_{n,m}^+\widehat{\otimes}_{S_{n,m,\mathbb{E}}^+} S_{n,\infty,\mathbb{E}}^+ \to S_{n,\infty}^+$$ is killed by $p^{\epsilon},$ for any $m\geq M.$ \red{need to confirm $S_{n,\infty,\mathbb{E}}^+$} \footnote{In \cite[Lemma 4.5]{scholze2013p}, the statement is for $A_j$, defined to be the $p$-adic completion of the $p$-torsion free quotient of $B_j := S_j^+ \otimes_{R_j^+} R^+$. However, claims (ii) and (iii) in \cite[Lemma 4.5]{scholze2013p} remain to be true for $\widehat{B_j}$, as one easily shows that the natural map $\widehat{B_j} \to A_j$ is surjective.} We note that we can choose the numbers $a_n:=a$ and $M_n:=M$ that are independent of $n$: by \cite[Lemma 6.1.9]{dllz2023logadic}, the cokernal $C$ of the map $$R^+\widehat{\otimes}_{K^+\langle P\rangle} K^+\langle P_{\Q\geq 0}\rangle \to R_{\infty}^+$$ is killed by some $p^{a}$.\footnote{For the same reason.} As $\widetilde{V}_n$ is the fiber product $\widetilde{V}_{n,\mathbb{E}} \times_{\widetilde{\mathbb E}} \widetilde{X}$, we have an almost isomorphism $$R_{\infty}\widehat{\otimes}_{K^+\langle P_{\Q\geq 0}\rangle} S_{n,\infty,\mathbb{E}}^+ \simeq^a S_{n,\infty}^+,$$ and therefore we have an almost isomorphism $$C_n \simeq^a C \widehat{\otimes}_{K^+\langle P_{\Q\geq 0}\rangle} S_{n,\infty,\mathbb{E}}^+,$$ which shows that we can choose $a_n = a$ for all $a_n$. The same arguments shows that the number $M$ is also independent of $n$.
		
		Before concluding the theorem, we state a simple lemma which helps us to control the torsion.
		
		\begin{lem}
			(1) Let $G$ be a profinite group and $f:A\to B$ be a map between discrete $G$-modules which is an isomorphism up to $p^c$. Then for any $n \geq 0,$ the induced map $H^n_{\cts}(G,A) \to H^n_{\cts}(G,B)$ between continuous cohomology is an isomorphism up to $p^{2c}$.
			
			(2) Let $f: \{A_i\}_{i \in I} \to \{B_i\}_{i \in I}$ be a map between pro-systems of abelian groups that is given by $f_i:A_i \to B_i$ in a compatible way, such that $f_i$ is an isomorphism up to $p^c$ ($c$ is independent of $i$). Then for any $i \geq 0,$ the induced map $R^n\lim f: R^n\lim A_i \to R^n\lim B_i$ is an isomorphism up to $p^{2c}$.
		\end{lem}
		
		\begin{proof}
			Split $f$ and use the long exact sequence.
		\end{proof}

		To conclude our theorem, we are reduced to showing that for any $\epsilon>0$, the map between pro-systems $$\{H^i_{\cts}(\Gamma,R_{\infty}^+/p)\}_{n\in \N} \to \{H^i_{\cts}(\Gamma,S_{n,\infty}^+/p)\}_{n\in \N}$$ has isomorphism $\lim$ and $R^1\lim$ up to $p^{\epsilon}$-torsion. Based on our previous discussion, we can choose $M$ such that for any $m\geq M$ and $n$, the cokernal of the map $$ S_{n,m}^+\widehat{\otimes}_{S_{n,m,\mathbb{E}}^+} S_{n,\infty,\mathbb{E}}^+ \to S_{n,\infty}^+$$ is killed by $p^{\epsilon'},$ where $\epsilon'<\dfrac{\epsilon}{8}.$ Consider the following commutative diagram of pro-systems	
		$$
		\begin{tikzcd}
			\{H^i_{\cts}(\Gamma,R_{m}^+/p\otimes_{K^+\langle\frac{1}{m}P\rangle/p} K^+\langle P_{\Q\geq 0}\rangle/p)\}_{n\in \N} \arrow[r] \arrow[d] & \{H^i_{\cts}(\Gamma,S_{n,m}^+/p\otimes_{S_{n,m,\mathbb{E}}^+/p} S_{n,\infty,\mathbb{E}}^+/p)\}_{n\in \N} \arrow[d] \\
			\{H^i_{\cts}(\Gamma,R_{\infty}^+/p)\}_{n\in \N} \arrow[r]                  & 	\{H^i_{\cts}(\Gamma,S_{n,\infty}^+/p)\}_{n\in \N},
		\end{tikzcd}
		$$
		the previous lemma shows that both vertical maps are isomorphism up to $p^{2\epsilon'}$-torsion (hence are isomorphism up to $p^{4\epsilon'}$-torsion after taking $\lim$ or $R^1\lim$). Therefore, it suffices to show that the top horizontal map between pro-systems are almost pro-isomorphism. The key point is that we have the spectral sequence $$E_{n,2}^{i,j}:=H^i_{\cts}(\Gamma/m,H^j_{\cts}(m\Gamma,S_{n,m}^+/p\otimes_{S_{n,m,\mathbb{E}}^+/p} S_{n,\infty,\mathbb{E}}^+/p))\Rightarrow H^i_{\cts}(\Gamma,S_{n,m}^+/p\otimes_{S_{n,m,\mathbb{E}}^+/p} S_{n,\infty,\mathbb{E}}^+/p)$$ for every $n$. As $m\Gamma$ acts trivially on $S_{n,m}^+$, \cite[Remark 6.1.8]{dllz2023logadic} shows that $$H^j_{\cts}(m\Gamma,S_{n,m}^+/p\otimes_{S_{n,m,\mathbb{E}}^+/p} S_{n,\infty,\mathbb{E}}^+/p) \simeq S_{n,m}^+/p\otimes_{S_{n,m,\mathbb{E}}^+/p}H^j_{\cts}(m\Gamma, S_{n,\infty,\mathbb{E}}^+/p)$$ for every $n \geq 0$. Therefore by our previous calculation shows that we have the almost pro-isomorphism $$\{H^j_{\cts}(m\Gamma,S_{n,m}^+/p\otimes_{S_{n,m,\mathbb{E}}^+/p} S_{n,\infty,\mathbb{E}}^+/p)\}_{n\in \N} \simeq^a \{S_{n,m}^+/p\otimes_{K^+\langle\frac{1}{m}P\rangle/p}H^j_{\cts}(m\Gamma,K^+\langle P_{\Q\geq 0}\rangle/p)\}_{n\in \N},$$ \red{need why??} where the right side is constant. The next lemma on homological algebra concludes our claim, which finishes the proof of the theorem.
		\red{group coh: $H^i_{\cts}(\Gamma,R_{\infty}^+/p)$ and $H^i_{\cts}(\Gamma,R_{m}^+/p)$ are isomorphism up to small p torsion?}
	\end{proof}

	\begin{proof}
		The Cartan–Leray spectral sequence (\cite[Proposition 4.12]{bosco2023padicproetalecohomologydrinfeld}) gives $$E_{2}^{ij} = H_{\text{cts}}^{i}(\Gamma, H_{\proet}^{j}(\widetilde{U}, \mathcal{O}^+_{\widetilde{U}}/p)) \Rightarrow H_{\proet}^{i+j}(U, \mathcal{O}^+_{U}/p).$$ Therefore, the theorem follows from the next three lemmas.
	\end{proof}
	
	\begin{lem}
		We have $$H_{\proet}^{i}(\widetilde{U}, \mathcal{O}_{\widetilde{U}}^{+}/p) \simeq^a
		\begin{cases} \lim_n(S_{n,\infty}^{+}/p) & \text{if } i=0 \\ R^1 \lim_n(S_{n,\infty}^{+}/p) & \text{if } i=1 \\ 0 & \text{if } i \ge 2
		\end{cases}$$
		Therefore for $i \geq 0$ we have the almost short exact sequence $$0 \to R^1\lim_n H^{i-1}_{\cts}(\Gamma,S_{n,\infty}^+/p) \to H^i_{\proet}(U,\mathcal{O}^+_U/p) \to \lim_n H^i_{\cts}(\Gamma,S_{n,\infty}^+/p) \to 0.$$
		
		To prove the theorem, it suffices to shows that $$\lim_n H^i_{\cts}(\Gamma,S_{n,\infty}^+/p) \simeq^a H^i_{\cts}(\Gamma,R_{\infty}^+/p),$$ and $$R^1\lim_n H^{i-1}_{\cts}(\Gamma,S_{n,\infty}^+/p)=0$$ for all $i \geq 0,$ or equivalently, the

	\end{lem}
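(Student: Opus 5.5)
The plan is to compute everything through the exhaustions $\widetilde U=\bigcup_n\widetilde V_n$ and $U=\bigcup_n V_n$ by affinoid opens, and to use the almost acyclicity of $\mathcal O^+/p$ on affinoid perfectoid spaces. First I will establish the identity
$$\rg_{\proet}(\widetilde U,\mathcal O^+_{\widetilde U}/p)\simeq R\lim_n \rg_{\proet}(\widetilde V_n,\mathcal O^+/p),$$
together with its analogue for $U=\bigcup_n V_n$. The $\widetilde V_n$ form an increasing open cover of $\widetilde U$, and $\rg(\widetilde U,-)=R\Gamma$ of the sheaf-theoretic limit. So I would argue as in Remark \ref{remequiv}: write $\rg(\widetilde U,-)$ as the derived limit over the \v Cech nerve of the cover $\bigsqcup_n \widetilde V_n\to\widetilde U$. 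Since the intersections are again members of the cover ($\widetilde V_n\cap\widetilde V_{n'}=\widetilde V_{\min(n,n')}$), this \v Cech limit is cofinal with the sequential limit $R\lim_n$.

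Next, each $\widetilde V_n=\Spa(S_{n,\infty},S_{n,\infty}^+)$ is affinoid perfectoid, being a rational subset of the affinoid perfectoid $\widetilde X$. Hence \cite[Lemma 4.10]{scholze2013p} gives $\rg_{\proet}(\widetilde V_n,\mathcal O^+/p)\simeq^a S_{n,\infty}^+/p$, concentrated in degree $0$. These identifications are functorial in $n$, so they give a morphism of towers whose terms are almost isomorphisms. To pass the almost isomorphism through $R\lim$, I would note that $R\lim$ of a tower is computed by the two-term complex $\prod_n\to\prod_n$. A product of almost zero modules is almost zero, since each $t\in\mathfrak m$ kills every factor. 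So $R\lim$ preserves almost quasi-isomorphisms, and we get $\rg_{\proet}(\widetilde U,\mathcal O^+/p)\simeq^a R\lim_n S_{n,\infty}^+/p$. For a countable tower, $R\lim$ has cohomological amplitude $[0,1]$. Reading off degrees then gives $\lim$ in degree $0$, $R^1\lim$ in degree $1$, and almost zero in degrees $\geq 2$.

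For the short exact sequence I would run the same argument on $U$. Each $\widetilde V_n\to V_n$ is a Galois pro-finite \'etale $\Gamma$-cover with affinoid perfectoid source. The Cartan--Leray spectral sequence, combined with the vanishing just proved on each $\widetilde V_n$, gives
$$\rg_{\proet}(V_n,\mathcal O^+/p)\simeq^a\rg_{\cts}(\Gamma,S_{n,\infty}^+/p).$$
This identification is compatible with restriction in $n$, because the $\widetilde V_n$ are pulled back from a single cover $\widetilde U\to U$. Taking $R\lim_n$ and using $\rg(U)\simeq R\lim_n\rg(V_n)$ yields $\rg_{\proet}(U,\mathcal O^+_U/p)\simeq^a R\lim_n\rg_{\cts}(\Gamma,S_{n,\infty}^+/p)$. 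The Milnor short exact sequence for $R\lim$ of a tower of complexes is exactly the displayed almost short exact sequence.

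I expect the main obstacle to be the first step in the pro-\'etale setting, namely that $\rg(U,-)$ equals $R\lim_n\rg(V_n,-)$ for a non-quasi-compact space. This needs $U$ to be the colimit of the $V_n$ as sheaves on the site; I would deduce it from the fact that every quasi-compact object over $U$ factors through some $V_n$, as in Lemma \ref{choice}. A second point of care is the functoriality of the Cartan--Leray identification in $n$. Here I would use the \v Cech nerves $\widetilde V_n\times_{V_n}\cdots$, which are $\Gamma^k\times\widetilde V_n$ and hence compatible with the transition maps.
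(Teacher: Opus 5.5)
Your proposal is correct and follows the paper's route. The paper's two-line proof is the Milnor exact sequence for the exhaustion $\widetilde U=\bigcup_n\widetilde V_n$ plus the almost acyclicity of $\mathcal O^+/p$ on the affinoid perfectoids $\widetilde V_n$ (citing \cite[Lemma 4.12]{scholze2013p}), and the short exact sequence for $U$ comes, exactly as in your argument, from $\rg_{\proet}(U,\mathcal O^+/p)\simeq^a R\lim_n\rg_{\cts}(\Gamma,S^+_{n,\infty}/p)$ via Cartan--Leray on each $V_n$; you just supply details the paper leaves implicit (why $\rg$ of the union is $R\lim$, why $R\lim$ preserves almost quasi-isomorphisms, and compatibility in $n$).
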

	
	\begin{proof}
		As $U$ is the increasing union of $V_n$, for $i \geq 0$ we have the following exact sequence
		$$ 0 \to R^1{\lim_{n}} H_{\proet}^{i-1}(\widetilde{V}_n, \mathcal{O}_{\widetilde{U}}^+/p) \to H_{\proet}^i(\widetilde{U}, \mathcal{O}_{\widetilde{U}}^+/p) \to \lim_{n} H^i_{\proet}(\widetilde{V}_n, \mathcal{O}_{\widetilde{U}}^+/p) \to 0.$$ The lemma then follows from \cite[Lemma 4.12]{scholze2013p}.
	\end{proof}
	
	\begin{lem}
		(1) The map $R^+_{\infty}/p \to H^0_{\proet}(\widetilde{U},\mathcal{O}^+_{\widetilde{U}}/p)$ is an almost injective with respect to $p^{1/p^{\infty}}$, and is an almost surjective with respect to $(pg)^{1/p^{\infty}}$.
		
		(2) We have an almost isomorphism $$ H^i_{\operatorname{cts}}(\Gamma,H^0_{\proet}(\widetilde{U},\mathcal{O}^+_{\widetilde{U}}/p))\simeq^a H^i_{\operatorname{cts}}(\Gamma,R^+_{\infty}/p)$$ with respect to $p^{1/p^{\infty}}$.
	\end{lem}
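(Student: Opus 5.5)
The plan is to get part (1) from the quantitative Riemann extension theorem (Theorem \ref{qriemann}) together with the description of $H^0$ in the preceding lemma. Part (2) cannot be deduced from (1), since cohomology does not improve the $(pg)^{1/p^\infty}$-almost condition (see the discussion after Theorem \ref{qriemann}). For (2) I would instead use the stronger degree-zero comparison of Proposition \ref{almostperf}, and then handle the topology on the coefficients.

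\emph{Step 1 (part (1)).} By the preceding lemma, $H^0_{\proet}(\widetilde U,\mathcal O^+_{\widetilde U}/p)\simeq^a \lim_n S^+_{n,\infty}/p$, compatibly with the map from $R^+_\infty/p$. Write the map of pro-systems $c(R^+_\infty/p)\to\{S^+_{n,\infty}/p\}_n$ as a four-term exact sequence
\[
0\to\{K_n\}\to c(R^+_\infty/p)\to\{S^+_{n,\infty}/p\}\to\{C_n\}\to 0 .
\]
By Theorem \ref{qriemann}, $\{K_n\}$ is almost pro-zero with respect to $p^{1/p^\infty}$, and $\{C_n\}$ is almost pro-zero with respect to $(pg)^{1/p^\infty}$. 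Splitting into two short exact sequences via the image $\{I_n\}$, I will use that a pro-zero system has vanishing $\lim$ and $R^1\lim$, and that this persists after multiplying by $t$ in the relevant almost ideal. This gives:
\begin{itemize}
\item $\lim_n K_n\simeq^a 0$ and $R^1\lim_n K_n\simeq^a 0$ (for $p^{1/p^\infty}$), so $R^+_\infty/p\to\lim_n I_n$ is almost injective for $p^{1/p^\infty}$;
\item $\lim_n I_n\to\lim_n S^+_{n,\infty}/p$ is injective with cokernel inside $\lim_n C_n\simeq^a 0$ for $(pg)^{1/p^\infty}$.
\end{itemize}
This is exactly (1), and it re-derives the last sentence of Theorem \ref{qriemann}.

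\emph{Step 2 (part (2), algebraic input).} Proposition \ref{almostperf} upgrades the map $f\colon R^+_\infty/p\to M:=H^0_{\proet}(\widetilde U,\mathcal O^+/p)$ to an almost isomorphism for $p^{1/p^\infty}$. Hence $\ker f$ and $\operatorname{coker} f$ are killed by every $p^\epsilon$, $\epsilon>0$.

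\emph{Step 3 (topology).} The source of $f$ is discrete, while $M=\lim_n S^+_{n,\infty}/p$ carries the inverse-limit topology. So I cannot naively use long exact sequences of continuous cohomology. I would pass to the condensed setting as in the text. There $\underline{\ker f}\to\underline{R^+_\infty/p}\to\underline M\to\underline{\operatorname{coker} f}$ is an exact sequence of condensed $\Gamma$-modules, giving long exact sequences in $H^i_{\cond}(\Gamma,-)$. Moreover $H^i_{\cond}(\Gamma,-)(*)$ recovers $H^i_{\cts}$ for the discrete module and for $M$ with its limit topology, by \cite[Remark 2.12 and Proposition B.2]{bosco2023padicproetalecohomologydrinfeld}. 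If a condensed module $N$ is killed by $p^\epsilon$, then so is every $H^i_{\cond}(\Gamma,N)$, since the cochain complex $\underline{\operatorname{Hom}}(\Gamma^i,N)$ is $\mathcal O_K$-linear. Splitting $f$ into two short exact sequences and chasing the long exact sequences then shows that $H^i_{\cts}(\Gamma,f)$ has kernel and cokernel killed by $p^{2\epsilon}$ for every $\epsilon$, which is the desired almost isomorphism.

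\emph{Main obstacle.} The expected sticking point is Step 3: checking that the kernel and cokernel, with their subspace and quotient topologies from the limit topology, give a genuinely exact sequence of condensed modules compatible with the $\Gamma$-actions. I would try first to write $\underline M=\lim_n\underline{S^+_{n,\infty}/p}$ (each term discrete). I would then deduce the almost statement termwise from the weight decomposition behind Proposition \ref{almostperf}, so that all the objects involved are limits of discrete modules.
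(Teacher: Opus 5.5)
\textbf{Verdict.} Your proposal is correct in the setting where Proposition \ref{almostperf} is proved. Part (1) is the paper's argument; for part (2) you take a genuinely different route from the lemma's own proof, and Step 3 still needs the closing argument sketched at the end.

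\textbf{Part (1).} The lemma's proof simply cites Bhatt's quantitative Riemann extension theorem (Theorem \ref{qriemann}). Your pro-system bookkeeping only makes the passage through $\lim_n S^+_{n,\infty}/p$ explicit.

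\textbf{Part (2): the two routes.} The paper handles $i=0$ first. It uses Galois descent along $\widetilde U\to U$ and Kummer descent along $\widetilde X\to X$ to identify the two modules of invariants with $\mathcal O^+(U)/p$ and $R^+/p$. It then concludes with Bartenwerfer's Riemann extension theorem on the normal space $X$. Higher $i$ are asserted via Koszul complexes, with both coefficient modules treated as discrete.

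You instead push the coefficient-level almost isomorphism of Proposition \ref{almostperf} through long exact sequences in condensed cohomology, keeping the inverse-limit topology on $M$. This is how the paper obtains the corollary in Subsection \ref{riemannperf} (whose content is your part (2)) from Proposition \ref{almostperf}.

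\textbf{What each route buys.} Your route treats all $i$ at once and takes the topology seriously. The paper's route needs only normality of $X$, so it aims at arbitrary toric charts $X\to\mathbb E$. As written, however, it conflates $H^0(-,\mathcal O^+/p)$ with $\mathcal O^+(-)/p$. Already for $X=\mathbb B^1$, the element $x\,T^{1/p}$ with $x=p/(\zeta_p-1)$ is a $\Gamma$-invariant of $R^+_\infty/p$ that does not lie, even almost, in $R^+/p$. So its $i=0$ step amounts to the comparison $H^0(U,\mathcal O^+/p)\simeq^a H^0_{\ket}(X,\mathcal O^+/p)$, the degree-zero case of Theorem \ref{mainlog}, which Bartenwerfer alone does not give. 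Its passage to $i>0$ is also not justified.

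\textbf{Scope of your route.} Proposition \ref{almostperf} is only established for charts \'etale over $\mathbb B^d$ with the $p$-power tower. The lemma sits in a setting with an arbitrary toric monoid $P$ and the full Kummer tower. There $X$ may have toric singularities, so tubular neighbourhoods are unavailable. You would first have to redo the weight decomposition over $K^+\langle P_{\Q_{\geq 0}}\rangle$.

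\textbf{Step 3, first remark.} Do not run the weight argument at the level of pro-systems or of $R\lim_n\rg_{\cts}(\Gamma,S^+_{n,\infty}/p)$. The number of transition steps needed to kill a weight depends on the weight, so the weight pro-system is not almost pro-zero (Proposition \ref{failperfectoid}). Only the underived limit $\underline M=\lim_n\underline{S^+_{n,\infty}/p}$ is controlled.

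\textbf{Step 3, second remark.} The exactness issue you flag can be settled without a global weight decomposition, which you only have on tubular neighbourhoods. Put $A=R^+_\infty/p$ and $N_n=\ker(M\to S^+_{n,\infty}/p)$, and fix $t\in\mathfrak m$.
\begin{enumerate}
\item The almost injectivity in Theorem \ref{qriemann} gives $n$ with $t\cdot\ker(A\to S^+_{n,\infty}/p)=0$.
\item Together with $tM\subset f(A)$ (Proposition \ref{almostperf}), this yields $t^2N_n=0$.
\item Let $\phi$ be any continuous map from a profinite set to $M$. Then $t^2\phi$ factors through the discrete quotient $M/N_n$, takes finitely many values in $f(A)$, and lifts to a locally constant map into $A$.
\item Hence the condensed cokernel of $\underline A\to\underline M$ is killed by $t^2$, and the kernel is the discrete module $\underline{\ker f}$.
\end{enumerate}
With this, your long exact sequence argument goes through.
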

	
	\begin{proof}
		The first clain is a special case of ``quantitative form of Scholze’s Riemann extension theorem" proved in \cite[Theorem 4.2]{bhatt2018directsumconj}.
		
		To see the almost isomorphism between group cohomology, we first prove that $$ H^i_{\operatorname{cts}}(\Gamma,H^0_{\proet}(\widetilde{U},\mathcal{O}^+_{\widetilde{U}}/p))\simeq^a H^i_{\operatorname{cts}}(\Gamma,R^+_{\infty}/p)$$ for $i=0$ with respect to $p^{1/p^{\infty}}$. In this case, as the map $\widetilde{U}\to U$ is a Galois pro-finite \'etale cover with Galois group $\Gamma$, we have an almost isomorphism with respect to $p^{1/p^{\infty}}$: $$H^0_{\operatorname{cts}}(\Gamma,H^0_{\proet}(\widetilde{U},\mathcal{O}^+_{\widetilde{U}}/p)) \simeq^a \mathcal{O}^+_U(U)/p.$$ On the other hand, as the map $\widetilde{X} \to X$ is a Galois pro-finite Kummer \'etale cover with Galois group $\Gamma$, by \cite[Theorem 5.4.3]{dllz2023logadic} we have an almost isomorphism with respect to $p^{1/p^{\infty}}$: $$H^0_{\operatorname{cts}}(\Gamma,R^+_{\infty}/p) \simeq^a H^0_{\proket}(\Gamma,\mathcal{O}^+_{\widetilde{X}}/p) \simeq^a \mathcal{O}^+_{{X}}(X)/p \simeq R^+/p.$$ However, as $P$ is a toric chart, $\mathbb{E}$ is normal. Therefore the underlying space of $X$ is normal as it is \'etale over the underlying space of $\mathbb{E}$. The classical Riemann extension theorem for normal rigid spaces in \cite[§3]{bartenwerfer1976riemannextension} shows that $\mathcal{O}^+_U(U) \simeq R^+,$ which gives the proof for the claim.
		
		For general $i\geq 0$ we can compute the group cohomology of both sides by the Koszul complex, which concludes the proof for usual group cohomology (without condensed structure).
		\red{??}
		
		For the condensed group cohomology, by \cite[Proposition B.3]{bosco2023padicproetalecohomologydrinfeld} we can also compute both sides by the Koszul complex. As both $ H^0_{\proet}(\widetilde{U},\mathcal{O}^+_{\widetilde{U}}/p)$ and $R^+_{\infty}/p$ are equipped with discrete topology, we can conclude the lemma by the fact that the funtor $T \to \underline{T}$ from discrete abelian groups to condensed abelian groups is exact annd fully faithful.
	\end{proof}
	
	\begin{lem}
		Theorem \ref{cohintegral} is true. In particular, for any $i\geq 0$, we have $$H^i_{\operatorname{cts}}(\Gamma,H^1_{\proet}(\widetilde{U},\mathcal{O}^+_{\widetilde{U}}/p))\simeq^a0.$$
	\end{lem}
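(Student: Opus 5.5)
The plan is to combine the Cartan--Leray spectral sequence in its condensed form,
$$E_2^{i,j}=H^i_{\cond}\bigl(\Gamma,H^j_{\proet}(\widetilde U,\underline{\mathcal O^+_{\widetilde U}}/p)\bigr)\Rightarrow H^{i+j}_{\proet}(U,\underline{\mathcal O^+_U}/p),$$
with the fact that only the rows $j=0,1$ survive, since $H^j_{\proet}(\widetilde U,\mathcal O^+/p)\simeq^a 0$ for $j\geq 2$ by the preceding lemma. The spectral sequence therefore degenerates into the long exact sequence recorded before the statement. Theorem \ref{cohintegral} asserts $H^i_{\cts}(\Gamma,R^+_\infty/p)\simeq^a H^i_{\proet}(U,\mathcal O^+_U/p)$. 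The row $j=0$ is handled by the previous lemma (equivalently the Corollary after Proposition \ref{almostperf}): it identifies $E_2^{i,0}(*)$ almost with $H^i_{\cts}(\Gamma,R^+_\infty/p)$. It therefore suffices to prove $E_2^{i,1}\simeq^a 0$ for all $i$. Once that holds, the edge maps $E_2^{i,0}\to H^i_{\proet}(U,\underline{\mathcal O^+_U}/p)$ are almost isomorphisms, and evaluating at $*$ via \cite[Remark 2.12, Proposition B.2]{bosco2023padicproetalecohomologydrinfeld} gives both statements of the lemma.

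I would avoid attacking $E_2^{i,1}$ directly, because $H^1_{\proet}(\widetilde U,\mathcal O^+/p)\simeq^a R^1\lim_n S^+_{n,\infty}/p$ is not almost zero (Proposition \ref{failperfectoid}). Instead I would compute $H^i_{\proet}(U,\mathcal O^+/p)$ independently and compare. Two inputs are available:

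\begin{enumerate}
\item Theorem \ref{mainlog}, applied to $X$ with the log structure from $D$ (after the reductions of Subsection \ref{descent}), gives $H^i_{\proet}(U,\mathcal O^+/p)\simeq^a H^i_{\ket}(X,\mathcal O^+/p)$.
\item The Cartan--Leray computation for the Kummer cover $\widetilde X\to X$ gives $H^i_{\ket}(X,\mathcal O^+/p)\simeq^a H^i_{\cts}(\Gamma,R^+_\infty/p)$.
\end{enumerate}

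Together these show that the composite $E_2^{i,0}(*)\to H^i_{\proet}(U,\mathcal O^+/p)$ is an almost isomorphism for every $i$. This uses compatibility of the edge map with the restriction $\widetilde X\to \widetilde U$, which I would check by naturality of Cartan--Leray in the cover.

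Feeding this into the long exact sequence
$$\cdots\to E_2^{i,0}\to H^i\to E_2^{i-1,1}\to E_2^{i+1,0}\to H^{i+1}\to\cdots,$$
almost bijectivity of every $E_2^{i,0}\to H^i$ forces every map $H^i\to E_2^{i-1,1}$ to be almost zero and every map $E_2^{i-1,1}\to E_2^{i+1,0}$ to be almost injective. Moreover, $E_2^{i+1,0}\to H^{i+1}$ is almost injective, so its kernel, which is the image of $E_2^{i-1,1}$, is almost zero. Hence $E_2^{i-1,1}\simeq^a 0$ for all $i$, which is the ``in particular'' statement, and the main statement follows.

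The main obstacle is making this argument honest at the condensed level. The condensed objects $E_2^{i,1}$ must be controlled through their underlying groups. Evaluation at $*$ is exact on condensed abelian groups, so the exact sequence above remains exact after applying $(*)$, and this only yields $E_2^{i,1}(*)\simeq^a 0$. To conclude $E_2^{i,1}\simeq^a 0$ itself, I would run the same argument on $S$-valued points for extremally disconnected $S$. Concretely, this means replacing the coefficient ring by $C(S,\mathcal O_K)$-twisted versions and using that Theorem \ref{mainlog} and the Koszul computation are compatible with $\underline{\mathcal O^+}/p\otimes C(S,\F_p)$. I would also need to check the edge-map compatibility carefully.
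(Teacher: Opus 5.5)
Your argument is correct, but for the main assertion it takes a different route from the paper.

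\textbf{The paper's route.} The paper does not invoke Theorem \ref{mainlog}. It commutes the condensed Cartan--Leray sequence with $R\lim_n$ to write $\rg_{\proet}(U,\mathcal O^+_U/p)\simeq^a R\lim_n \rg_{\cond}(\Gamma,S^+_{n,\infty}/p)$. It then shows that each pro-system $\{H^i_{\cond}(\Gamma,S^+_{n,\infty}/p)\}_n$ is almost pro-isomorphic to the constant value $H^i_{\cond}(\Gamma,R^+_\infty/p)$: first by the weight decomposition for $X=\mathbb E$, then by an almost base change along $X\to\mathbb E$. The theorem is read off from the Milnor sequences. The vanishing of the $j=1$ row is then obtained from the two-row spectral sequence and the degree-zero comparison, exactly as in your last step (compare the Corollary after Proposition \ref{almostperf}).

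\textbf{Your route and what each buys.} You get the theorem at once from Theorem \ref{mainlog} together with the Kummer Cartan--Leray isomorphism for $\widetilde X\to X$. This is shorter. It also inherits the passage to a general chart from the tubular-neighbourhood and hypercover descent of Section \ref{proofmain}, rather than from the chart-level base change, which the paper only sketches. On the other hand, it presupposes Theorem \ref{mainlog}, so it could not serve as an input to it. It is also not natively condensed: as you say, it directly yields only $E_2^{i,1}(*)\simeq^a 0$. The paper's computation, by contrast, identifies the condensed group $H^i_{\proet}(U,\underline{\mathcal O^+_U}/p)$ for free.

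\textbf{Your condensed upgrade.} It does go through, and the precise input is the pro-system form of Theorem \ref{mainlog}, not merely its $\lim$ and $R^1\lim$ consequences. For $S$ extremally disconnected, one has $\rg(U\times S,\mathcal O^+/p)\simeq R\lim_n\bigl(C(S,\Z)\otimes\rg(U_n,\mathcal O^+/p)\bigr)$, because the $U_n$ are quasi-compact and quasi-separated. Tensoring the almost constant pro-systems levelwise with the free module $C(S,\Z)$ keeps them almost constant, so $H^i_{\proet}(U,\underline{\mathcal O^+_U}/p)\simeq^a\underline{H^i_{\ket}(X,\mathcal O^+/p)}$. Combined with the condensed form of the degree-zero comparison and naturality, the edge maps are almost isomorphisms of condensed groups. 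Your exact-sequence argument then gives $E_2^{i,1}\simeq^a 0$ as condensed objects. At that point the two routes meet, since this is exactly the pro-constancy the paper's proof works with.
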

	
	\begin{proof}
		
		We have
		\begin{align*}
			\rg_{\proet}(U,\mathcal{O}^+_U/p) &\simeq^a \rg_{\cond}(\Gamma,\rg_{\proet}(\widetilde{U}, \mathcal{O}^+_{\widetilde{U}}/p))\\
			&\simeq^a \rg_{\cond}(\Gamma,R\lim_n \rg_{\proet}(\widetilde{V}_n, \mathcal{O}^+_{\widetilde{V}_n}/p)) \\
			&\simeq^a R\lim_n\rg_{\cond}(\Gamma,\rg_{\proet}(\widetilde{V}_n, \mathcal{O}^+_{\widetilde{V}_n}/p)) \\
			&\simeq^a R\lim_n\rg_{\cond}(\Gamma,S_{n,\infty}^+/p),
		\end{align*}  
		where the first quasi-isomorphism is the Cartan–Leray spectral sequence (\cite[Proposition 4.12]{bosco2023padicproetalecohomologydrinfeld}), and the last quasi-isomorphism follows from \cite[Proposition 4.7]{bosco2023padicproetalecohomologydrinfeld}. As the category of $\kappa$-small condensed abelian group is a Grothendieck category satisfying (AB4*), $R^i\lim=0$ for $i \geq 2.$ Therefore for $i \geq 0$ we have the almost short exact sequence $$0 \to R^1\lim_n H^{i-1}_{\cond}(\Gamma,S_{n,\infty}^+/p) \to H^i_{\proet}(U,\mathcal{O}^+_U/p) \to \lim_n H^i_{\cond}(\Gamma,S_{n,\infty}^+/p) \to 0.$$
		
		To prove the theorem, it suffices to shows that $$\lim_n H^i_{\cond}(\Gamma,S_{n,\infty}^+/p) \simeq^a H^i_{\cond}(\Gamma,R_{\infty}^+/p),$$ and $$R^1\lim_n H^{i-1}_{\cond}(\Gamma,S_{n,\infty}^+/p)=0$$ for all $i \geq 0.$
		
		We first consider the case for $X=\mathbb{E}.$ In this case, we rewrite $U,V_n,S_n^+,...$ as $U_{\mathbb{E}},V_{n,\mathbb{E}},S_{n,\mathbb{E}}^+...$
		
		The discussion above shows that for any $i\geq 0,$ the pro-system of condensed $K^+\langle P_{\Q_{\geq 0}}\rangle/p$-modules $\{H^i_{\cond}(\Gamma,S_{n,\mathbb{E},\infty}^+/p)\}_{n\in \N}$  is isomorphism to the constant pro-system $\{H^i_{\cond}(\Gamma,K^+\langle P_{\Q_{\geq 0}}\rangle/p)\}_{n\in \N}$, which prove the claims.
		
		We now consider a general $X$ that there exists a strictly \'etale morphism $X \to \mathbb E$, which can be written as a composite of rational embedding and finite étale maps. According to \cite[Lemma 4.12]{scholze2013p} and \cite[Lemma 6.1.11]{dllz2023logadic}, we have almost isomorphisms $$S_{n,\infty}^+/p \simeq^a S_{n,\mathbb{E},\infty}^+/p \otimes_{K^+\langle P_{\Q_{\geq 0}}\rangle/p}R_{\infty}^+/p, \quad R_{\infty}^+/p$$ The computation of \cite[Lemma 5.5]{scholze2013p} shows that $$.$$ Therefore, for any $i\geq 0,$ we have almost isomorphisms of pro-system of condensed $R_{\infty}^+/p$-modules 
		\begin{align*}
			\{H^i_{\cond}(\Gamma,S_{n,\infty}^+/p)\}_{n\in \N} &\simeq^a \{H^i_{\cond}(\Gamma,S_{n,\mathbb{E},\infty}^+/p)\otimes_{K^+\langle P\rangle/p}R^+/p\}_{n\in \N}\\
			&\simeq^a \{H^i_{\cond}(\Gamma,K^+\langle P\rangle/p)\otimes_{K^+\langle P_{\Q_{\geq 0}}\rangle/p}R_{\infty}^+/p\}_{n\in \N} \\
			&\simeq^a \{H^i_{\cond}(\Gamma,R_{\infty}^+/p)\}_{n\in \N}, \\
		\end{align*}  
		is isomorphism to the constant pro-system $\{H^i_{\cond}(\Gamma,R_{\infty}^+/p)\}$, which concludes the claims and proves the theorem.

	\end{proof}

	\fi

	\bibliography{ref}
	
	\bibliographystyle{alpha}
	
\end{document}